\newtheorem{theorem}{Theorem}[section]
\newtheorem{proposition}[theorem]{Proposition}
\newtheorem{lemma}[theorem]{Lemma}
\newtheorem{remark}[theorem]{Remark}
\newtheorem{assumption}[theorem]{Assumption}
\newtheorem{definition}[theorem]{Definition}
\newcommand{\R}{\mathbb R}
\newcommand{\N}{\mathbb N}
\newcommand{\norm}[1]{\left\| #1 \right\|}
\newcommand{\abs}[1]{\lvert#1\rvert}
\newcommand{\set}[1]{\left\{ #1\right\}}
\numberwithin{equation}{section}
\numberwithin{algorithm}{section}
\begin{document}
\title{Discontinuous Galerkin methods for the Laplace-Beltrami operator on point clouds\footnote{Emails: guozhi.dong@csu.edu.cn; hailong.guo@unimelb.edu.au; zqshi@tsinghua.edu.cn}}
\author[1]{Guozhi Dong}
\author[2]{Hailong Guo}
\author[3,4]{Zuoqiang Shi}
{\tiny
\affil[1]{School of Mathematics and Statistics, HNP-LAMA, Central South University, Changsha 410083, China}
\affil[2]{School of Mathematics and Statistics, The University of Melbourne, Parkville, VIC, 3010, Australia}
\affil[3]{Yau Mathematical Sciences Center, Tsinghua University, Beijing, 100084, China}
\affil[4]{Yanqi Lake Beijing Institute of Mathematical Sciences and Applications, Beijing, 101408, China}}
\date{ }
\maketitle
\begin{abstract}
This paper is dedicated to the development of numerical analysis for high-order methods solving partial differential equations on scattered point clouds. We build a novel geometric error analysis framework by estimating the error in the approximation of the Riemann metric tensor. The innovative framework serves as a fundamental tool for analyzing discontinuous Galerkin methods applied to the Laplace-Beltrami operator on possibly discontinuous geometry. We provide numerical examples on patchy surfaces reconstructed from point clouds to support our theoretical findings.
	\vskip .3cm
	{\bf AMS subject classifications.} \ {65D18, 65N12, 65N25, 65N30, 68U05}
	\vskip .3cm
	
	{\bf Keywords.} \ {Geometric approximation error, discontinuous Galerkin methods, point cloud, Laplace-Beltrami operator, eigenvalue problem}
\end{abstract}

	
\section{Introduction}
Numerical solutions of partial differential equations (PDEs) on (unknown) manifolds or surfaces have garnered substantial attention in recent years,  with particular applications including surface evolution and geometric flows.
Since Dziuk \cite{Dziuk1988} introduced the surface finite element method (SFEM), substantial efforts and progress have been made
in this direction. Notable recent contributions and reviews can be found, for example, in \cite{DecDziEll05,DziukElliott2013,KovLiLub19,BONITO20201,HuLi22,BaiLi23,DuanLi24}.
Numerical methods for PDEs on manifolds typically encompass two levels of discretization (or approximation): one is the discretization of the geometry and its differential structures;  the other one is the discretization of the functions defined on the geometry.  The greater part of the literature on SFEM seems to have been based on the fact that the underlying geometry (manifolds) are surfaces represented by level set functions (e.g., sign distance functions). The geometry approximations are usually limited to continuous piecewise polynomial patches which interpolate underlying smooth manifolds. Using this idea, many of the well-known numerical methods in planar domains have been generalized to curved spaces or even evolving manifolds.
Exemplary works  include SFEM, surface finite volume method, surface Crouzeix-Raviart element and so on \cite{DziukElliott07,Demlow2009,DuJu2005,Guo20,OlshanskiiReuskenGrande2009,GraLehReu18}. 
Isogeometric analysis (IGA) is another way of integrating geometric information with numerical solution process for PDEs, which leverages Non-Uniform-Rational-Bsplines for both geometry representation and function approximation to enhance the solution accuracy and efficiency, particularly popular in computer aided design. There are also works to consider IGA for solving PDEs on surfaces, see for example \cite{BarDedQua15,PanRabYan21}.
Along these lines, discontinuous Galerkin (DG) methods with respect to continuously interpolated piecewise polynomial surfaces have been investigated as well \cite{DedMadSti13,AntDedMadStaStiVer15,DednerMadhavanStinner2016}.
	The vast majority of the geometric error estimations in the existing works are based on the fundamental results of \cite{Dziuk1988} for linear SFEM and \cite{Demlow2009} for higher-order ones, which are for interpolating type discretizations.
	In other words, the geometric errors are residuals of local polynomial interpolations of nodal points on the exact manifolds.
	In addition, most of the existing works calculate the surface gradient and other differential operators based on the explicit tangential projection from the ambient space onto the tangent spaces of the manifolds.
That means explicit embedding (e.g., through level set functions) of the manifolds to their ambient Euclidean spaces are assumed to be known.
	These assumptions are not necessarily satisfied for problems we study in this paper, i.e., manifolds are represented by point clouds.

 Due to the revolutionary development of digital technologies, point clouds have become a standard and convenient method for sampling surfaces across numerous real-world applications. For example, solving the Laplace-Beltrami eigenvalue problem on surfaces aids in discrete surface registration \cite{LaiZha17}. However, owing to the discrete nature of point clouds, the existing numerical methods for solving PDEs on point clouds are typically meshfree types, to name just a few  \cite{LiaZha13,LiShi16}. The former relies on local polynomial surface approximation, offering a direct and implementable approach but posing challenges for theoretical analysis. The latter, the point integral method proposed in \cite{LiShi16}, has demonstrated convergence, albeit with relatively low convergence rates.

In this paper, we aim to develop high-order numerical methods to solve PDEs on point clouds with solid theoretical analysis. To do this, a patchwise manifold needs to be reconstructed, which makes high-order numerical methods on point clouds possible. To achieve a high-order numerical method, we first need a high-order approximation of the underlying surface. Since the surface is given by unstructured point clouds, it is difficult to obtain a global high-order representation. However, it is relatively easy to compute local high-order approximations to the surface by polynomial fitting. However, in general, such local approximations may be inconsistent with each other, i.e., the local patches may be discontinuous. Based on the discontinuous geometric representation, the discontinuous Galerkin (DG) method becomes a natural choice. Another difficulty lies in the theoretical error analysis. The geometric assumptions from existing results are violated in this setting. Thus, it calls for new ideas in both algorithm designing and numerical analysis for such problems.

In recent works \cite{DonGuo20,DonGuoGuo24}, the idea of using the Riemannian metric tensor for post-processing and analyzing geometric errors has been employed in gradient recovery (post-processing of numerical solutions) and computation of tangential vector fields on manifolds using linear SFEM, which has proven successful. This motivates us to further develop the ideas presented in \cite{DonGuo20,DonGuoGuo24} for higher-order SFEMs and DG methods, as well as for {\it a priori} error analysis.
In this paper, we consider the Laplace-Beltrami equation and its corresponding eigenvalue problem as model equations. Different from the methods widely adopted in the literature, we take an intrinsic viewpoint. By directly estimating the approximation error of Riemannian metric tensors, the error analysis of numerical solutions, particularly for geometric errors, becomes more transparent in comparison with existing results.
Instead of focusing on an abstract unified DG framework as presented in \cite{AntDedMadStaStiVer15, ABCM2001}, we concentrate on the interior penalty DG method \cite{Arn82, DiEr2012, Rivi2008,HeWa2008} for solving these representative problems. It is worth noting that convergence rates of DG methods for eigenvalue problems on planar domains have been studied, for example in \cite{AntBufPer06}, and higher-order SFEMs for eigenvalue problems were proposed and analyzed in \cite{BonDemOwe18}.
Particularly observed in \cite{BonDemOwe18} was a non-synchronization phenomenon between geometric error and function approximation error in terms of eigenvalue convergence rates. Whether the error bounds presented in \cite{BonDemOwe18} are sharp or not remains an open question.
As a side product, this paper will overcome these restrictions and address these open issues.

The primary contributions are an algorithmic framework employing DG methods for solving elliptical PDEs on point clouds and a novel  geometric error analysis method. The approach involves approximating the Riemannian metric tensor of the underlying manifold, differing from existing methods mostly based on the works in \cite{Dziuk1988,Demlow2009}, which rely on the tangential projection for defining differential operators on surfaces. In our work, these differential operators are represented using local geometric parameterizations and their induced metric tensors. The tangential projection and the explicit embedding maps of the manifolds, which are required in existing works in the literature, e.g., \cite{Dziuk1988,Demlow2009}, are not compulsory to know using our new framework. Moreover, it even does not require the surface patches to be continuous, therefore is capable of dealing with more general geometry and allows more flexibility in geometric approximation and numerical methods. This also serves as a starting point for us to study DG methods on patchwise manifolds. 
More explicitly, the error estimate of the metric tensor is established in Theorem \ref{thm:geo_error}, where a discrepancy phenomenon between the Jacobian approximation and the metric approximation is reported. In Remark \ref{rem:discrepancy}, an example is provided to justify that our geometric estimates are crucial. These results guide the compatibility of the geometric discretization with the function approximation to have optimal convergence rates in numerical methods. Using the intrinsic geometric representation, we can design flexible numerical methods and conduct novel error analysis for problems on curved domains. In particular, using the new framework, we prove the convergence rates for high-order DG methods without global continuity of the surface patches in Theorem \ref{thm:convergence}. 

In the numerical analysis of the eigenvalue problem, our approach is also different from both the two works in \cite{AntBufPer06,BonDemOwe18}. We show that eigenvalues are invariant under geometric transformations. Then we use the Babu\v{s}ka-Osborn \cite{BabOsb91} framework and apply it to DG methods on discretized manifolds in Theorem \ref{thm:eigen_result}, which gives the same theoretical rates as in \cite{BonDemOwe18} for high-order SFEMs. Although we focus only on the interior penalty DG (IPDG) method in the analysis and the numerical examples, our framework and ideas are applicable to other DG methods and SFEMs as well. Last but not least, using a simple example (see Remark \ref{rem:eigen_sharp}) we argue that the theoretical rates proved in the paper for the convergence of the eigenvalue (therefore also the results in \cite{BonDemOwe18}) is theoretically optimal.

The rest of the paper is organized as follows:
Section \ref{sec:geometry} presents a general description of our geometric setting, including an algorithm for reconstructing patchwise manifolds from point clouds and the geometric error analysis.
Section \ref{sec:Laplace} analyzes the convergence rates of the numerical solution of the IPDG method for the Laplace-Beltrami equation and its eigenvalue problem on the patchwise manifolds.
Finally, the paper is concluded after numerical results and discussions in Section \ref{sec:numerical}.
Finally, in Section \ref{sec:con}, we draw conclusions. A few technical details are deferred in Appendices \ref{app:approx} and \ref{app:DG_bilinear}.

\section{Patches reconstructed from point clouds and their error analysis}
	\label{sec:geometry}

We consider a smooth (sub-)manifold denoted by $\Gamma$ associated with a Riemannian metric tensor $g$. Please note that the term 'smooth' will be precisely defined later. To provide specificity in our discussion, we choose two-dimensional surfaces $\Gamma \subset \mathbb{R}^3$ as examples in this paper.
Given a point cloud $\Sigma=\{\xi_i\}_{i\in I}$ sampled from $\Gamma$, in order to apply DG methods and their analysis for solving PDEs, we need to first reconstruct a patchwise manifold. This process involves two steps: (i) establishing a reference mesh $\Omega_{h}$ and (ii) performing the patch reconstruction.
In the subsequent sections, we will first provide a general description of the patchwise manifolds required for this work. Following that, we will introduce an algorithm for reconstructing these types of patches from point clouds. Lastly,  we will present the corresponding geometric error analysis.

	\subsection{Geometry and its patchwise approximation}
Let $\Gamma_{h}:=\bigcup_{j\in J} \Gamma_{h}^j$ represent a subdivision of $\Gamma$, where $h$ denotes a parameter determined by the subdivision's size, and $J\subset \N$ stands for an index set. Each $\Gamma_h^j\subset \Gamma$ is homeomorphic to an open subset in $\R^2$. Specifically, we assume $\Gamma = \bigcup_{j\in J} \overline{\Gamma_{h}^j}$ and $\Gamma_h^{j_1}\cap \Gamma_h^{j_2}=\emptyset$ for all index pairs $j_1\neq j_2\in J$.
In practical applications, the explicit representation of the manifold $\Gamma$ might be unavailable. Thus, $\Gamma_{h}$ remains unknown, and only approximations or discrete representations are accessible. Consequently, we consider a collection of patches $\hat{\Gamma}_h=\bigcup_{j\in J}\hat{\Gamma}^{j}_h$, where each $\hat{\Gamma}^{j}_h$ approximates $\Gamma_h^j$ for every $j\in J$. We assume that every $\hat{\Gamma}^{j}_h$ is also homeomorphic to an open subset in $\R^2$, and there exists a bijective map between $\hat{\Gamma}^{j}_h$ and $\Gamma^{j}_h$. Particularly, we consider the following $k$-th order patchwise manifold:

\begin{definition}[$k$-th order patchwise manifold]
Let $\hat{\Gamma}^{k}_h=\bigcup_{j\in J} \hat{\Gamma}^{k,j}_h$ and $\Gamma_{h}=\bigcup_{j\in J} \Gamma_{h}^j$ be collections of patches. We define $\hat{\Gamma}^{k}_h$ as a $k$-th order patchwise manifold approximating $\Gamma_{h}$ if a family of bijective maps exists between $\Gamma_{h}^j$ and $\hat{\Gamma}^{k,j}_h$ for every $j\in J$. Particularly, these maps satisfy the following estimate:
\begin{equation}\label{eq:patch_dist}
\norm{g-g_h^k}_{L^\infty}\leq C h^{k+1},
\end{equation}
where $g$ and $g_h^k$ denote the Riemannian metric tensors associated with the patches in $\Gamma_h$ and $\hat{\Gamma}^{k}_h$, respectively. The $L^\infty$ norm is evaluated on the pullback of the metric tensors to every common homeomorphic open set for the corresponding patch pairs $\hat{\Gamma}^{j}_h$ and $\Gamma^{j}_h$.
\end{definition}

Throughout this article, the letter $C$ or $c$, with or without subscripts, denotes a generic constant which is independent of $h$ and may not be the same at each occurrence. For convenience, we represent $x\leq Cy$ (or $x\geq Cy$) as $x\lesssim y$ (or $x\gtrsim y$).
Later in this section, we shall provide the detail about how to construct $\hat{\Gamma}^{k}_h$ from point clouds.
It is important to note that for patchwise manifolds $\{\hat{\Gamma}^{k}_h\}$, global continuity is not always necessary. An example of such manifolds includes local polynomial patches. Algorithm \ref{alg:point_estimate} illustrates how to construct them from a given point cloud representation of $\Gamma$. 
To achieve this, we introduce a parameter domain denoted by $\Omega_{h}=\bigcup_{j\in J} \Omega_h^j$. We shall abuse a bit of notation in the paper, as later $\Omega_h^j$ may denote both a planar domain (e.g., a triangle) and also the corresponding hyperplane to which the parameter domain belongs.
Subsequently, we establish a patchwise mapping $\pi:\Omega_h \to \Gamma_h$. One approach to defining this map is by utilizing normal vectors on $\Gamma_h$ as in \eqref{eq:projection}. 
On each $\Omega_{h}^j$, we construct a local coordinate system where the origin lies within $\Omega_h^j$, and $\pi$ serves as a mapping from $\R^2$ to $\R^3$. This mapping $\pi$ induces a metric tensor $g$ of $\Gamma$ through the following relation:
\begin{equation}\label{eq:metric_tensor}
g\circ \pi =(\partial \pi)^\top \partial \pi .
\end{equation}
It is important to highlight that $\pi$ isn't necessarily defined using the global coordinates in $\R^3$, but alternatively defined on each local patch $\Omega_h^j$ using the local coordinates within it. Here, $\partial$ denotes the Jacobian, for instance, $\partial \pi\in \R^{3\times 2}$ signifies the Jacobian of the geometry map evaluated locally based on the local coordinates on $\Omega_{h}^j$.
Remarkably, \eqref{eq:metric_tensor} plays a pivotal role in the error analysis presented in this paper.

A bijective map $\pi^k_h:\Omega_h \to \hat{\Gamma}^{k}_h$ can be established analogously to $\pi$. Consequently, we obtain the bijective map $\pi^k_h\circ \pi^{-1}:\Gamma_h \to \hat{\Gamma}^k_h$.
Using the parametrization map $\pi$ (or $\pi^k_h$), the gradient of a scalar function on manifolds can be computed as follows:
\begin{equation}\label{eq:gradient}
(\nabla_g v)\circ \pi= \partial \pi (g^{-1}\circ\pi) \nabla \bar{v} \quad \text{ or } \quad 	(\nabla_{g^k_h} v^k_h)\circ \pi^k_h= \partial \pi^k_h ((g^k_h)^{-1}\circ\pi^k_h) \nabla \bar{v}^k_h. 
\end{equation}
Here, $\bar{v}=v\circ \pi$ (or $\bar{v}^k_h=v^k_h\circ \pi^k_h$), and $\nabla$ represents the gradient operator on the planar parametric domain $\Omega_h$. Note that this expression of surface gradient is different from the one used in many references for surface finite element methods like \cite{Demlow2009, Dziuk1988}. This novel approach enables the development of a new numerical analysis framework to investigate PDEs on surfaces or (sub-)manifolds more broadly.

One should be aware that the choice of $\Omega_{h}$ is flexible, and the mapping $\pi:\Omega_h\to \Gamma_h$ is non-unique. Though there are many different choices of $\pi$, the metric tensor $g$ is invariant. 
Without loss of generality, we consider in this paper $\Omega_{h}$ consisting of triangle faces of some polyhedron close to $\Gamma_{h}$, and the following particular widely used mapping $\pi:\Omega_{h} \to \Gamma_h$ in \eqref{eq:projection}: 
\begin{equation}\label{eq:projection}
\pi(x)=x-d(x)\nu(x)\quad \text{ for every }\; x\in \Omega_h \;\text{ in the local coordinates on } \Omega_h,
\end{equation}
where $\nu(x)=\nu(\pi(x))$ is the  unit outward normal vector at the point $\pi(x)$ on $\Gamma$, and $d(x)$ is the sign distance function of $\Gamma$.
Note that the expression in \eqref{eq:projection} is coordinate independent. We always consider a patchwise local coordinate system where the origin is on the face of $\Omega_{h}$.
Assuming $\Omega_h$ to be close enough to $\Gamma$, then the mapping $\pi$ is bijective between $\Omega_h^j$ and $\Gamma_h^j \subset \Gamma$. This can be argued as follows:
The Jacobian of $\pi$ is $\partial \pi(x)=P_\tau -d(x) H$. Here $H$ is the second fundamental form, whose eigenvalues are the principal curvatures $\kappa$ of $\Gamma$. $P_\tau$ is the tangential projection and it satisfies $P_\tau \partial \pi(x)=\partial \pi(x)=\partial \pi(x) P_\tau $. 
Let $\kappa$ be the scalar curvature of $\Gamma$. It is easy to see that  the rank of $\partial \pi$ is the same as the dimension of $\Gamma$ provided that $d\kappa<1$. Thus the pseudo-inverse of $\partial \pi$ is bounded, and we have $\pi:\Omega_h \to \Gamma_h$ bijective. On the other hand $d\kappa<1$ requires the distance of $\Omega_h$ to $\Gamma_h$ to be inversely proportional to the curvature.

Eventually, we will develop DG methods for solving PDEs on $\hat{\Gamma}^{k}_h$ instead of $\Gamma_{h}$ with a novel error analysis based on local parametrization.
Although we have introduced the concept of \emph{$k$-th order patchwise manifolds}, it remains unclear how to obtain such patchwise manifolds from point clouds and under which conditions to have the approximation property in \eqref{eq:patch_dist}.
In the rest of this section, we shall address this problem, which is important for designing optimally convergent high-order DG methods on point clouds.

	\subsection{Reference mesh on point clouds}
To get the reference mesh, we reconstruct an initial triangular mesh
	$\Omega_H$. For this initial mesh, we only require that it is sufficiently close to the surface such that the map $\pi$ given in \eqref{eq:projection} is well defined. This requirement is very mild. Generating the initial mesh can be easily achieved from a point cloud using surface reconstruction methods such as screened Poisson reconstruction (SPR) \cite{SPR}, Gauss reconstruction \cite{GR}, and others. Figure \ref{fig:mesh-ref} illustrates an example of the initial triangular mesh constructed using SPR.
	\begin{figure}[!h]
		\centering
		{\includegraphics[width=0.8\textwidth]{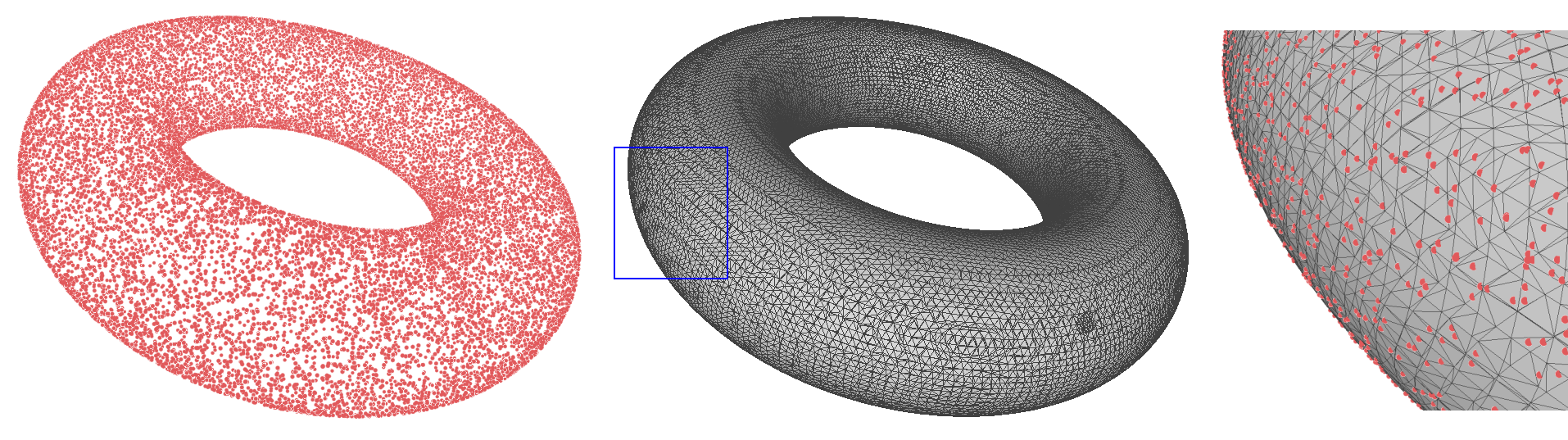}}
		\caption{(Color online) Initial meshes constructed from point cloud. Left: Point cloud; Middle: Reference mesh reconstructed by SPR \cite{SPR}; Right: Zoom-in of the point cloud and triangular meshes.}
		\label{fig:mesh-ref}
	\end{figure}

 If the initial mesh is too coarse to be used in the computation, we can refine the initial mesh to a satisfactory one. Each triangle in the mesh is subdivided into four triangles by connecting its edge centers until the diameter of the largest triangle reaches the filling distance $h_s$ outlined in Definition \ref{def:fill_dis}. 
	\begin{definition}\label{def:fill_dis}
		The filling distance $h_s$ of the point cloud $\Sigma$ is 
		\begin{equation}\label{eq:fill_dis}
		h_s :=\sup_{\psi\in \Gamma} \min_{\xi\in \Sigma } \abs{ \psi -\xi}.
		\end{equation}
	\end{definition}

 Note that though the definition of filling distance involves the knowledge of $\Gamma$, we only need a magnitude estimate of it for the convergence analysis. This could be achievable without knowing $\Gamma$; instead, we may use the largest distance of neighbouring points in $\Sigma$, which is close to $2h_s$, to have such a bound.

The refined mesh is denoted as $\tilde{\Omega}_h$. Subsequently, for each vertex of $\tilde{\Omega}_h$, we identify its $k$-nearest neighbors ($k\ge 3$) in the point cloud $\Sigma$ to fit a hyperplane. The vertex is then projected onto the hyperplane to obtain a new vertex. Finally, connecting all the new vertices yields the reference mesh $\Omega_h$, maintaining the same connections as those in $\tilde{\Omega}_h$. 
It can be easily verified that $\Omega_h$ possesses the property:
\begin{equation}\label{eq:dist_ref}
\text{dist}(\Omega_h, \Gamma_h)\sim h_s^2,
\end{equation}
This property will be utilized in the error analysis to achieve an optimal convergence rate.
Throughout the following discussion, we assume that the filling distance $h_s$ in \eqref{eq:fill_dis} is sufficiently small, and the largest diameter of the triangles on $\Omega_h$ satisfies $c_a h_s \leq h\leq  c_b h_s$. Here, $c_a$ and $c_b$ are constants, controlling the magnitude of $h$ to ensure that the polynomial reconstruction in Algorithm \ref{alg:point_estimate} is well-posed, meaning that the polynomial function is uniquely determined from the scattered points there.
	\subsection{Patch reconstruction from point cloud}
	\label{subsec:geo_algo}

Using the previously obtained reference mesh as a foundation, we are able to create local patch approximations by Algorithm \ref{alg:point_estimate}.
	\renewcommand{\thealgorithm}{\arabic{algorithm}}
	\setcounter{algorithm}{0}
	\begin{algorithm} [!h]
		\caption{Patch reconstruction from point cloud}
		\textbf{Input}: The given point cloud $\Sigma$, and the reference mesh $\Omega_{h}$.
		
		Let $J$ be the total number of small triangles on $\Omega_{h}$. For every $j\in J$, implement the following steps in order.
		\begin{itemize}
			\item[(1)] On each reference element $\Omega_{h}^j$, choose the barycenter $c^j$, pick a set of $l$ nodal points, and select the closest points in $\Sigma$ to each one of the $l$ nodal point on $\Omega_{h}^j$, denoted by $\xi_1^j,\; \xi_2^j,\; \ldots,\; \xi_L^j$ for some $L\geq l$. 
			\item[(2)] Choose a local coordinate system where the hyperplane of $\Omega_{h}^j$ is the transverse plane, and let $c^j$ be the origin. Then we change the coordinates of $\set{\xi_i^j}_{i=1}^l$ according to the new local system. In the local coordinates, $\set{\xi_i^j}_{i=1}^l$ is represented as $(s_i^j,v_i^j)$ for $s_i^j\in \Omega_{h}^j$. 
			\item[(3)] Then approximate (interpolate) the points using a $k^{th}$ order polynomial function graph defined on the hyperplane of $\Omega_{h}^j$. That is to find $p_h^{k,j}$ such that
			\begin{equation} \label{eq:local_patch_appro}
			p_h^{k,j}=\underset{p}{\operatorname{argmin}} \frac{1}{L}\sum_{i=1}^L \abs{p(s_i^j) - v_i^j}^2 \text{  over } p\in \mathbb{P}^k(\Omega_{h}^j). 
			\end{equation}
			
			\item[(4)] Select some interpolating points $(x_k^j)_k\subset \Omega_{h}^j$ in the parameter domain. Typically, those points $x_k^j$ are chosen according to the polynomial degree.
			\item[(5)] Compute $\psi_k^j=\pi_h^k(x_k^j)$ for those selected points $x_k^j\in \Omega_{h}^j$ using the iterations described in \eqref{eq:Newton_ite}.
			\item[(6)] Interpolate $\psi_k^j$ component-wise to get the corresponding polynomial patch on $\Omega_{h}^j$ denoted as $\hat{\Gamma}_h^{k,j}$.
		\end{itemize}
	\textbf{Output}:  The surface patches $(\hat{\Gamma}_h^{k,j})_{j\in J}$ which approximates the underlying smooth surface $\Gamma$.
		\label{alg:point_estimate}
	\end{algorithm}
\begin{remark}
In the first step of Algorithm \ref{alg:point_estimate}, the nodal points in $\Omega_h^j$ are chosen as the interpolation points of $P_k$ element \cite{Ci2002}. The number of nodal points $l$ then depends on the degree of the polynomial function $p$ in \eqref{eq:local_patch_appro}. 
 Problem \eqref{eq:local_patch_appro} is a least square problem of an over-determined system and can be surely well-conditioned.  Note that $L\geq l$ is due to the reason that one can pick up multiple closest points in $\Sigma$ for each nodal points in $\Omega_h^j$.
  \end{remark}	
	

We point out that $\hat{\Gamma}_h^{k,j}$ is a piecewise polynomial patch, but not necessarily interpolating $\Gamma_h$.	
Now we provide the details on computing $\psi^j_k$ in Step (5) Algorithm \ref{alg:point_estimate}. The idea is inspired from the equation in \eqref{eq:projection}. More explicitly, given a point $x\in \Omega_h^j$ over the parametric domain, we compute the point $\psi \in \hat{\Gamma}_h^{k,j}$ such that $\psi(x) - x =- d_h^k(x)\nu_h^{k,j}(x)$, where $\nu_h^{k,j}$ denotes the outward unit normal vector of the local polynomial graph, and $d_h^k$ is the length of the vector $\psi -x$. The point $\psi(x)$ is then analogous to $\pi(x)$.
Let $\psi=(\psi_1,\psi_2,\psi_3)$ be a local polynomial graph, i.e., $\psi_3=p_h^{k,j}(\psi_1,\psi_2)$, and we consider the local coordinates, $x=(x_1,x_2,0)\in \Omega_h^j$. Notice that
$d_h^k(x)=\sqrt{(\psi_1-x_1)^2+(\psi_2-x_2)^2+(\psi_3)^2}$, and $\nu_h^{k,j}(x)=\frac{(-\partial_{1} p_h^{k,j},\; -\partial_{2} p_h^{k,j}\; ,1)^\top }{\sqrt{(\partial_{1} p_h^{k,j})^2 +(\partial_{2} p_h^{k,j})^2+1}}$.

	This gives us the following nonlinear system
	\begin{equation}\label{eq:system1}
	\begin{aligned}
	\psi_3 \partial_{1} p_h^{k,j}(\psi_1,\psi_2)+\psi_1-x_1=0,\\
	\psi_3 \partial_{2} p_h^{k,j}(\psi_1,\psi_2)+\psi_2-x_2=0,\\
	\psi_3-p_h^{k,j}(\psi_1,\psi_2)=0.
	\end{aligned}
	\end{equation}
	We rewrite the above system in a compact form as $M(\psi)=0$, which can then be solved using Newton's method.
	For that, we reduce the above system to a two-variable equation
	\begin{equation}\label{eq:system2}
	\begin{aligned}
	M_1(\psi_1,\psi_2)=p_h^{k,j}(\psi_1,\psi_2) \partial_{1} p_h^{k,j}(\psi_1,\psi_2)+\psi_1-x_1=0,\\
	M_2(\psi_1,\psi_2)=p_h^{k,j}(\psi_1,\psi_2) \partial_{2} p_h^{k,j}(\psi_1,\psi_2)+\psi_2-x_2=0.
	\end{aligned}
	\end{equation}
	This system then gives rise to the following iterations for some initial guess, e.g., $(\psi_{1,0},\psi_{2,0})^\top=(x_1,x_2)^\top$:
	\begin{equation}\label{eq:Newton_ite}
	(\psi_{1,{m +1}},\psi_{2,{m +1}})^\top =(\psi_{1,{m}},\psi_{2,{m}})^\top - (\nabla M^m)^{-1}(M^m_1,M^m_2)^\top; \; \psi_{3,{m+1}}=p_h^{k,j}(\psi_{1,m+1},\psi_{2,m+1}),
	\end{equation}
	where $M^m_i=M_i(\psi_{1,m},\psi_{2,m})$, and 
	\[\nabla M^m =\left( \begin{matrix}
	\partial_1M_1(\psi_{1,m},\psi_{2,m}) & \partial_1 M_2(\psi_{1,m},\psi_{2,m})\\
	\partial_2M_1(\psi_{1,m},\psi_{2,m}) & \partial_2 M_2(\psi_{1,m},\psi_{2,m})
	\end{matrix}\right),\]
	 which is a symmetric matrix. Note that its inverse matrix can be explicitly calculated
	\[ (\nabla M^m)^{-1} = \frac{1}{\det(\nabla M^m) }\left( \begin{matrix}
	\partial_2 M_2(\psi_{1,m},\psi_{2,m})& - \partial_1M_2(\psi_{1,m},\psi_{2,m})\\
	- \partial_2 M_1(\psi_{1,m},\psi_{2,m}) & \partial_1M_1(\psi_{1,m},\psi_{2,m})
	\end{matrix}\right) .\] 
	The Newton iterations are terminated once an expected accuracy level is reached (in our examples, we choose it to be  $10^{-14}$).
	\begin{remark}
		We notice here that Steps $(4)$ and $(5)$ in Algorithm \ref{alg:point_estimate} are important in order to prove Proposition \ref{prop:nodal_accu}, particularly the estimate in \eqref{eq:geo_approx2}.
	\end{remark}

One may notice that a continuous surface could potentially be reconstructed from the above algorithm, where one can add an additional step after Step (5) by averaging the nodal points corresponding to the same edges in the reference mesh. This would unify the nodal points on the common edge, and  then use the same polynomial interpolation as in Step (6)  of Algorithm \ref{alg:point_estimate} to achieve a continuous patchwise reconstruction. The same error bounds will hold for this continuous approximation, though proving it will require a bit more work. Since our analysis can cover both discontinuous and continuous cases, and the former appears to be more general in the error analysis, we will focus on the discontinuous case in this paper, as high-order numerical methods for PDEs on continuous surfaces have already been discussed in the literature, e.g., in \cite{Demlow2009}.

We now demonstrate the approximation property for the nodal points $\{\psi^i\}_{i\in\N}$ reconstructed from Algorithm \ref{alg:point_estimate}.
Considering the equation in \eqref{eq:system1}, the partial differential operators might seem to reduce the approximation accuracy of the polynomial functions $p_h^j$ after the Newton iterations. However, in the following, we show that surprisingly this is \emph{not} the case, as long as the Newton algorithm \eqref{eq:Newton_ite} converges sufficiently.

We employ the subsequent vector decomposition in the paper.
\begin{definition}\label{def:decomposition}
	On every local plane $\Omega_h$, we denote $\xi_{\Omega_h}$ and $\xi_{o}$ as the decomposed vectors of $\xi$, satisfying
	\[\xi =\xi_{\Omega_h} + \xi_o,\quad  \xi_o \perp  \Omega_h \quad \text{ and }\quad \xi_{\Omega_h} \perp \xi_o.\]
\end{definition}
Additionally, we introduce the following assumptions concerning $\Gamma_h$, $\Omega_{h}$, and $\hat{\Gamma}_h^k$, based on which we establish that the patches reconstructed from Algorithm \ref{alg:point_estimate} constitute a $k$-th order patchwise manifold.
\begin{assumption}\label{assum:geometry} 
	\begin{enumerate} 
		\item 	 $\Gamma$ is a $C^{k_0}$ smooth manifold with bounded curvature, and $k_0\geq \max\set{k,2}$, where $k$ is the polynomial order of the surface reconstruction in Algorithm \ref{alg:point_estimate}.
		\item Every triangular parameter domain $\Omega_h^j$ is $\mathcal{O}(h^2)$ close to the corresponding patch $\Gamma_h^j$, and shape regular. This is understood in the sense that for every selected triangle pair $\Omega_h^j$ and $\Gamma_h^j$, we have $\abs{\pi(x)-x}\lesssim h^2$ with some uniform constant $C$ independent of $h$ and $x$.
	\end{enumerate}
\end{assumption}

	\begin{proposition}\label{prop:nodal_accu}
		Suppose the Newton iteration in Algorithm \ref{alg:point_estimate} stops with a sufficiently small error. 
		Then the reconstructed nodal point $\{\psi^i=\pi_h^k(x^i)\}_{i\in\N} \subset \hat{\Gamma}_h^{k,j}$ and the precise nodal points $\{\xi^i=\pi(x^i)\}_{i\in\N}\subset \Gamma$ satisfy the following error estimate
		\begin{equation}\label{eq:geo_approx1}
		\abs{\psi^i-\xi^i }\lesssim  h^{k+1}.
		\end{equation}
		In particular, 
		\begin{equation}\label{eq:geo_approx2}
		\abs{\psi^i_{\Omega_h}-\xi^i_{\Omega_h}}\lesssim h^{k+2},
		\end{equation}
		where $\psi^i_{\Omega_h}$ and $\xi^i_{\Omega_h}$ are the nodal points of $\psi^i$ and $\xi^i$ projected onto the parametric domain $\Omega_h$, respectively.
	\end{proposition}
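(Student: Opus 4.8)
The plan is to reduce the statement to a comparison of two ``closest-point'' projections expressed in the local graph coordinates on a single patch $\Omega_h^j$, and then to exploit a cancellation in the defining nonlinear systems. Fix a patch and use the local frame in which $\Omega_h^j$ is the $(x_1,x_2)$-plane. Near this patch $\Gamma$ is the graph $(\sigma,\phi(\sigma))$ of a smooth height function $\phi$, while $\hat{\Gamma}_h^{k,j}$ is the graph of the fitted polynomial $p:=p_h^{k,j}$. For a parametric point $x=(x_1,x_2,0)$, the true node $\xi=\pi(x)$ and the reconstructed node $\psi=\pi_h^k(x)$ are, respectively, the orthogonal projections of $x$ onto these two graphs along their surface normals. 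Writing $\xi=(\sigma,\phi(\sigma))$ and $\psi=(\tau,p(\tau))$, the orthogonality conditions give that $\sigma$ solves $G_i^\phi(y):=\phi(y)\partial_i\phi(y)+y_i-x_i=0$ and $\tau$ solves $M_i(y):=p(y)\partial_i p(y)+y_i-x_i=0$, precisely system \eqref{eq:system2}. Thus the two projections solve systems of the \emph{same} form, differing only in replacing $\phi$ by $p$. The inputs I would assume are the polynomial-fitting bounds (Appendix \ref{app:approx}) $\norm{p-\phi}_{L^\infty(\Omega_h^j)}\leq Ch^{k+1}$ and $\norm{\partial_i(p-\phi)}_{L^\infty}\leq Ch^{k}$, together with the smallness estimates $\abs{\phi},\abs{p}\leq Ch^2$, $\abs{\partial_i\phi},\abs{\partial_i p}\leq Ch$, and $\abs{\partial_l\partial_i\phi},\abs{\partial_l\partial_i p}\leq C$, which follow from the $\mathcal{O}(h^2)$ closeness of $\Omega_h^j$ to $\Gamma_h^j$ in Assumption \ref{assum:geometry}, the bound on $\nu_{\Omega_h}$ in Lemma \ref{lem:normal_proj}, and the uniformly bounded curvature.

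The crux is a cancellation that makes the residual of the \emph{true} root in the \emph{polynomial} system one order better than naively expected. Since $G_i^\phi(\sigma)=0$, I would write
\begin{equation*}
M_i(\sigma)=M_i(\sigma)-G_i^\phi(\sigma)=p(\sigma)\partial_i p(\sigma)-\phi(\sigma)\partial_i\phi(\sigma)=(p-\phi)(\sigma)\,\partial_i p(\sigma)+\phi(\sigma)\,\partial_i(p-\phi)(\sigma),
\end{equation*}
and bound the two terms by $\norm{p-\phi}_{L^\infty}\abs{\partial_i p}\leq Ch^{k+1}\cdot h$ and $\abs{\phi}\,\norm{\partial_i(p-\phi)}_{L^\infty}\leq Ch^2\cdot h^k$, so that $\abs{M_i(\sigma)}\leq Ch^{k+2}$. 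This is exactly the surprising point flagged before the statement: although the system contains the derivative $\partial_i p$, whose error is only $\mathcal{O}(h^k)$, that derivative error enters \emph{multiplied} by the small factor $\phi=\mathcal{O}(h^2)$, so the differential operator does not degrade the accuracy but rather the residual gains an order.

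It remains to convert this residual bound into the node error via well-conditioning of the system. Differentiating, $\nabla M=\I+R$ with $R_{il}=\partial_i p\,\partial_l p+p\,\partial_l\partial_i p=\mathcal{O}(h^2)$, so for $h$ small $\nabla M$ is invertible with $\norm{(\nabla M)^{-1}}\leq C$ uniformly on the patch; this also guarantees the root $\tau$ is unique and that the Newton iteration \eqref{eq:Newton_ite} contracts to it, so up to the negligible stopping tolerance we may treat $\psi$ as the exact root $M(\tau)=0$. A mean-value argument between $\sigma$ and $\tau$ then yields $\abs{\tau-\sigma}\leq C\abs{M(\sigma)-M(\tau)}=C\abs{M(\sigma)}\leq Ch^{k+2}$. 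Since $\tau$ and $\sigma$ are exactly the in-plane (projected) node coordinates $\psi_{\Omega_h}$ and $\xi_{\Omega_h}$, this is \eqref{eq:geo_approx2}. Finally, for the full error I combine this with the height difference $\abs{p(\tau)-\phi(\sigma)}\leq\norm{p-\phi}_{L^\infty}+\abs{\nabla\phi}\,\abs{\tau-\sigma}\leq Ch^{k+1}+Ch\cdot h^{k+2}\leq Ch^{k+1}$, so that $\abs{\psi-\xi}\leq Ch^{k+1}$, which is \eqref{eq:geo_approx1}.

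The step I expect to be the main obstacle is establishing the $L^\infty$ fitting bounds $\norm{p-\phi}_{L^\infty}=\mathcal{O}(h^{k+1})$ and $\norm{\partial_i(p-\phi)}_{L^\infty}=\mathcal{O}(h^k)$ from the \emph{least-squares} fit \eqref{eq:local_patch_appro} on scattered sampled points, which needs the quasi-uniform distribution and well-conditioning hypotheses of Algorithm \ref{alg:point_estimate} and is best deferred to Appendix \ref{app:approx}. The residual cancellation itself, though the conceptual heart of the result, is then a short computation once those bounds and the near-identity Jacobian $\nabla M=\I+\mathcal{O}(h^2)$ are in hand.
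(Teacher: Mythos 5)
Your proof is correct and takes essentially the same route as the paper's: both reduce to the graph systems \eqref{eq:system2} for the exact and the reconstructed patch, invoke Lemma \ref{lem:aux_appr} together with the magnitude bounds $\abs{p},\abs{\phi}=\mathcal{O}(h^2)$, $\abs{\nabla p},\abs{\nabla \phi}=\mathcal{O}(h)$, and exploit the identical cancellation (the $\mathcal{O}(h^k)$ derivative error damped by the $\mathcal{O}(h^2)$ height, the $\mathcal{O}(h^{k+1})$ height error damped by the $\mathcal{O}(h)$ slope) to get the $\mathcal{O}(h^{k+2})$ in-plane bound \eqref{eq:geo_approx2} before assembling \eqref{eq:geo_approx1}. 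The only difference is cosmetic: where you bound the residual $M(\sigma)$ and invert the near-identity Jacobian $\nabla M=\I+\mathcal{O}(h^2)$, the paper subtracts the two fixed-point identities directly and absorbs the terms carrying $\abs{\psi^i_{\Omega_h}-\xi^i_{\Omega_h}}$ into the left-hand side --- two phrasings of the same perturbation argument.
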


Before proving Proposition \ref{prop:nodal_accu}, we present an auxiliary lemma regarding the polynomial approximation of scattering points. This lemma constitutes a standard result available in various references, such as \cite{Wen05}. For the sake of completeness, a proof is included in Appendix \ref{app:approx}.
\begin{lemma}\label{lem:aux_appr}
	Let the local patches be reconstructed as outlined in Algorithm \ref{alg:point_estimate}, representing function graphs of $k^{th}$ order polynomials, and let Assumption \ref{assum:geometry} hold for the point cloud. Assuming that the selected points are chosen such that the polynomial reconstruction is well-posed for every patch in Algorithm \ref{alg:point_estimate}, then the fitting polynomial function satisfies
	\begin{equation}\label{eq:polynomial}
		\norm{p_h^{k,j}- p^j}_{L^\infty(\Omega_h^j)} \lesssim  h^{k+1} \; \text{ and } \; \norm{\nabla p_h^{k,j}- \nabla p^j}_{L^\infty(\Omega_h^j)} \lesssim  h^{k} \quad \text{ for all } \quad  j\in J,
	\end{equation}
	where $p_h^{k,j}$ and $p^j$ denote the local polynomial function and the precise function, respectively, with the graph providing the exact patch on $\Gamma$.
\end{lemma}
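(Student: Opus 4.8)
The plan is to prove this by the classical Taylor-expansion argument for least-squares polynomial fitting, combined with a scaling-and-norm-equivalence step. Since the point cloud $\Sigma$ lies on $\Gamma$, in the local coordinates on $\Omega_h^j$ the selected data are exact samples of the true graph, i.e. $v_i^j = p^j(s_i^j)$; thus the fit is noiseless and $p^j$ is as smooth as $\Gamma$. Let $T^k p^j \in \mathbb{P}^k(\Omega_h^j)$ denote the degree-$k$ Taylor polynomial of $p^j$ expanded about the barycenter $c^j$. By Taylor's theorem together with the boundedness of the derivatives of $p^j$ up to order $k+1$ (guaranteed by the assumed smoothness of $\Gamma$), on a patch of diameter $\sim h$ one has $\norm{p^j - T^k p^j}_{L^\infty(\Omega_h^j)} \leq C h^{k+1}$ and $\norm{\nabla p^j - \nabla T^k p^j}_{L^\infty(\Omega_h^j)} \leq C h^k$. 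Because $T^k p^j$ and $p_h^{k,j}$ both lie in $\mathbb{P}^k(\Omega_h^j)$, it then suffices to control the polynomial $q := p_h^{k,j} - T^k p^j$ and conclude by the triangle inequality.

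Next I would exploit the optimality encoded in \eqref{eq:local_patch_appro}. Since $p_h^{k,j}$ minimizes the discrete residual over $\mathbb{P}^k(\Omega_h^j)$ and $T^k p^j$ is an admissible competitor, substituting $v_i^j = p^j(s_i^j)$ gives
\[
\frac{1}{m}\sum_{i=1}^m \abs{p_h^{k,j}(s_i^j) - p^j(s_i^j)}^2 \leq \frac{1}{m}\sum_{i=1}^m \abs{T^k p^j(s_i^j) - p^j(s_i^j)}^2 \leq C h^{2(k+1)},
\]
where the final bound is the pointwise Taylor estimate evaluated at the samples (each $s_i^j$ is within $\mathcal{O}(h)$ of $c^j$). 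A triangle inequality in the discrete $\ell^2$ seminorm then yields $\frac{1}{m}\sum_i \abs{q(s_i^j)}^2 \leq C h^{2(k+1)}$, so $q$ is uniformly small at all sample points.

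The decisive step is passing from this discrete control of $q$ to a uniform $L^\infty$ bound. I would rescale $\Omega_h^j$ (diameter $\sim h$) to a fixed reference patch of diameter $\mathcal{O}(1)$ through the affine change $x = c^j + h\hat x$, under which a degree-$k$ polynomial $q$ becomes $\hat q \in \mathbb{P}^k$ taking exactly the same set of values. On the finite-dimensional space $\mathbb{P}^k$ the discrete seminorm $\bigl(\frac{1}{m}\sum_i \abs{\hat q(\hat s_i^j)}^2\bigr)^{1/2}$ is equivalent to $\norm{\hat q}_{L^\infty}$; the well-posedness and uniform-distribution hypothesis on the sampling points is precisely what makes these equivalence constants uniform in $h$ and $j$ (it controls the condition number of the associated Gram/Vandermonde matrix). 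This gives $\norm{q}_{L^\infty(\Omega_h^j)} = \norm{\hat q}_{L^\infty} \leq C h^{k+1}$, and combined with the Taylor bound from the first paragraph, $\norm{p_h^{k,j} - p^j}_{L^\infty(\Omega_h^j)} \leq C h^{k+1}$.

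For the gradient estimate I would invoke an inverse (Markov) inequality on the reference patch, $\norm{\nabla \hat q}_{L^\infty} \leq C_k \norm{\hat q}_{L^\infty}$, with $C_k$ depending only on the fixed degree $k$ and the reference geometry. Undoing the scaling introduces a factor $1/h$, so $\norm{\nabla q}_{L^\infty(\Omega_h^j)} \leq (C_k/h)\norm{q}_{L^\infty} \leq C h^k$; adding the Taylor gradient bound yields the second estimate in \eqref{eq:polynomial}. The main obstacle throughout is ensuring that the norm-equivalence and inverse-inequality constants are genuinely uniform in $h$ and in the patch index $j$: this is the only place the argument can break down if the sampling degenerates as $h \to 0$, and it is exactly where the standing hypotheses that $m$ is sufficiently large, that the points are uniformly distributed, and that the fit is well-conditioned must be used.
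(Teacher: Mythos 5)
Your proof is correct, and it shares the paper's skeleton --- exploit the least-squares optimality of $p_h^{k,j}$ against a degree-$k$ competitor, then convert discrete smallness at the sample points into a uniform bound via the well-conditioning hypothesis --- but it instantiates both halves differently. The paper's competitor is the interpolant $p_h^{*k,j}$ built on a unisolvent subset of $m^*$ sample points, and its discrete-to-$L^\infty$ transfer is done by writing both polynomials in the Lagrange basis and using the uniform bound $\abs{\phi_i}\leq C$; your competitor is the Taylor polynomial $T^k p^j$, and your transfer is done by scaling to a fixed reference patch and invoking the equivalence of the normalized discrete seminorm $\bigl(\frac{1}{m}\sum_i\abs{\hat q(\hat s_i^j)}^2\bigr)^{1/2}$ with $\norm{\hat q}_{L^\infty}$ on the finite-dimensional space $\mathbb{P}^k$, with uniformity of the constants correctly traced back to the conditioning of the Gram/Vandermonde matrix. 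Your route buys two things: the $\frac{1}{m}$-normalized seminorm keeps all constants independent of $m$, whereas the paper's passage from the summed bound $\sum_{i=1}^m \abs{p^j(r_i^j)-p_h^{k,j}(r_i^j)}^2 \leq C\, m\, h^{2(k+1)}$ to the pointwise bound at each $r_i^j$ silently costs a factor $\sqrt{m}$ (harmless only if $m$ stays bounded as $h\to 0$); and you make the derivative estimate explicit via the Markov inverse inequality with its $1/h$ scaling factor, where the paper closes with an unspecified ``standard argument.'' Conversely, the paper's version is shorter and avoids your reference-configuration uniformity discussion by packaging it into the boundedness of the Lagrange basis functions. One caveat you share with the paper rather than introduce: your Taylor remainder bound (like the paper's interpolation estimate $\abs{p^j-p_h^{*k,j}}\leq Ch^{k+1}$) requires $p^j\in C^{k+1}$, i.e.\ effectively $k_0\geq k+1$, which is slightly stronger than the $k_0\geq\max\set{k,2}$ stated in Assumption \ref{assum:geometry}.
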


	\begin{proof}[\textbf{Proof of Proposition \ref{prop:nodal_accu}}] 
			In the proof, to ease the notation, we ignore all the index $j$, but keep in mind that we always consider the estimate patchwise.
		Recall the formula \eqref{eq:system1}. Then we have
		\begin{equation}\label{eq:decomposed_error}
		\psi^i_{\Omega_h}-\xi^i_{\Omega_h}= p( \xi^i_{\Omega_h}) \nabla p( \xi^i_{\Omega_h})-  p_h^{k}( \psi^i_{\Omega_h})\nabla p_h^{k}( \psi^i_{\Omega_h})\quad \text{ and } \quad  \psi^i_o- \xi^i_o= p_h^{k}( \psi^i_{\Omega_h})- p( \xi^i_{\Omega_h}),
		\end{equation}
		where $p_h^{k}:\Omega_h\to \R$ is the reconstructed local polynomial patch in Algorithm \ref{alg:point_estimate}, and $p:\Omega_h\to \R$ is the counterpart of $p_h^{k}$ associated to the patch on $\Gamma$. Note that $p$ and $p_h^{k}$ satisfy the estimate from Lemma \ref{lem:aux_appr}. In particular $k\geq 1$, referring to \eqref{eq:dist_ref} there are the estimates 
		\begin{equation}\label{eq:magnitude_est}
		\norm{p}_{L^\infty(\Omega_h)}\lesssim h^2 \quad \text{ and } \quad \norm{p_h^{k}}_{L^\infty(\Omega_h)}\lesssim h^2.
		\end{equation}
		To further simply the presentation, we will ignore the $L^\infty(\Omega_h)$ norm subscript in the rest of the proof.
		Note that \eqref{eq:magnitude_est} and \eqref{eq:polynomial} together imply that both $\norm{\nabla p}$ and $\norm{\nabla p_h^{k}} $ are of order $\mathcal{O}(h)$.
		Using the triangle inequality, the left equality in \eqref{eq:decomposed_error} leads to
		\begin{align*}
		|\psi^i_{\Omega_h}-\xi^i_{\Omega_h}|
		\leq &\norm{\nabla p( \xi^i_{\Omega_h})(p( \xi^i_{\Omega_h})-  p_h^{k}( \xi^i_{\Omega_h}) )}+ \norm{ (p_h^{k}( \xi^i_{\Omega_h})-  p_h^{k}( \psi^i_{\Omega_h}))\nabla p( \xi^i_{\Omega_h})}\\
		&	 +\norm{p_h^{k}( \psi^i_{\Omega_h})(\nabla p( \xi^i_{\Omega_h}) -   \nabla p_h^{k}( \xi^i_{\Omega_h})) }+\norm{ p_h^{k}( \psi^i_{\Omega_h})(\nabla p_h^{k}( \xi^i_{\Omega_h}) -  \nabla p_h^{k}( \psi^i_{\Omega_h}))}\\
		\leq 	&\norm{\nabla p( \xi^i_{\Omega_h})}\norm{p( \xi^i_{\Omega_h})-  p_h^{k}( \xi^i_{\Omega_h})}+
		L\norm{\psi^i_{\Omega_h}-\xi^i_{\Omega_h}}\norm{\nabla p( \xi^i_{\Omega_h})}\\
		&	 + \norm{ p_h^{k}( \psi^i_{\Omega_h})}\norm{\nabla p( \xi^i_{\Omega_h}) -   \nabla p_h^{k}( \xi^i_{\Omega_h})} +L'\norm{\psi^i_{\Omega_h}-\xi^i_{\Omega_h}}\norm{p_h^{k}( \psi^i_{\Omega_h})},
		\end{align*}
		where $L$ and $L'$ are the local Lipschitz constants of the polynomial functions $p_h^k$ and $\nabla p_h^{k}$ over $\Omega_h$, respectively. Particularly the magnitudes of $L$ and $L'$ are of order $\mathcal{O}(h)$ and $\mathcal{O}(1)$, respectively, thus uniformly bounded.
		Rearranging the above inequality, we have
		\begin{equation*}
		\begin{aligned}
		&\left(1- L\norm{\nabla p( \xi^i_{\Omega_h})} -L'\norm{p_h^{k}( \psi^i_{\Omega_h})}\right)	\norm{\psi^i_{\Omega_h}-\xi^i_{\Omega_h}}\\
		\leq&\norm{\nabla p( \xi^i_{\Omega_h})}\norm{p( \xi^i_{\Omega_h})-  p_h^{k}( \xi^i_{\Omega_h})} + \norm{ p_h^{k}( \psi^i_{\Omega_h})}\norm{\nabla p( \xi^i_{\Omega_h}) -   \nabla p_h^{k}( \xi^i_{\Omega_h})}.
		\end{aligned}
		\end{equation*}
		The above inequality tells that for sufficiently small $ h_0$,  there exists a constant $C$ such that 
		\[0< C \leq \left(1- L\norm{\nabla p( \xi^i_{\Omega_h})} -L'\norm{p_h^{k}( \psi^i_{\Omega_h})}\right) \quad \text{  for all }  \quad h\leq h_0.\]
		Then we have
		\begin{align*}
		&|\psi^i_{\Omega_h}-\xi^i_{\Omega_h}| \\
		\leq &\frac{1}{C}\left(\norm{\nabla p( \xi^i_{\Omega_h})}\norm{p( \xi^i_{\Omega_h})-  p_h^{k}( \xi^i_{\Omega_h})} + \norm{ p_h^{k}( \psi^i_{\Omega_h})}\norm{\nabla p( \xi^i_{\Omega_h}) -   \nabla p_h^{k}( \xi^i_{\Omega_h})}  \right),
		\end{align*}
		which together with \eqref{eq:polynomial} and \eqref{eq:magnitude_est} confirm \eqref{eq:geo_approx2}, i.e., $\norm{\psi^i_{\Omega_h}-\xi^i_{\Omega_h}}\leq Ch^{k+2}$.
		Back to the second part of \eqref{eq:decomposed_error}, similarly we have
		\begin{equation*}
		\begin{aligned}
		| \psi^i_o- \xi^i_o|\leq& \norm{ p_h^{k}( \psi^i_{\Omega_h})- p( \psi^i_{\Omega_h})} + \norm{ p( \psi^i_{\Omega_h})- p( \xi^i_{\Omega_h})}\leq Ch^{k+1}+L\norm{\psi^i_{\Omega_h}- \xi^i_{\Omega_h}}.
		\end{aligned}
		\end{equation*}
		This concludes the statement in \eqref{eq:geo_approx1} by noticing that
		\[ |\psi^i-\xi^i|\leq | \psi^i_o - \xi^i_o | + |\psi^i_{\Omega_h}- \xi^i_{\Omega_h}|.  \]
	\end{proof}
The error bounds \eqref{eq:geo_approx1} and \eqref{eq:geo_approx2} enable us to apply previous results in Lemma \ref{lem:inter_error} for the reconstructed patches from Algorithm \ref{alg:point_estimate}.
\emph{Note that for continuous interpolated patches, the error in Proposition \ref{prop:nodal_accu} vanishes.}
\begin{remark}
	The outcome of Proposition \ref{prop:nodal_accu} signifies that to achieve identical error estimates, it is not imperative for the sample points from the point cloud to lie precisely on the manifold. In essence, it suffices to ensure that the points reside within a $\mathcal{O}(h^{k+1})$ neighborhood, such that \eqref{eq:geo_approx1} and \eqref{eq:geo_approx2} can be fulfilled to derive the error bounds of the metric tensors in \eqref{eq:patch_dist}.
\end{remark}

	\subsection{Geometric error analysis}
The bounded curvature of $\Gamma$ implies another useful estimate $\abs{\nu_{\Omega_h}}\lesssim h$ as demonstrated in the following lemma.
\begin{lemma}\label{lem:normal_proj}
	Let Assumption \ref{assum:geometry} be fulfilled. Consider $\Omega_h=\cup_{j\in J} \Omega_h^j$ as the parametric domain where $\pi:\Omega_h\to \Gamma_h$ is defined as in \eqref{eq:projection}. If the diameter of every $\Omega_h^j$ is on the scale $h<1$, then for every unit normal vector of $\Gamma$, there exists a constant $C$ independent of $h$ such that
	\[ \abs{\nu_{\Omega_h}} \lesssim h,\]
	where $\nu_{\Omega_h}$ is the decomposition of $\nu$ as defined in Definition \ref{def:decomposition}.
\end{lemma}
\begin{proof}
	Consider two arbitrary unit normal vectors on the same patch $\Gamma_h^j$, denoted by $\nu(\xi^a)$ and $\nu(\xi^b)$. The uniformly bounded scalar curvature $\kappa$ implies
	\[ \sup_{\xi^a\neq \xi^b\in\Gamma_h^j} \frac{\abs{\nu(\xi^a) -\nu(\xi^b)}}{\abs{\xi^a-\xi^b}}\lesssim 1. \]
	Consider three vertices of the triangle $\Omega_h^j$ and find their corresponding vertices on $\Gamma_h^j$, forming another triangle in the plane $\hat{\Omega}_h^j$. Given the assumed condition that the triangle in $\Omega_h^j$ is $\mathcal{O}(h^2)$ close to $\Gamma_h^j$ and is shape regular, the two outward unit normal vectors of $\Omega_h^j$ and $\hat{\Omega}_h^j$, denoted by $z_h^j$ and $\hat{z}_h^j$, respectively, satisfy the error bounds
	\[ \abs{z_h^j-\hat{z}_h^j}\lesssim h.\]
	Select $\xi_j^b\in \Gamma_h^j$ such that the unit normal vector is parallel to $\hat{z}_h^j$. Note that $(z_h^j)_{\Omega_h}=0$. Then we have $\abs{\nu_{\Omega_h}(\xi_j^b)}\leq\abs{z_h^j-\hat{z}_h^j} \lesssim h $, and $\abs{\xi-\xi_j^b} \lesssim h$ for all $\xi \in \Gamma_h^j$. For every $j\in J$, we derive
	\begin{equation}\abs{\nu_{\Omega_h}(\xi)}\leq \abs{\nu_{\Omega_h}(\xi_j^b)} + \abs{\nu_{\Omega_h}(\xi) -\nu_{\Omega_h}(\xi_j^b)  }\lesssim h+ \abs{\nu(\xi)-\nu(\xi_j^b)}\lesssim h+ \abs{\xi-\xi^b}. 
	\end{equation}
\end{proof}
The scaling constant $C$  can be further optimized with respect to the scalar curvature $\kappa$, although it is not pursued here.
	
	\begin{lemma}\label{lem:inter_error}
		Let Assumption \ref{assum:geometry} be satisfied.
		Let $\pi:\Omega_h \to \Gamma_h$ and $\pi^k_h: \Omega_h \to \hat{\Gamma}_h^k$ be the mapping associated to the original patches on $\Gamma $ and reconstructed patches on $\hat{\Gamma}_h^k$ using Algorithm \ref{alg:point_estimate}, respectively.
		Then we have the following error bounds:
		\begin{equation}\label{eq:inter_error}
		\begin{aligned}
		\norm{\partial \pi - \partial \pi^k_h}_{L^\infty(\Omega_h)} \lesssim h^{k} , & \quad \norm{\left(\partial \pi - \partial \pi^k_h\right)_{\Omega_h}}_{L^\infty(\Omega_h)} \lesssim h^{k+1},\\
		\norm{\left( \partial \pi^k_h \right)_o }_{L^\infty(\Omega_h)} \lesssim h,	 & \quad	\text{ and } \quad  \norm{\left(\partial \pi \right)_o }_{L^\infty(\Omega_h)} \lesssim h.
		\end{aligned}  
		\end{equation} 
		
	\end{lemma}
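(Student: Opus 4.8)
The plan is to measure both parameterizations against a common reference, namely the componentwise degree-$k$ Lagrange interpolant $I_h\pi$ of the exact map on each triangle $\Omega_h^j$, taken at the same nodes $x^i$ used in Algorithm \ref{alg:point_estimate}. By construction $I_h\pi(x^i)=\xi^i=\pi(x^i)$ while $\pi^k_h(x^i)=\psi^i$, so on each patch
\begin{equation*}
\partial\pi-\partial\pi^k_h=\partial(\pi-I_h\pi)+\partial(I_h\pi-\pi^k_h),
\end{equation*}
and I would estimate the two summands in the full norm and in the in-plane part $(\cdot)_{\Omega_h}$ of Definition \ref{def:decomposition} in parallel, using throughout that $(\cdot)_{\Omega_h}$ is a fixed linear projection that commutes with $I_h$.

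I would first clear the orthogonal bounds. Writing $P_\tau=\I-\nu\nu^\top$ and $\partial\pi=P_\tau-dH$, the $e_3$-component of $\partial\pi\,e_\alpha$ equals $-\nu_\alpha\nu_o-d(He_\alpha)_o$; since $\nu_\alpha=(\nu_{\Omega_h})_\alpha=\mathcal{O}(h)$ by Lemma \ref{lem:normal_proj}, $\nu_o=\mathcal{O}(1)$, $d=\mathcal{O}(h^2)$ and $H=\mathcal{O}(1)$, this gives the fourth estimate, $\norm{(\partial\pi)_o}\le Ch$. For the full interpolation error $\partial(\pi-I_h\pi)$, the dominant part is the orthogonal (graph-height) component of $\pi$, whose derivatives up to order $k+1$ are bounded, so the standard interpolation estimate on a shape-regular triangle of size $h$ gives $\mathcal{O}(h^k)$. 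For $\partial(I_h\pi-\pi^k_h)$, the difference is a degree-$k$ polynomial with nodal values $\xi^i-\psi^i=\mathcal{O}(h^{k+1})$ by Proposition \ref{prop:nodal_accu}; nodal stability together with the inverse inequality then yields $Ch^{-1}\cdot h^{k+1}=Ch^k$. These give the first estimate, and the third follows for free from $(\partial\pi^k_h)_o=(\partial\pi)_o-(\partial\pi-\partial\pi^k_h)_o$, both terms on the right being $\mathcal{O}(h)$ since $k\ge1$.

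The crux is the second estimate, where one extra power of $h$ must be gained in the in-plane directions. On the discrete side I would use the improved in-plane nodal accuracy $\abs{\psi^i_{\Omega_h}-\xi^i_{\Omega_h}}\le Ch^{k+2}$ from Proposition \ref{prop:nodal_accu}: since projection commutes with interpolation, $(I_h\pi-\pi^k_h)_{\Omega_h}$ is a degree-$k$ polynomial with nodal values $\mathcal{O}(h^{k+2})$, so the inverse inequality gives $\norm{(\partial(I_h\pi-\pi^k_h))_{\Omega_h}}\le Ch^{-1}\cdot h^{k+2}=Ch^{k+1}$. On the continuous side I would use $\pi_{\Omega_h}(x)=x-d(x)\nu_{\Omega_h}(x)$: the affine identity part is reproduced exactly by $I_h$, so the error comes only from the correction $r:=-d\,\nu_{\Omega_h}$. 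Because $\abs{\nu_{\Omega_h}}\le Ch$ while $d=\mathcal{O}(h^2)$ and the higher derivatives of $d$ and $\nu$ remain $\mathcal{O}(1)$, a Leibniz count shows $\abs{r}_{W^{k+1,\infty}(\Omega_h^j)}=\mathcal{O}(h)$ (the leading term puts all derivatives on $d$ and retains the small factor $\nu_{\Omega_h}$), whence $\norm{\partial(r-I_hr)}\le Ch^k\cdot\mathcal{O}(h)=\mathcal{O}(h^{k+1})$; the triangle inequality finishes the second estimate.

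The main obstacle is exactly this second estimate: both halves rely on detecting the hidden extra order of smallness in the in-plane direction — the $h^{k+2}$ nodal accuracy of Proposition \ref{prop:nodal_accu} discretely, and the $\mathcal{O}(h)$ size of $\nu_{\Omega_h}$ from Lemma \ref{lem:normal_proj} controlling the $W^{k+1,\infty}$ seminorm of $r$ continuously. I would make sure the constants in the inverse and interpolation inequalities are uniform over patches, which is ensured by the shape regularity and well-conditioned node sets in Assumption \ref{assum:geometry} and Algorithm \ref{alg:point_estimate}, and I would keep track of the regularity needed for the interpolation bounds (namely $\pi\in C^{k+1}$), inherited from the $C^{k_0}$ smoothness of $\Gamma$ in Assumption \ref{assum:geometry}.
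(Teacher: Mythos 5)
Your overall architecture --- comparing both maps to the Lagrange interpolant of the exact nodal points (the paper's $\tilde{\pi}_h^k$), handling the discrete halves $\partial(I_h\pi-\pi_h^k)$ by the nodal accuracy of Proposition \ref{prop:nodal_accu} plus an inverse inequality, and the continuous halves by interpolation theory --- is exactly the paper's, and those parts are sound; your derivation of the bound $\|(\partial\pi)_o\|\leq Ch$ from $\partial\pi=P_\tau-dH$ and Lemma \ref{lem:normal_proj} is in fact more explicit than the paper's one-line remark. The genuine gap is in the continuous half of the crucial second estimate. Your claim that $r=-d\,\nu_{\Omega_h}$ satisfies $\abs{r}_{W^{k+1,\infty}(\Omega_h^j)}=\mathcal{O}(h)$ by a Leibniz count is false for $k\geq 2$. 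The only small quantities available are $d=\mathcal{O}(h^2)$ and $\partial d=\nu_{\Omega_h}=\mathcal{O}(h)$; the higher derivatives $\partial^m d$ for $m\geq 2$ and $\partial^m\nu_{\Omega_h}$ for $m\geq 1$ are curvature-sized, i.e.\ $\mathcal{O}(1)$. Hence any Leibniz term with at least two derivatives on $d$ and at least one on $\nu_{\Omega_h}$ --- for instance the term $\partial^{k}d\cdot\partial\nu_{\Omega_h}$, which appears as soon as $k+1\geq 3$ --- is $\mathcal{O}(1)$, not $\mathcal{O}(h)$, and your argument then only yields $\|\partial(r-I_hr)\|=\mathcal{O}(h^k)$, one order short. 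Concretely, for the unit circle over a chord (local coordinate $t$, $c=\cos(h/2)$) one has $d(t)=\sqrt{t^2+c^2}-1$ and $\nu_{\Omega_h}(t)=t/\sqrt{t^2+c^2}$, so $r(t)=t(t^2+c^2)^{-1/2}-t$ and $r'''(0)=-3/c^3=\mathcal{O}(1)$: the $W^{3,\infty}$ seminorm does not shrink with $h$.

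The reason this cannot be repaired within your route is that the extra power of $h$ in the paper does not come from smallness of the geometric data $d,\nu$ at all; it comes from smallness of the interpolation error itself, kept in factored form. The paper writes the continuous half as $\pi-\tilde{\pi}_h^k=\tilde{d}_h^k\,\nu$, a scalar function with $\abs{\tilde{d}_h^k}\leq c_1h^{k+1}$ and $\abs{(\tilde{d}_h^k)'}=\mathcal{O}(h^k)$ multiplying the unit normal, and only then projects onto $\Omega_h$ and applies the product rule: each resulting term pairs either $\tilde{d}_h^k=\mathcal{O}(h^{k+1})$ with $(\partial\nu)_{\Omega_h}=\mathcal{O}(1)$, or $(\tilde{d}_h^k)'=\mathcal{O}(h^k)$ with $\nu_{\Omega_h}=\mathcal{O}(h)$ from Lemma \ref{lem:normal_proj}, so every term is $\mathcal{O}(h^{k+1})$. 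It is precisely this pairing of "derivative of the small error scalar" with "in-plane component of the normal" that produces the extra order; your proposal instead pairs derivatives of $d$ with derivatives of $\nu_{\Omega_h}$, where the factor of $h$ is genuinely lost once $k\geq 2$. As written, your proof of the second estimate is valid only for $k=1$; for $k\geq 2$ you need to re-express the continuous half so that the $\mathcal{O}(h^{k+1})$ smallness of the interpolation error (not of $d$) carries through the differentiation.
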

	
	\begin{proof}
		Definition \ref{def:decomposition} and  the linearity of the Jacobian imply that 
		\[\partial \pi=(\partial \pi)_o + (\partial \pi)_{\Omega_h}
		\quad \text{ and } \quad 
		\partial \pi^k_h=(\partial \pi^k_h)_o + (\partial \pi^k_h)_{\Omega_h} .
		\]
		Let $\tilde{\pi}_h^k $ be the polynomial patch by interpolating the exact nodal points $\set{\xi^i}$. 
		Therefore we have that $\tilde{\pi}_h^k=\sum_{i=1}^m \xi^i \phi_i$ and $\pi_h^k=\sum_{i=1}^m \psi^i \phi_i$, where $m$ is the number of nodal points depending on the interpolating polynomial orders, and $\set{\phi_i}_{i=1}^m$ are the Lagrange polynomial bases. Based on  Assumption \ref{assum:geometry} and Proposition \ref{prop:nodal_accu}, we have that
		\[ \norm{ \partial\tilde{\pi}_h^k - \partial \pi_h^k}_{L^\infty(\Omega_h)}\lesssim h^k. \]
		To show the first estimate in \eqref{eq:inter_error}, we simply use the triangle inequality and the interpolation error estimates to derive
		\[\norm{ \partial \pi - \partial \pi_h^k}_{L^\infty(\Omega_h)}\leq \norm{  \partial \pi - \partial \tilde{\pi}_h^k}_{L^\infty(\Omega_h)} + \norm{\partial \tilde{\pi}_h^k - \partial \pi_h^k}_{L^\infty(\Omega_h)} \lesssim h^k.\]
		For the second estimate in \eqref{eq:inter_error}, notice that $\left(\partial \tilde{\pi}_h^k - \partial \pi^k_h\right)_{\Omega_h}=\sum_{i=1}^m( \psi^i_{\Omega_h}-\xi^i_{\Omega_h})\nabla\phi_i$.
		Then by the triangle inequality we have
		\begin{equation}\label{eq:decompose}
		\norm{ (\partial \pi - \partial \pi_h^k)_{\Omega_h}}_{L^\infty(\Omega_h)}\leq \norm{ (\partial \pi - \partial \tilde{\pi}_h^k)_{\Omega_h}}_{L^\infty(\Omega_h)} + \norm{(\partial \tilde{\pi}_h^k - \partial \pi_h^k)_{\Omega_h}}_{L^\infty(\Omega_h)}.
		\end{equation}
		Now we estimate the first term on the right hand side of \eqref{eq:decompose}.  Since $\tilde{\pi}_h^k$ is the local polynomial interpolation of $ \pi$ parametrised using the coordinates on $\Omega_h$, for every point $x\in \Omega_h$, there exits a unit normal vector $\nu$ on $\Gamma_h$, so that $ \pi - \tilde{\pi}_h^k= (\tilde{d}_h^k)\nu$.  To see this, we use the map by $r(z)=z-\tilde{d}_h^k\nu(z)$ for all $z\in \Gamma_h$, and  $r:\Gamma_h\to \tilde{\Gamma}_h^{k}$ is a bijection. Recall that $z=\pi(x)=x-d\nu(x)$, which shows that $\pi(x)-\tilde{\pi}_h^k(x)=\tilde{d}_h^{k}\nu(x)$ where $\nu(x)=\nu(z)$ denotes the unit normal vector on $\Gamma_h$.
Using  \cite[Proposition 2.3]{Demlow2009} we have $\abs{\tilde{d}_h^k}\lesssim h^{k+1}$  which is the distances of points from $\pi$ to $\tilde{\pi}_h^k$  along the normal vector $\nu$. 

		Lemma \ref{lem:normal_proj} says that on every local patch $\Omega_h^j$, $\abs{(\nu)_{\Omega_h}}\lesssim h$.
		The product rule and the triangle inequality imply
		\begin{align*}
		\norm{ (\partial \pi - \partial \tilde{\pi}_h^k)_{\Omega_h}}_{L^\infty(\Omega_h)}	= &\norm{\partial (\tilde{d}_h^k  \nu_{\Omega_h})}_{L^\infty(\Omega_h)}
		\lesssim &
		h\norm{ \nabla (\tilde{d}_h^k)}_{L^\infty(\Omega_h)} + h^{k+1}\norm{ (\partial\nu)_{\Omega_h}}_{L^\infty(\Omega_h)}  =\mathcal{O}(h^{k+1}). 
		\end{align*}
		Note here we have used  \cite[Proposition 2.3]{Demlow2009} for $\norm{ \nabla (\tilde{d}_h^k)}_{L^\infty(\Omega_h)}=\mathcal{O}(h^{k})$, and $(\partial\nu)_{\Omega_h}$ is  bounded by curvature.
		The second term on the right hand side of \eqref{eq:decompose} can be estimated from the inequality in \eqref{eq:geo_approx2}. These conclude the proof of the second item in \eqref{eq:inter_error}. 
		The third and the fourth estimates in \eqref{eq:inter_error} follow from the fact that, for every fixed index $i$, $\psi^i_o$ and $\xi^i_o$ are both of $\mathcal{O}(h^2)$ distance to their corresponding parametric domain $\Omega_h$.
	\end{proof}
	
	
	To simplify notations, we will not distinguish metric tensors $g$ and $g^k_h$ with $g\circ \pi$ and $g^k_h\circ \pi^k_h$ in the following.
	In Theorem \ref{thm:geo_error}, we show that $\bigcup_{j\in J}\hat{\Gamma}_h^{k,j}$ given by polynomial interpolating the nodal points $(\psi_i^j)_{i\in I_j}$ is a $k$-th order patchwise manifold. 
	\begin{theorem}\label{thm:geo_error}
		Given the same condition as Lemma \ref{lem:inter_error}, we have
		\begin{equation}\label{eq:metric_error}
		\begin{aligned}
		\norm{g^{-1} (g -g^k_h )}_{L^\infty}\lesssim h^{k+1}, &\quad\norm{\frac{\sqrt{\abs{g}}-\sqrt{\abs{g^k_h} }}{\sqrt{\abs{g}}}}_{L^\infty}  \lesssim h^{k+1},
		\\ \norm{\frac{l_{g_h^k}-l_g}{l_g}}_{L^\infty} \lesssim h^{k+1}, \quad \text{ and }	&\quad \norm{g^{-1} (\partial \pi )^\top n - (g_h^k)^{-1} (\partial \pi^k_h )^\top n^k_h }_{L^\infty}\lesssim h^{k+1},
		\end{aligned}  
		\end{equation}
		where $l_{g_h^k}$ and $l_g$ are restriction of $g_h^k$ and $g$ on the edges of the curved triangles, and $n$ and $n_h^k$ are the outer unit conormal vectors of $\partial \Gamma_h$ and $\partial  \hat{\Gamma}^{k}_h$ (i.e., the boundary of the curved triangles), respectively.

	\end{theorem}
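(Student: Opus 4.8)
The plan is to prove the four estimates in \eqref{eq:metric_error} by first establishing the metric estimate $\norm{g^{-1}(g-g_h^k)}_{L^\infty}\leq Ch^{k+1}$, and then deducing the remaining three as consequences through elementary perturbation arguments. Throughout I will use that $g=(\partial\pi)^\top\partial\pi$ and $g_h^k$ are uniformly symmetric positive definite with eigenvalues bounded away from $0$ and $\infty$ independently of $h$ (a consequence of bounded curvature and $\partial\pi=P_\tau-dH$), so that $g,g^{-1}=\mathcal{O}(1)$ and, for small $h$, the same holds for $g_h^k$. Since $g$ is uniformly invertible, the bound on $g^{-1}(g-g_h^k)$ is equivalent to $\norm{g-g_h^k}_{L^\infty}\leq Ch^{k+1}$, and I will work with the latter.

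The heart of the argument --- and the origin of the reported discrepancy phenomenon --- is the column-wise orthogonal decomposition of Definition \ref{def:decomposition} applied to the Jacobians. Writing $\partial\pi=(\partial\pi)_{\Omega_h}+(\partial\pi)_o$, the columns of $(\partial\pi)_{\Omega_h}$ lie in the plane of $\Omega_h^j$ while those of $(\partial\pi)_o$ lie along its normal, so the cross terms vanish exactly, $(\partial\pi)_{\Omega_h}^\top(\partial\pi)_o=0$, giving
\[
g=(\partial\pi)_{\Omega_h}^\top(\partial\pi)_{\Omega_h}+(\partial\pi)_o^\top(\partial\pi)_o,
\]
and likewise for $g_h^k$. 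Subtracting and factoring each block by $A^\top A-B^\top B=A^\top(A-B)+(A-B)^\top B$, I bound the tangential contribution by $\norm{(\partial\pi-\partial\pi_h^k)_{\Omega_h}}\cdot\mathcal{O}(1)=\mathcal{O}(h^{k+1})$ using the second bound of Lemma \ref{lem:inter_error}, and the orthogonal contribution by $\norm{(\partial\pi)_o}\cdot\norm{(\partial\pi-\partial\pi_h^k)_o}+\norm{(\partial\pi-\partial\pi_h^k)_o}\cdot\norm{(\partial\pi_h^k)_o}=\mathcal{O}(h)\cdot\mathcal{O}(h^k)=\mathcal{O}(h^{k+1})$, where the orthogonal Jacobian error is controlled by the (1-Lipschitz) projection of the first bound and the normal blocks are $\mathcal{O}(h)$ by the third and fourth bounds. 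The two mechanisms differ --- a directly higher-order tangential error versus an $\mathcal{O}(h)$-small normal block amplified only to $\mathcal{O}(h^k)$ --- yet both land at $h^{k+1}$, which is exactly the one-order gain over the $\mathcal{O}(h^k)$ Jacobian error.

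Given $\norm{g-g_h^k}_{L^\infty}\leq Ch^{k+1}$, the determinant and line-element estimates follow from Lipschitz continuity of scalar functions of $g$ on the compact set where $g$ stays uniformly positive definite. For the second estimate, $\det$ is a polynomial in the entries and $\sqrt{\cdot}$ is Lipschitz away from $0$, so $\abs{\sqrt{\abs{g}}-\sqrt{\abs{g_h^k}}}\leq C\norm{g-g_h^k}$, and dividing by $\sqrt{\abs{g}}$, bounded below, preserves the rate. For the third, writing the edge length element as $l_g=\sqrt{\hat\tau^\top g\hat\tau}$ for the Euclidean-unit edge tangent $\hat\tau$ in the parameter domain (which is \emph{shared} by $\pi$ and $\pi_h^k$), the same Lipschitz-square-root argument gives $\abs{l_{g_h^k}-l_g}\leq C\norm{g-g_h^k}=\mathcal{O}(h^{k+1})$, and division by $l_g$ is harmless.

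The fourth estimate needs one extra geometric observation, and I expect it to be the main obstacle, since $\partial\pi-\partial\pi_h^k$ is only $\mathcal{O}(h^k)$ and a naive triangle-inequality split would lose an order. The key is to recognize $g^{-1}(\partial\pi)^\top n$ as the parameter-domain representation of the conormal: because $n$ is tangent to $\Gamma$ it lies in $\mathrm{range}(\partial\pi)$, so $n=\partial\pi w$ for a unique $w\in\R^2$, whence $g^{-1}(\partial\pi)^\top n=g^{-1}(\partial\pi)^\top\partial\pi w=w$. This $w$ is characterized algebraically by $w^\top g\hat\tau=0$ and $w^\top g w=1$ with a fixed outward sign, i.e. $w=Jg\hat\tau/\sqrt{(Jg\hat\tau)^\top g(Jg\hat\tau)}$ with $J$ the $90^\circ$ rotation; crucially $\hat\tau$ is the same for both surfaces because they share $\Omega_h$. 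Hence $g^{-1}(\partial\pi)^\top n-(g_h^k)^{-1}(\partial\pi_h^k)^\top n_h^k=w(g)-w(g_h^k)$ is the increment of one smooth $\R^2$-valued function of the metric alone. Lipschitz continuity of $g\mapsto w(g)$ on the uniformly-positive-definite range, together with the first estimate, yields the bound $Ch^{k+1}$. The subtle points to verify are that $g\mapsto w(g)$ is genuinely $h$-uniformly Lipschitz (guaranteed by the uniform spectral bounds on $g$) and that the outward orientations of $n$ and $n_h^k$ are chosen consistently so that $w(g)$ and $w(g_h^k)$ use the same branch.
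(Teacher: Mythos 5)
Your proposal is correct, and for the first three estimates it is essentially the paper's own argument: the paper likewise passes to graph coordinates over $\Omega_h$ (legitimate because every quantity in \eqref{eq:metric_error} is coordinate-independent), exploits the exact orthogonality of the $\Omega_h$- and $o$-parts of the Jacobians, expands $g-g_h^k$ into products of sums and differences of the decomposed Jacobians, and inserts the four bounds of Lemma \ref{lem:inter_error}; your asymmetric factorization $A^\top(A-B)+(A-B)^\top B$ versus the paper's symmetrized one is immaterial, and your Lipschitz arguments for the determinant and edge-element bounds merely supply details the paper explicitly skips. The genuine divergence is the endgame of the fourth estimate. Both proofs start from the same collapse $g^{-1}(\partial \pi)^\top n = e_n$ (your $w(g)$), with $e_n$, $e_{n_h}$ living in the shared parameter domain; but the paper then derives the scalar identity $(e_n-e_{n_h})^\top g\,(e_n+e_{n_h}) = e_{n_h}^\top (g_h^k-g)\, e_{n_h}$ from the two unit-length constraints and reads off $\abs{e_n-e_{n_h}}\leq C\norm{g-g_h^k}_{L^\infty}$ from positive definiteness. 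As written, that last inference is loose: a single scalar equation cannot control the vector $e_n-e_{n_h}$ unless one also uses that both vectors are, in their respective metrics, orthogonal to the same edge tangent $\hat\tau$, which pins them to nearly the same one-dimensional direction. Your closed-form conormal $w(g)=Jg\hat\tau/\bigl((Jg\hat\tau)^\top g\, Jg\hat\tau\bigr)^{1/2}$ builds exactly this orthogonality in from the start (indeed $(Jg\hat\tau)^\top g\hat\tau=0$ by skew-symmetry of $J$, and the outward orientation is automatically consistent for both metrics because $(Jg\hat\tau)^\top J\hat\tau=\hat\tau^\top g\hat\tau>0$), so uniform Lipschitz dependence of $w$ on $g$ over the uniformly positive-definite range finishes the bound with no gap. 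In short: same skeleton throughout, but your treatment of the conormal term is tighter and can be read as a repair of the paper's final step, at the cost of needing the explicit two-dimensional rotation $J$, whereas the paper's identity is shorter and dimension-free once supplemented by the shared-$\hat\tau$ observation.
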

	\begin{proof}
		Recall that $g\circ \pi=(\partial \pi)^\top \partial \pi$, and $g_h^k\circ \pi_h^k=(\partial \pi_h^k)^\top \partial \pi_h^k$. Note that all the quantities in \eqref{eq:metric_error} are independent of the local coordinate system, therefore, we choose both $\pi$ and $\pi_h^k$ the function graphs on $\Omega_{h}$. In this case $(\partial \pi)_{\Omega_h}$ and $(\partial \pi)_o$ are orthogonal, and the same holds for $\pi_h^k$.
		Then we have
		\begin{equation*}
		\begin{aligned}
			g^{-1} (g -g^k_h ) = &g^{-1} ((\partial \pi)^\top \partial \pi  - (\partial \pi_h^k)^\top \partial \pi_h^k )\\
		=&g^{-1} \frac{1}{2}\left(((\partial \pi)^\top  + (\partial \pi_h^k)^\top )(\partial \pi -\partial \pi_h^k ) + ((\partial \pi)^\top  - (\partial \pi_h^k)^\top )(\partial \pi + \partial \pi_h^k )\right) \\
		=&g^{-1} \frac{1}{2}\left(((\partial \pi)^\top  + (\partial \pi_h^k)^\top )_{\Omega_h}(\partial \pi -\partial \pi_h^k )_{\Omega_h} + ((\partial \pi)^\top  - (\partial \pi_h^k)^\top )_{\Omega_h}(\partial \pi + \partial \pi_h^k )_{\Omega_h} \right) \\
		&+g^{-1} \frac{1}{2}\left(((\partial \pi)^\top  + (\partial \pi_h^k)^\top )_{o}(\partial \pi -\partial \pi_h^k )_{o} + ((\partial \pi)^\top  - (\partial \pi_h^k)^\top )_{o}(\partial \pi + \partial \pi_h^k )_{o} \right) ,
		\end{aligned}  
		\end{equation*}
		where we use the decomposition in Definition \ref{def:decomposition}.
		Note that due to the assumptions on $\Gamma$, $g^{-1}$ is uniformly bounded independent of $h$. 
		Then using the estimate from Lemma \ref{lem:inter_error}, we have the following  
		\[ \norm{(\partial \pi -\partial \pi_h^k )_{\Omega_h}}_{L^\infty(\Omega_h)}\lesssim h^{k+1}  \text{ and }  \norm{((\partial \pi)^\top  - (\partial \pi_h^k)^\top )_{o}}_{L^\infty(\Omega_h)}\lesssim h^k\]
		and $  \norm{(\partial \pi +\partial \pi_h^k )_{\Omega_h}}_{L^\infty(\Omega_h)}\leq C$ is uniformly bounded for some constant $C$ independent of $h$, and
		\[  \norm{((\partial \pi)^\top  + (\partial \pi_h^k)^\top )_{o}}_{L^\infty(\Omega_h)}\lesssim h .\]
		These together give us the first estimate.
		The second and the third estimates are consequences derived immediately from the first estimate.
		We skip the details here.
		For the forth one, notice that $n=\partial \pi e_n$ and $n_h^k=\partial \pi_h^k e_{n_h}$ where $e_n$ and $e_{n_h}$ both are $2\times 1$ vectors defined on the edges of the triangles in the common planar parameter domain $\Omega_h$.
		Since both $\Gamma_h$ and $\hat{\Gamma}_h^k$ have uniformly bounded curvature, then the lengths of both $e_n$ and $e_{n_h}$ have uniform upper bound and also uniform lower bound larger than zero.
		Thus
		\[g^{-1} (\partial \pi )^\top n - (g_h^k)^{-1} (\partial \pi^k_h )^\top n^k_h = g^{-1}g e_n-  (g_h^k)^{-1} g_h^k e_{n_h} =e_n-e_{n_h}.\]
		On the other hand, since both $n$ and $n_h$ are unitary vectors, and $g$ is positive definite, we have
		\[ e_n^\top g e_n= e_{n_h}^\top g_h^k e_{n_h}=1 \quad \Rightarrow \quad (e_n-e_{n_h}) ^\top g (e_n+e_{n_h}) = e_{n_h}^\top (g_h^k -g) e_{n_h}. \]
		Multiply the Moore-Penrose pseudoinverse of $g (e_n+e_{n_h}) $, denoted by $\phi$, on both sides. For a vector, such $\phi$ exists and unique, and $\phi= [(e_n+e_{n_h})^\top g g (e_n+e_{n_h})]^{-1}(e_n+e_{n_h})^\top g $.   
		We get the equation
		\[(e_n-e_{n_h}) ^\top = e_{n_h}^\top (g_h^k -g) e_{n_h} \phi.
		\]
		Then due to the uniform boundedness of the relevant quantities $ e_{n_h}$ and $\phi$, we derive
		\[\norm{e_n-e_{n_h}}_{L^\infty} \lesssim \norm{g-g_h^k}_{L^\infty} , \]
		which then gives us the estimate that
		\[ \norm{g^{-1} (\partial \pi )^\top n - (g_h^k)^{-1} (\partial \pi^k_h )^\top n^k_h}_{L^\infty}= \norm{e_n-e_{n_h}}_{L^\infty(\Omega_h)}  =\mathcal{O}(h^{k+1}).\]
	\end{proof}

\begin{remark}\label{rem:discrepancy}
We note that surprisingly the two error estimates of the Jacobian $\norm{\partial \pi -\partial \pi_h^k }_{L^\infty(\Omega_h)} $ and the metric tensors $ \norm{g-g_h^k}_{L^\infty}$ are not of the same order, even though the metric tensors are represented by product of Jacobian and its transpose given in \eqref{eq:metric_tensor}. The extra order for the metric tensor approximation is due to the assumption that the local patches parameterized by $\Omega_h$ are in the $\mathcal{O}(h^2)$ neighborhood of $\Omega_h$, which results in the estimate \eqref{eq:geo_approx2}. For the interpolating polynomial patches of a smooth manifold as in the literature, the estimate in \eqref{eq:geo_approx2} is automatically fulfilled, which, however, seems to be not granted for arbitrary approximations. \emph{Notice that, with only the condition in \eqref{eq:geo_approx1}, it is in general not sufficient to prove the results in Theorem \ref{thm:geo_error}.} To show that \eqref{eq:geo_approx2} is indeed required, an immediate example one may think is a right-angled triangle with two orthogonal edges of length $h$, while another right-angled triangle in the same plane with two orthogonal edges of length $h+h^{k+1}$. For such a pair, the inequality of \eqref{eq:geo_approx1} is satisfied but not the one in \eqref{eq:geo_approx2}. Then the ratio of the areas of the two triangles is $1+2h^{k}+h^{2k}$, i.e., $\norm{\frac{\sqrt{\abs{g}}-\sqrt{\abs{g^k_h} }}{\sqrt{\abs{g}}}}_{L^\infty}  =2h^{k}+h^{2k}$. That is it has one order less than the result in Theorem \ref{thm:geo_error}.
\end{remark}

	\section{DG methods in curved domain and their error analysis}
	\label{sec:Laplace}
To illustrate the main idea better, we focus on two important exemplary elliptical PDEs on $\Gamma$. The first one is the Laplace-Beltrami equation: Given $f\in L^{2}(\Gamma)$, find $u\in H^2(\Gamma)$ such that
\begin{equation}\label{eq:Lap_Bel}
- \Delta_g u + u = f,
\end{equation}
where $g$ is the Riemannian metric of $\Gamma$. Note that here we introduce a reaction term to take care of the uniqueness of the solution. The weak solution of problem \eqref{eq:Lap_Bel} is to find $u\in H^1(\Gamma)$ such that 
\begin{equation}\label{eq:weak_Lap_Bel}
\mathcal{A}(u, v)= (f, v), \quad \text{ for all }  v\in H^1(\Gamma),
\end{equation}
where 
\begin{align*}
\mathcal{A}(u, v) = \int_\Gamma (\nabla_g u\cdot \nabla_g v + u v) dA_g,\quad \text{ and } \quad (f, v) = \int_\Gamma f v dA_g.
\end{align*}
Here $A_g$ denotes the surface measure on $\Gamma$ taking into account the Riemannian metric $g$. Later we use $E_g$  to denote the restricted measure on edges of the curved triangles. We will adopt similar notations like $A_{g_h^k}$ and  $E_{g_h^k}$ on the discrete surface and its edges. 

The second model equation is the eigenvalue problem associated with the Laplace-Beltrami operator on $\Gamma$: find pairs $(\lambda,u)\in (\mathbb{R}^+, H^2(\Gamma))$ such that
\begin{equation}\label{eq:eigen_Lap_Bel}
- \Delta_g u = \lambda u.
\end{equation}
The eigenvalues are assumed to be ordered so that $0= \lambda_1\leq \lambda_2\leq \cdots \leq \lambda_n\leq \cdots$, and the corresponding eigenfunctions are normalized to satisfy $\norm{u_i}_{L^2(\Gamma)}=1$.
The weak formulation of \eqref{eq:eigen_Lap_Bel} reads as: find $(\lambda,u)\in (\mathbb{R}^+, H^1(\Gamma))$ with $\norm{u}_{L^2(\Gamma)}=1$ such that
\begin{equation}\label{eq:weak_eigen_Lap_Bel}
\int_\Gamma \nabla_g u \cdot \nabla_g v dA_g= \lambda \int_\Gamma  u v dA_g  \quad \text{ for all }  v\in H^1(\Gamma).
\end{equation}
Well-posedness of the above problems has been discussed, for instance, in \cite{Aubin1982}. Our goal here is to analyze DG methods when $\Gamma$ is approximated by $\hat{\Gamma}_h^k$. We first introduce relevant function spaces for DG methods.

\subsection{Function spaces associated to DG methods}
In order to conduct an error analysis for the DG method formulated in \eqref{eq:DG_bilinear_1}, we consider the following function spaces and associated norms. They both involve the geometry by $\hat{\Gamma}_h^k$ and $\Gamma_h$, the patchwise manifold and the triangulated original manifold, respectively. We remind that $\Gamma_h$ is just a partition of $\Gamma$ by curved triangles of diameter $h$. We use the notation $H_b^r$ to denote the broken Sobolev spaces
\begin{equation}\label{eq:piecewise_Hk}
    H_b^r(\hat{\Gamma}_h^k):=\set{v:\hat{\Gamma}_h^k \to \R, \text{ and } v|_{\hat{\Gamma}_h^{k,j}}\in H^r(\hat{\Gamma}_h^{k,j}), \text{ for all } j \in J} ,
\end{equation}
where $H^r$ denotes the standard Sobolev space, and $r$ is the order of the differentiation.
The definition in \eqref{eq:piecewise_Hk} can similarly apply to $\Gamma$ by considering each of the corresponding curved triangles $\Gamma_h^j$.
The DG finite element function space is given in an isoparametric way through the parameter domain $\Omega_h$. Precisely we consider piecewise polynomial functions on every parametric triangle patch $\Omega_h^j$, which are then pulled back to the corresponding surface patches:
\begin{equation}
    \hat{S}^{k,l}_h:=\set{v\in L^2(\hat{\Gamma}_h^k): v|_{\hat{\Gamma}_h^{k,j}} = \bar{v}\circ (\pi_h^k)^{-1} \text{ where } \bar{v} \in P^l(\Omega_h^j) \text{ for all } j\in J},
\end{equation}
where $l$ is the polynomial order for function values, and $k$ is the polynomial order for the local patches in $\hat{\Gamma}_h^{k,j}$.
The counterpart on the exact manifold $\Gamma_h$ is denoted by 
\begin{equation}
    S^l_h:=\set{v\in L^2(\Gamma_h): v|_{\Gamma_h^j} = \bar{v}\circ (\pi)^{-1} \text{ where } \bar{v} \in P^l(\Omega_h^j) \text{ for all } j\in J}.
\end{equation}
It is not hard to see that $\hat{S}^{k,l}_h\subset H_b^l(\hat{\Gamma}_h^k)$ and $S^l_h\subset H_b^l(\Gamma_h)$, respectively.
We introduce the following spaces 
\[ V_h^{l}:=H^l(\Gamma_h)+ H_b^l(\Gamma_h) \quad \text{ and }  \quad V_h^{k,l}:= H^l(\hat{\Gamma}_h^k)+ H_b^l(\hat{\Gamma}_h^k).\]
Their norms can characterize the DG spaces, which are often called DG norms. Here we write down the one for the former
\[ \norm{v_h}_{V_h^{l}}^2=\sum_j  \norm{v_h}^2_{H^1(\Gamma^j_h)} + \sum_{e_h\in \mathcal{E}_h} \beta_h  \norm{[v_h]}^2_{L^2(e_h)} \quad \text{ for every } v_h \in V_h^{l}. \]
The one for the latter is similar and thus omitted. For simplicity, we denote
\begin{equation}\label{eq:trace_norm}
    \norm{v_h}_*^2=\sum_{e_h\in \mathcal{E}_h} \beta_h  \norm{[v_h]}_{L^2(e_h)}^2.
\end{equation}
To compare functions defined on $\Gamma_h$ and $\hat{\Gamma}_h^k$, we rely on the pullback and pushforward operations, which are precisely defined as follows:
\begin{equation}\label{eq:pullback}
    T^k_h u_h:=u_h\circ \pi_h^k\circ (\pi)^{-1} .
\end{equation}
Note that the pullback $T^k_h$ is a bijection and both itself and its inverse the pushforward $(T^k_h)^{-1}$ are bounded operators in the sense that:
	\begin{lemma}
		Let $T^k_h$ and its inverse $(T^k_h)^{-1}$ be defined as \eqref{eq:pullback}, then
		\begin{equation}\label{eq:pullback_equiv}
			\norm{T^k_h u_h}_{H^m(\Gamma_h)} \simeq \norm{u_h}_{H^m(\hat{\Gamma}^k_h)}  \quad \text{ for all } 0\leq m \leq l+1.
		\end{equation}
		By $a \simeq b$ we mean that there exist positive constants $C_1$ and $C_2$ such that $a\leq C_1 b$ and $b\leq C_2 a$.
	\end{lemma}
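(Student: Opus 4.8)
The plan is to reduce the claimed norm equivalence on the two curved surfaces to a comparison of two Riemannian metrics over the common planar parameter domain $\Omega_h$. The starting observation is that $T^k_h u_h$ and $u_h$ share the \emph{same} coordinate representation: from the definition \eqref{eq:pullback},
\[
(T^k_h u_h)\circ \pi = u_h\circ \pi^k_h\circ (\pi)^{-1}\circ \pi = u_h\circ \pi^k_h =: \bar{u}_h,
\]
so that $\bar{u}_h$ is at once the pullback of $u_h$ to $\Omega_h$ through $\pi^k_h$ and the pullback of $T^k_h u_h$ to $\Omega_h$ through $\pi$. This common-coordinate identity is the structural fact that makes the estimate essentially metric-theoretic rather than function-theoretic.

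First I would rewrite both Sobolev norms as weighted norms of $\bar u_h$ on $\Omega_h$. Working patch by patch, the intrinsic $H^m$ norm of a function on $\hat{\Gamma}_h^{k,j}$ equals an integral over $\Omega_h^j$ of the covariant derivatives of $\bar u_h$ up to order $m$, contracted with $(g_h^k)^{-1}$ and weighted by $\sqrt{\abs{g_h^k}}$; its coefficients are polynomial in the entries of $g_h^k$, $(g_h^k)^{-1}$ and their derivatives up to order $m-1$ (through the Christoffel symbols). The analogous identity holds for $T^k_h u_h$ on $\Gamma_h^j$ with $g_h^k$ replaced by $g$. For $m=0$ this is just $\int_{\Omega_h^j}\abs{\bar u_h}^2 \sqrt{\abs{g}}\,dA$, and for $m=1$ the gradient term is the quadratic form $(\nabla\bar u_h)^\top g^{-1}\nabla\bar u_h$ appearing in \eqref{eq:gradient}; higher-order terms follow by iterating the covariant derivative. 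Since $\bar u_h$ is the same function in both representations, the equivalence \eqref{eq:pullback_equiv} reduces to the uniform equivalence, over all patches and all $h\le h_0$, of the two metric-weighted $H^m(\Omega_h^j)$ norms.

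Next I would show that the two families of weights are uniformly comparable. By Assumption \ref{assum:geometry} the exact $g$, $g^{-1}$ and $\sqrt{\abs{g}}$ are uniformly bounded above and below independently of $h$, and $\Gamma\in C^{k_0}$ with bounded curvature gives uniform bounds on the derivatives of $g$ up to the required order. For the reconstructed metric, Theorem \ref{thm:geo_error} yields $\norm{g^{-1}(g-g_h^k)}_{L^\infty}\le Ch^{k+1}$ and $\norm{(\sqrt{\abs{g}}-\sqrt{\abs{g_h^k}})/\sqrt{\abs{g}}}_{L^\infty}\le Ch^{k+1}$, so $g_h^k$, $(g_h^k)^{-1}$ and $\sqrt{\abs{g_h^k}}$ are $\mathcal{O}(h^{k+1})$ perturbations of the exact quantities and hence, for $h\le h_0$ small, bounded above and bounded away from degeneracy. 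With two-sided bounds on both weights, the weighted norms are equivalent with a constant depending only on these bounds, uniformly in $j$ and $h$; summing the patchwise equivalences over $j$ gives \eqref{eq:pullback_equiv}, and the argument applies verbatim to $(T_h^k)^{-1}$ after exchanging the roles of $g$ and $g_h^k$.

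The main obstacle is precisely the control of the derivatives of $g_h^k$ (equivalently of $\pi_h^k$) up to order $m-1$ uniformly in $h$: Theorem \ref{thm:geo_error} only delivers the zeroth-order $L^\infty$ closeness of $g_h^k$ to $g$, whereas the $H^m$ norm for $m\ge 2$ also feels the Christoffel symbols, hence derivatives of the metric. I would handle this by noting that $g_h^k=(\partial\pi_h^k)^\top\partial\pi_h^k$ is assembled from the reconstructed polynomial graph, so that Lemma \ref{lem:inter_error} controls $\partial\pi_h^k$ and its deviation from $\partial\pi$, while the $W^{1,\infty}$ agreement of Lemma \ref{lem:aux_appr} together with polynomial inverse estimates on the shape-regular patches of size $h$ furnishes uniform bounds on the remaining derivatives of $p_h^{k,j}$ entering the Christoffel symbols; here $k_0\ge\max\set{k,2}$ guarantees that the exact graph function has enough bounded derivatives to compare against. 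Once these uniform higher-order bounds are secured, the weighted-norm equivalence closes the proof.
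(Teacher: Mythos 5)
The paper states this lemma without any proof, so there is no argument of the authors' to compare against; your proposal supplies the missing proof, and it does so correctly and entirely within the paper's own framework. The structural identity you start from --- that $T_h^k u_h$ and $u_h$ have the \emph{same} coordinate representative $\bar u_h = u_h \circ \pi_h^k$ on $\Omega_h$, so that the two norms differ only through the metric weights $g$ versus $g_h^k$ --- is exactly the mechanism the paper itself exploits later (in the proof of Theorem \ref{thm:convergence} and Appendix \ref{app:DG_bilinear}, where $\mathcal{A}_h^{l}(T_h^k u_h^{k,l}, v_h)$ and $\mathcal{A}_h^{k,l}(u_h^{k,l}, (T_h^k)^{-1} v_h)$ are both written over $\Omega_h$ with identical coordinate functions and different metrics). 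Your reduction to uniform two-sided comparability of $g$, $(g)^{-1}$, $\sqrt{\abs{g}}$ with $g_h^k$, $(g_h^k)^{-1}$, $\sqrt{\abs{g_h^k}}$ is sound; in fact for $m\le 1$ the weaker Jacobian estimate of Lemma \ref{lem:inter_error} already suffices, and Theorem \ref{thm:geo_error} is more than is needed. Your treatment of the genuinely delicate point --- that $H^m$ for $m\ge 2$ sees derivatives of the metric through the Christoffel symbols, which no stated estimate of the paper controls --- via inverse estimates for the degree-$k$ polynomial patches against a smooth interpolant is also correct: it yields $\norm{\partial^{j}\pi_h^k}_{L^\infty} \le C$ uniformly in $h$ for $2\le j\le k$ (derivatives of order above $k$ vanish), which is what the equivalence requires.

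One caveat you should state more carefully: your closing claim that $k_0 \ge \max\set{k,2}$ "guarantees that the exact graph function has enough bounded derivatives" is only true when $l+1 \le \max\set{k,2}$. For $m$ up to $l+1$ the exact metric $g$ must be controlled in $W^{m-1,\infty}$, i.e., $\pi$ must have bounded derivatives of order up to $l+1$; if $l > k$ (a case the paper allows, since $k$ and $l$ are independent and Theorem \ref{thm:convergence} involves $\min\set{k,l}$), Assumption \ref{assum:geometry} does not formally provide this smoothness. This is a gap in the paper's hypotheses rather than in your argument --- the lemma as stated implicitly presupposes $\Gamma \in C^{l+1}$ --- but a complete write-up should either add that hypothesis or restrict the range of $m$ to $\min\set{k_0, l+1}$.
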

	
	
	\subsection{DG methods for PDEs on curved domain}
 
	We shall focus on an interior penalty discontinuous Galerkin method (IPDG) for  PDEs on patchwise manifolds.
	The standard formulations of DG methods in a surface setting have been studied in \cite{DedMadSti13,AntDedMadStaStiVer15}, a bilinear form of which is recalled as follows: For $u_h,v_h\in V_h^l$,
	\begin{equation}\label{eq:DG_ambient}
	\begin{aligned}
	\mathcal{A}_h^{k,l}(u_h,v_h):=&\sum_{j} \int_{\hat{\Gamma}^{k,j}_h} \left(\nabla_{g^k_h} u_h \cdot \nabla_{g^k_h} v_h  + u_h v_h  \right)dA_{g^k_h}\\
	- \sum_{\hat{e}^{k}_h \in \hat{ \mathcal{E}}_h} &\int_{ \hat{e}^{k}_h}  \left( \{(\nabla_{g^k_h} u_h\cdot n_h)\} [v_h] + \{(\nabla_{g^k_h} v_h\cdot n_h)\}   [u_h]   -  \frac{\beta}{h} [u_h][v_h]  \right)dE_{g^k_h}.
	\end{aligned}
	\end{equation}
	Here $\hat{ \mathcal{E}}_h$ denotes the edge set containing all the edges from the curved triangular faces in $\hat{\Gamma}_h^k$, $\beta$ denotes the  stabilization parameter, and $n_h$ is the outer unit normal vector orthogonal to the edges $\hat{e}^{k}_h$ and tangential to $\hat{\Gamma}_h^k$.
	For quantity $q$ defined on $(\hat{e}_h^{k,j})^\pm \in \partial \hat{\Gamma}_h^{k,j}$, the averaging is defined to be 
	$\set{q}:=\frac{1}{2}(q^+ +q^-)$ and the jump  is defined to be $[q]:=q^+ -q^-$. 
	When the two edges do not coincide, the averaging and jump functions have to be understood by pulling back them onto the parameter domain, i.e., $\set{\bar{q}}=\frac{1}{2}(\bar{q}^+ +\bar{q}^-)$, and $[\bar{q}]:=\bar{q}^+ - \bar{q}^-$. Here $\bar{q} = q\circ \pi_h^k$.
	The main difference  is that  in \cite{AntDedMadStaStiVer15}, the exact manifolds are assumed to be known and $\hat{\Gamma}_h^k$ is assumed to be piecewise polynomial interpolation of $\Gamma_h$.
	However, different situations are confronted in general for patchwise manifolds.
	It might be the case that the patches in $\hat{\Gamma}_h^k$ share no common edges with their neighbors. This is usually the case resulting from the local geometric reconstruction of the manifolds through, e.g., point clouds. Therefore the DG methods in the literature can not be applied directly. In this regard, we need to write down a more general computational formula.
	To do so, we pull back all the calculations onto the parametric domain $\Omega_h$.
	Then we have the following bilinear form for general problems on manifolds: For $u_h, v_h \in V_h^{k,l}$,
	\begin{equation}\label{eq:DG_bilinear_1}
	\begin{aligned}
	& \mathcal{A}_h^{k,l}(u_h,v_h):= \\
	&\sum_j \int_{\Omega_h^j}\left(( \nabla \bar{u}_h )^\top (g_h^k)^{-1}\nabla \bar{v}_h +  \bar{u}_h \bar{v}_h \right) \sqrt{\abs{g_h^k}} dA+ \frac{\beta}{h}  \sum_{e_h\in \bar{\mathcal{E}}_h} \int_{e_h} [\bar{u}_h][\bar{v}_h ] \{ l_{g_h^k}\}dE\\
	& -\sum_{e_h\in \bar{\mathcal{E}}_h} \int_{e_h} \{ ( \nabla \bar{u}_h )^\top (g_h^k)^{-1}(\partial \pi_h^k)^\top n_h l_{g_h^k} \} [\bar{v}_h]+\{ ( \nabla \bar{v}_h )^\top (g_h^k)^{-1}(\partial \pi_h^k)^\top n_h l_{g_h^k}\} [\bar{u}_h ] dE .
	\end{aligned}
	\end{equation}
 Here and in the following, we use $\bar{w}:\Omega_h\to \R$ to denote the pullback of $w$ defined on $\Gamma_h$ or $\hat{\Gamma}_h^k$.
	Note that in the parametric domain, $e_h$ between two neighbored triangles are unique, which is similar to the standard DG methods. However, different metrics apply to the common edge. This has been reflected in the bilinear form \eqref{eq:DG_bilinear_1} where $l_{g_h^k}^+$ and $l_{g_h^k}^-$ might be different.
	
	In order to have a comparison with the DG bilinear form on the exact manifolds, we also consider the following  parametric presentation on $\Gamma_h$:  For $u_h,v_h\in V_h^l$,
	\begin{equation}\label{eq:DG_bilinear_1_exact}
	\begin{aligned}
	\mathcal{A}_h^{l}(u_h,v_h):= &\sum_j \int_{\Omega_h^j}\left(( \nabla \bar{u}_h )^\top g^{-1}\nabla \bar{v}_h + \bar{u}_h \bar{v}_h \right)\sqrt{\abs{g}} dA+ \frac{\beta}{h}  \sum_{e_h\in \bar{\mathcal{E}}_h} \int_{e_h} [\bar{u}_h][\bar{v}_h ]  l_{g} dE\\
	-& \sum_{e_h\in \bar{\mathcal{E}}_h} \int_{e_h} \left(\{ ( \nabla \bar{u}_h)^\top g^{-1}(\partial \pi)^\top n\}[\bar{v}_h]+ \{( \nabla \bar{v}_h )^\top g^{-1}(\partial \pi)^\top n\} [\bar{u}_h ] \right) l_{g} dE .
	\end{aligned}
	\end{equation}
	In the following, we denote $\beta_h:=\frac{\beta}{h}$.
	Note that the difference between \eqref{eq:DG_bilinear_1_exact} and \eqref{eq:DG_bilinear_1} is that the edge metric scaling $l_g$ is the same on every shared edge while $l_{g_h^k}$ in general might be not coincide. In that case the formula in \eqref{eq:DG_bilinear_1_exact} is just a rewriting of the formula in \eqref{eq:DG_ambient} by replacing the metric tensor $g$ with $g_h^k$.


	\subsection{Error analysis for Laplace-Beltrami equation}
	To develop the convergence analysis, we follow the general strategy in \cite{AntDedMadStaStiVer15,DedMadSti13}, but mostly emphasize the geometric error analysis part.
	It shows that the geometric error are exactly caused by the approximation of the metric tensor.
	We shall refer to a few lemmas from \cite{AntDedMadStaStiVer15} which are proposed for DG methods on continuous surfaces.
	\begin{lemma}[DG trace inequality]\label{lem:DG_trace}
		For every $e^j_h\in \partial \Gamma_h^j$, and for sufficiently small $h$, we have 
		\begin{equation}\label{eq:trace_ineq}
		\norm{v_h}_{L^2(e^j_h)}^2\lesssim h^{-1}\norm{v_h}_{L^2(\Gamma_h^j)}^2 \; \text{ and } \; \norm{\nabla_g v_h}_{L^2(e^j_h)}^2\lesssim h^{-1} \norm{\nabla_g v_h}_{L^2(\Gamma_h^j)}^2.
		\end{equation}
	\end{lemma}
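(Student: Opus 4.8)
The plan is to reduce \eqref{eq:trace_ineq} to the classical discrete trace inequality for polynomials on a fixed reference triangle, and then transport the estimate first by affine scaling and then through the parametrization $\pi$ using the metric bounds already available. Since every $v_h\in S_h^l$ restricted to $\Gamma_h^j$ is of the form $\bar v\circ\pi^{-1}$ with $\bar v\in P^l(\Omega_h^j)$, I would first rewrite both sides of the scalar inequality as integrals over the planar parameter triangle $\Omega_h^j$ and its edge $e_h$, namely
\[
\norm{v_h}_{L^2(e_h^j)}^2=\int_{e_h}\abs{\bar v}^2 l_g\,dE,\qquad
\norm{v_h}_{L^2(\Gamma_h^j)}^2=\int_{\Omega_h^j}\abs{\bar v}^2\sqrt{\abs{g}}\,dA.
\]

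Because $\Gamma$ has uniformly bounded curvature (Assumption \ref{assum:geometry}), the area factor $\sqrt{\abs{g}}$ and the edge scaling $l_g$ are both bounded above and away from zero uniformly in $h$ and $j$; hence these weighted norms are equivalent, up to $h$-independent constants, to the unweighted norms $\norm{\bar v}_{L^2(e_h)}$ and $\norm{\bar v}_{L^2(\Omega_h^j)}$. This reduces the claim to the purely planar statement $\norm{\bar v}_{L^2(e_h)}^2\le Ch^{-1}\norm{\bar v}_{L^2(\Omega_h^j)}^2$ for $\bar v\in P^l(\Omega_h^j)$.

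For the planar statement I would pass to the reference triangle $\hat K$ via the affine map $F\colon\hat K\to\Omega_h^j$, $F(\hat x)=B\hat x+b$. On $\hat K$ the map $\hat v\mapsto\norm{\hat v}_{L^2(\partial\hat K)}$ is a seminorm on the finite-dimensional space $P^l(\hat K)$ and is therefore continuous with respect to, hence dominated by, the norm $\norm{\hat v}_{L^2(\hat K)}$; this yields $\norm{\hat v}_{L^2(\hat e)}^2\le C\norm{\hat v}_{L^2(\hat K)}^2$ with $C$ depending only on $l$. Shape regularity gives $\abs{\det B}\sim h^2$ and edge-length scaling $\sim h$, so transforming back produces the factor $h\cdot h^{-2}=h^{-1}$ and establishes the planar estimate with a uniform constant.

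For the gradient inequality I would use the coordinate identity $\abs{\nabla_g v_h}^2\circ\pi=(\nabla\bar v)^\top g^{-1}\nabla\bar v$, which follows by combining \eqref{eq:gradient} with $(\partial\pi)^\top\partial\pi=g$ from \eqref{eq:metric_tensor}. Then $\norm{\nabla_g v_h}_{L^2(\Gamma_h^j)}^2$ and $\norm{\nabla_g v_h}_{L^2(e_h^j)}^2$ become $g^{-1}$-weighted $L^2$ norms of $\nabla\bar v$; since $g^{-1}$ is uniformly positive definite and bounded, these are equivalent to the unweighted norms of $\nabla\bar v$, and $\nabla\bar v\in (P^{l-1}(\Omega_h^j))^2$ is again polynomial, so the reference-element-plus-scaling argument applies componentwise. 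The only step requiring care — and the mildly technical point — is verifying the uniform two-sided bounds on $\sqrt{\abs{g}}$, $l_g$ and $g^{-1}$; these follow directly from the bounded-curvature hypothesis, so there is no genuine obstacle beyond bookkeeping, the essential content being the finite-dimensional norm equivalence on $\hat K$.
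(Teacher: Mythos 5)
Your proof is correct, but there is no internal proof to compare it against: for this lemma the paper offers no argument at all and simply defers to \cite{AntDedMadStaStiVer15}. Your argument is therefore a genuine addition rather than a variant. It is also well matched to the paper's framework: the cited reference proves such trace inequalities for interpolating polynomial surfaces by lifting functions between the discrete surface and $\Gamma$ through the closest-point projection, which presupposes exact knowledge of the embedding, whereas you stay entirely inside the paper's intrinsic setting --- pull everything back to the planar parameter triangle $\Omega_h^j$, use the uniform two-sided bounds on $\sqrt{\abs{g}}$, $l_g$ and $g^{-1}$ (which do follow from the bounded-curvature assumption together with the $\mathcal{O}(h^2)$ closeness and $\mathcal{O}(h)$ normal-deviation estimates of Assumption \ref{assum:geometry} and Lemma \ref{lem:normal_proj}), and then invoke the standard reference-element scaling argument for polynomials. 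Your identity $\abs{\nabla_g v_h}^2\circ\pi=(\nabla\bar v)^\top g^{-1}\nabla\bar v$, obtained from \eqref{eq:gradient} and \eqref{eq:metric_tensor}, is exactly right and is what makes the gradient case reduce to the componentwise planar estimate for $\nabla\bar v\in\left(P^{l-1}(\Omega_h^j)\right)^2$.

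One point worth making explicit. The estimate \eqref{eq:trace_ineq} is an inverse-type inequality, so it can only hold on a finite-dimensional space; you correctly restrict to $v_h\in S_h^l$, and this restriction is essential, not cosmetic. Note, however, that the paper later applies \eqref{eq:trace_ineq} to $I_h^l u - u$ in Step 2 of the proof of Theorem \ref{thm:convergence}, and that function is not in $S_h^l$; for that application one needs instead the continuous scaled trace inequality
\begin{equation*}
\norm{v}_{L^2(\partial\Gamma_h^j)}^2\leq C\left(h^{-1}\norm{v}_{L^2(\Gamma_h^j)}^2+h\norm{\nabla_g v}_{L^2(\Gamma_h^j)}^2\right),
\end{equation*}
valid for all $v\in H^1(\Gamma_h^j)$, combined with the interpolation error bounds. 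That is an imprecision in the paper's usage of the lemma, not a gap in your proof.
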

	\begin{lemma}[Boundedness and coercivity of DG bilinear form]\label{lem:DG_coerciv}
		The DG bilinear form $\mathcal{A}_h^l$ defined on $\Gamma_h$ is bounded and coercive with respect to DG norm provided that the  stabilization parameter $\beta$ is large enough, i.e., for all $u_h, v_h \in V_h^l$
		\begin{equation}
		\mathcal{A}_h^l(u_h,v_h)\lesssim \norm{u_h}_{V_h^l}\norm{v_h}_{V_h^l}\quad \text{ and }\quad \mathcal{A}_h^l(u_h,u_h)\gtrsim \norm{u_h}^2_{V_h^l}.
		\end{equation}
	\end{lemma}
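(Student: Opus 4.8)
The plan is to split the bilinear form \eqref{eq:DG_bilinear_1_exact} into its three constituents — the elementwise volume term, the interior penalty term scaled by $\beta_h$, and the symmetric consistency (flux) terms — and to estimate each against the DG norm $\norm{\cdot}_{V_h^l}$. Throughout, I would exploit that the smoothness and bounded curvature of $\Gamma$ (Assumption \ref{assum:geometry}) guarantee that $g^{-1}$, $\sqrt{\abs{g}}$, and the edge scaling $l_g$ are uniformly bounded above and bounded away from zero, independently of $h$; equivalently the parametric integrals over $\Omega_h^j$ are comparable to the intrinsic ones over $\Gamma_h^j$ entering $\norm{\cdot}_{V_h^l}$, so I may freely work with the surface gradients $\nabla_g$.

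For boundedness, the volume term is controlled by Cauchy--Schwarz together with the spectral bounds on $g^{-1}$, giving a bound by $\sum_j \norm{u_h}_{H^1(\Gamma_h^j)}\norm{v_h}_{H^1(\Gamma_h^j)}$, and the penalty term is handled by Cauchy--Schwarz edgewise, bounded by $\norm{u_h}_*\norm{v_h}_*$ after absorbing $l_g$. The consistency terms are the delicate ones: on each edge I would apply Cauchy--Schwarz to separate the averaged flux from the jump, then insert the weights $h/\beta$ and $\beta/h$ and use a weighted discrete Cauchy--Schwarz across edges,
\[
\sum_{e_h}\norm{\{\nabla_g u_h\cdot n\}}_{L^2(e_h)}\norm{[v_h]}_{L^2(e_h)} \leq \Big(\sum_{e_h}\tfrac{h}{\beta}\norm{\{\nabla_g u_h\cdot n\}}_{L^2(e_h)}^2\Big)^{1/2}\norm{v_h}_*.
\]
The DG trace inequality \eqref{eq:trace_ineq} then converts the edge gradient norms into elementwise volume norms, the factor $h$ cancelling the $h^{-1}$ from the trace bound, so the first factor is $\le C\beta^{-1/2}\big(\sum_j\norm{\nabla_g u_h}_{L^2(\Gamma_h^j)}^2\big)^{1/2}\le C'\norm{u_h}_{V_h^l}$. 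Combining the three estimates yields boundedness.

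For coercivity I would set $v_h=u_h$. The volume term is bounded below by $c_1\sum_j\norm{u_h}_{H^1(\Gamma_h^j)}^2$ from the uniform positive definiteness of $g^{-1}$ and the lower bound on $\sqrt{\abs{g}}$, while the penalty term is bounded below by $c_2\norm{u_h}_*^2$ from the lower bound on $l_g$. The two consistency terms coalesce into one cross term, which I would estimate by Young's inequality $2ab\le \epsilon a^2+\epsilon^{-1}b^2$ using the same $h/\beta$, $\beta/h$ weighting: the gradient part is $\le \epsilon C\beta^{-1}\sum_j\norm{\nabla_g u_h}_{L^2(\Gamma_h^j)}^2$ (again via \eqref{eq:trace_ineq}), and the jump part is $\le \epsilon^{-1}\norm{u_h}_*^2$.

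The main obstacle — and the only place the hypothesis that $\beta$ be large is used — is the two-stage balancing of constants. First I would fix $\epsilon = 2/c_2$, so that the jump contribution $\epsilon^{-1}\norm{u_h}_*^2$ consumes at most half of the penalty term $c_2\norm{u_h}_*^2$; then I would require $\beta > 4C/(c_1 c_2)$, so that $\epsilon C\beta^{-1}$ consumes at most half of the gradient part of the volume term. Since the cross term never touches the $L^2$ part of the volume term, that part survives intact, and collecting the surviving fractions of the gradient, $L^2$, and penalty contributions gives $\mathcal{A}_h^l(u_h,u_h)\ge C\norm{u_h}_{V_h^l}^2$. The delicate point throughout is to verify that $c_1,c_2$ and the trace constant in \eqref{eq:trace_ineq}, together with the metric bounds, are genuinely $h$-independent — which is precisely what the bounded-curvature assumption and Lemma \ref{lem:DG_trace} supply.
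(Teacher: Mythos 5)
Your proof is correct in substance, but note that the paper itself never proves this lemma: it is stated as a citation, imported from the surface DG analysis of Antonietti et al.\ \cite{AntDedMadStaStiVer15} (``We shall refer to a few lemmas from \cite{AntDedMadStaStiVer15}''). So your argument is not redundant with anything in the paper --- rather, it reconstructs the standard interior penalty argument (going back to \cite{Arn82}) that the cited reference carries out: Cauchy--Schwarz on the volume and penalty terms, a weighted Cauchy--Schwarz on the consistency terms with the $h/\beta$ versus $\beta/h$ split, conversion of edge gradient norms to element norms via the inverse trace inequality of Lemma \ref{lem:DG_trace}, and a two-stage Young-inequality balancing that makes explicit exactly how large $\beta$ must be. What the citation buys the paper is brevity; what your proof buys is transparency about the $h$-uniformity of the constants coming from the metric bounds ($g^{-1}$, $\sqrt{\abs{g}}$, $l_g$ bounded above and below by Assumption \ref{assum:geometry}), which is the only point where the surface setting differs from the planar IPDG analysis. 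Two caveats you should record. First, your boundedness argument relies on the $h^{-1}$-scaled trace inequality \eqref{eq:trace_ineq}, which is an inverse estimate valid for piecewise polynomials; hence continuity in the stated form holds on the discrete space $S_h^l$ (and for pullbacks $T_h^k \hat S_h^{k,l}$, which land in $S_h^l$), not on all of $V_h^l = H^l(\Gamma_h) + H_b^l(\Gamma_h)$ --- for general broken Sobolev functions one must either augment the DG norm with the flux term $\sum_{e_h} h\norm{\{\nabla_g \cdot n\}}_{L^2(e_h)}^2$ or invoke elementwise $H^2$ regularity; the paper's own later use of boundedness on $z - I_h^l z$ with $z \in H^2(\Gamma_h)$ quietly needs this refinement too. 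Second, your argument yields $\mathcal{A}_h^l(u_h,v_h) \leq C\norm{u_h}_{V_h^l}\norm{v_h}_{V_h^l}$ with a constant $C$ depending on the metric and trace constants, not the constant-free inequality literally printed in the lemma; the latter is a typographical looseness of the paper, not a defect of your proof.
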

	We emphasis that $\mathcal{A}_h^l(\cdot,\cdot)$ is the DG formulation of continuous surfaces, which is used for the error analysis in the following.
	
	With the above preparation, we are now ready to present our main result on the error bounds of the IPDG method.  
	\begin{theorem}\label{thm:convergence}
		Let $\Gamma_h$ be a partition of a smooth manifold, and $\hat{\Gamma}_h^k$ be a $k$-th order patchwise manifold which approximate $\Gamma_{h}$. Let $u^{k,l}_h\in \hat{S}^{k,l}_h$ be the solution by DG discretization of \eqref{eq:weak_Lap_Bel}, and  $u\in H^{l+1}(\Gamma_h) $ be the solution of \eqref{eq:Lap_Bel}.
		Then we have the following convergence result:
		\begin{equation}\label{eq:convergence}
		\norm{\hat{u}^{k,l}_h-u}_{L^2(\Gamma_h)} + h \norm{\hat{u}^{k,l}_h -u}_{V_h^{l}}\lesssim h^{\min\set{k,l}+1}(\norm{f}_{L^2(\Gamma_h)}+ \norm{u}_{H^{l+1}(\Gamma_h)}),
		\end{equation}
		where $\hat{u}^{k,l}_h:=T^k_h u^{k,l}_h$  is the pullback of the function $u^{k,l}_h$ from $\hat{\Gamma}_h^k$ to $\Gamma_h$.
	\end{theorem}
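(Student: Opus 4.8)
The plan is to recast everything over the common planar parameter domain $\Omega_h$ and to treat the problem as a DG method on $\Gamma_h$ subject to a \emph{variational crime} of purely geometric origin. Using the pullback $T^k_h$ from \eqref{eq:pullback}, the pulled-back discrete solution $\hat u^{k,l}_h$ and the exact solution $u$ share the parameter domain but are governed by two bilinear forms that differ only through their metric tensors: $\hat u^{k,l}_h$ satisfies $\mathcal{A}_h^{k,l}(\hat u^{k,l}_h,v_h)=(f,v_h)_{g^k_h}$ (the form \eqref{eq:DG_bilinear_1} carrying $g^k_h$), while $u$ is consistent with $\mathcal{A}_h^{l}(u,v_h)=(f,v_h)_{g}$ (the form \eqref{eq:DG_bilinear_1_exact} carrying $g$). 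I would first establish an a priori stability bound $\norm{\hat u^{k,l}_h}_{V_h^l}\le C\norm{f}_{L^2(\Gamma_h)}$, which follows from coercivity of $\mathcal{A}_h^l$ (Lemma \ref{lem:DG_coerciv}) together with the fact that, for $h$ small, $\mathcal{A}_h^{k,l}$ is an $\mathcal{O}(h^{k+1})$ perturbation of $\mathcal{A}_h^l$ by Theorem \ref{thm:geo_error}.

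For the energy estimate I would split $\hat u^{k,l}_h-u=\theta+\rho$ with $\theta=\hat u^{k,l}_h-u_I$ and $\rho=u_I-u$, where $u_I\in S^l_h$ is the isoparametric interpolant of $u$. Coercivity gives $C\norm{\theta}_{V_h^l}^2\le \mathcal{A}_h^l(\theta,\theta)$, and substituting the two problems above yields $\mathcal{A}_h^l(\theta,\theta)=\bigl[\mathcal{A}_h^l-\mathcal{A}_h^{k,l}\bigr](\hat u^{k,l}_h,\theta)+(f,\theta)_{g^k_h-g}-\mathcal{A}_h^l(\rho,\theta)$. The last term is bounded by $C h^{l}\norm{u}_{H^{l+1}(\Gamma_h)}\norm{\theta}_{V_h^l}$ via boundedness of $\mathcal{A}_h^l$ and standard interpolation; the data term by $Ch^{k+1}\norm{f}_{L^2}\norm{\theta}_{V_h^l}$ through the second bound of Theorem \ref{thm:geo_error}; and the geometric consistency term by the remaining bounds of Theorem \ref{thm:geo_error} combined with the stability bound. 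Dividing out $\norm{\theta}_{V_h^l}$ and adding the interpolation bound for $\rho$ gives $\norm{\hat u^{k,l}_h-u}_{V_h^l}\le Ch^{\min\set{k,l}}(\norm{f}+\norm{u}_{H^{l+1}})$.

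The $L^2$ estimate I would obtain by an Aubin--Nitsche duality argument. Let $z$ solve $-\Delta_g z+z=\hat u^{k,l}_h-u$ with the elliptic regularity bound $\norm{z}_{H^2(\Gamma_h)}\le C\norm{\hat u^{k,l}_h-u}_{L^2}$. Writing $\norm{\hat u^{k,l}_h-u}_{L^2}^2=\mathcal{A}_h^l(\hat u^{k,l}_h-u,z)$ and inserting the interpolant $z_I$, the term $\mathcal{A}_h^l(\hat u^{k,l}_h-u,z-z_I)$ gains one power of $h$ from the interpolation of the smooth dual, contributing $Ch\,\norm{\hat u^{k,l}_h-u}_{V_h^l}\norm{z}_{H^2}$, i.e.\ $\mathcal{O}(h^{\min\set{k,l}+1})$ after the energy bound. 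The remaining geometric and data terms, now tested against the interpolant of a \emph{smooth} function, are the place where continuity of $z$ must be exploited to recover the extra order; together they are to be kept at $\mathcal{O}(h^{\min\set{k,l}+1})$, which closes the $L^2$ bound.

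The hard part is the geometric consistency term $[\mathcal{A}_h^l-\mathcal{A}_h^{k,l}]$ on the \emph{edges}. The interior (volume) contributions are immediate: $\norm{g^{-1}(g-g^k_h)}_{L^\infty}$ and the volume-element ratio are $\mathcal{O}(h^{k+1})$ by Theorem \ref{thm:geo_error}, so they pair cleanly with the $H^1$ parts of the DG norm. On the edges two difficulties compound. First, the edge measure $l_{g^k_h}$ is genuinely double-valued across a parametric edge $e_h$, so one must expand $[\bar v_h l_{g^k_h}]=\set{l_{g^k_h}}[\bar v_h]+[l_{g^k_h}]\set{\bar v_h}$ and control the second, average-valued piece with $[l_{g^k_h}]=\mathcal{O}(h^{k+1})$ from the third bound of Theorem \ref{thm:geo_error}, balancing the penalty weight $\beta_h=\beta/h$ against the trace factors through Lemma \ref{lem:DG_trace}. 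Second, and most delicate, a naive factorwise estimate of the flux coefficient would only see the $\mathcal{O}(h^{k})$ Jacobian error; the point is to group the conormal $(g^k_h)^{-1}(\partial\pi^k_h)^\top n_h$ together with $l_{g^k_h}$ into a single conormal-measure object and invoke the combined fourth (and third) bound of Theorem \ref{thm:geo_error}, which delivers the genuine $\mathcal{O}(h^{k+1})$ accuracy rather than $\mathcal{O}(h^{k})$ --- exactly the discrepancy phenomenon highlighted in Remark \ref{rem:discrepancy}. Carrying this grouping through all edge terms, and using the smoothness of the dual solution in the duality step, is what ultimately yields the stated rates $h^{\min\set{k,l}}$ in the DG norm and $h^{\min\set{k,l}+1}$ in $L^2$.
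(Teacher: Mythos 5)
Your proposal is correct and follows essentially the same route as the paper's own proof: the same interpolant splitting combined with the coercivity of $\mathcal{A}_h^l$ from Lemma~\ref{lem:DG_coerciv}, the same geometric-consistency residual $\mathcal{R}(v_h)$ (the difference of the two pulled-back bilinear forms plus the data term carrying $1-\sigma_A$) estimated term by term through Theorem~\ref{thm:geo_error}, and the same Aubin--Nitsche duality argument with the dual problem $-\Delta_g z + z = \hat{u}^{k,l}_h-u$ for the $L^2$ bound. The only difference is one of emphasis: your explicit expansion $[\bar v_h l_{g^k_h}]=\{l_{g^k_h}\}[\bar v_h]+[l_{g^k_h}]\{\bar v_h\}$ and the grouping of the conormal with the edge measure make explicit the delicate edge-term details that the paper's Appendix~\ref{app:DG_bilinear} treats only summarily.
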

	\begin{proof}
		We sketch some of the major steps for a proof strategy, while highlight the part which is different to the one in \cite{AntDedMadStaStiVer15}. We first show the estimate on the DG norm of the error $ \norm{\hat{u}^{k,l}_h -u}_{V_h^{l}}$. After that we proceed with the $L^2(\Gamma_h)$ norm error estimate. 
		
		\textbf{Step 1:} Using triangle inequality, this can be decomposed into:
		\begin{equation}\label{eq:DG_norm_term}
		\norm{\hat{u}^{k,l}_h-u}_{V_h^{l}} \leq \norm{I_h^l u-u}_{V_h^{l}} + \norm{I_h^l u - \hat{u}^{k,l}_h}_{V_h^{l}},
		\end{equation}
		where $I_h^l$ is the operator interpolating the function value of $u\circ \pi$ using $l$th order polynomial on parameter domain $\Omega_{h}$ and then pull back to $\Gamma_h$.
		
		\textbf{Step 2:} The first term on the right hand side of \eqref{eq:DG_norm_term} can be estimated using interpolation error estimate and the trace inequality\eqref{eq:trace_ineq}. This gives 
		\[  \norm{I_h^l u-u}_{V_h^{l}} \lesssim h^{l} \norm{u}_{H^{l+1}(\Gamma_h)}. \]
		Be careful here we have interpolation both on the triangular faces and also on the edges.
		
		\textbf{Step 3:} The second term on the right hand side of \eqref{eq:DG_norm_term} needs a bit more work.
		Using the stability estimate in Lemma \ref{lem:DG_coerciv}
		\[\norm{I_h^l u - \hat{u}^{k,l}_h}^2_{V_h^{l}} \lesssim \mathcal{A}_h^{l}(I_h^l u - \hat{u}^{k,l}_h, I_h^l u - \hat{u}^{k,l}_h) ,\]
		where $\mathcal{A}_h^l(\cdot,\cdot)$ is the bilinear forms induced by the DG discretization of the weak formulation of the PDEs, e.g., Laplace-Beltrami problem. Note that this is formulated on the underlying exact manifold $\Gamma_h$.
		
		\textbf{Step 4:} The key point of the proof is to estimate $\mathcal{A}_h^{l}(I_h^l u - \hat{u}^{k,l}_h, I_h^l u - \hat{u}^{k,l}_h) $.  Recall
		\begin{equation}\label{eq:bilinear_estimate}
		\mathcal{A}_h^{l}(I_h^l u - \hat{u}^{k,l}_h, I_h^l u - \hat{u}^{k,l}_h)= \mathcal{A}_h^{l}(I_h^l u - u, I_h^l u - \hat{u}^{k,l}_h)+ \mathcal{A}_h^{l}(u - \hat{u}^{k,l}_h, I_h^l u - \hat{u}^{k,l}_h).
		\end{equation}
		The first term of the right hand side can be bounded by interpolation estimate and Lemma \ref{lem:DG_coerciv}:
		\begin{equation}\label{eq:bilinear_interpolate}
		\mathcal{A}_h^{l}(I_h^l u - u, v_h) \lesssim h^{l}\norm{u}_{H^{l+1}(\Gamma_h)}\norm{v_h}_{V_h^{l}},
		\end{equation}
		for $v_h\in V_h^l$.
		The second term on the right-hand side of \eqref{eq:bilinear_estimate} looks like a Galerkin orthogonality residual in conformal FEM.
		In order to analyze it, we consider the the weak formulation and the DG approximation of the Laplace-Beltrami equation, which consists of
		\[ \mathcal{A}_h^{l}(u, v_h)=\sum_j \int_{\Gamma_h^j} f  v_h dA_g \]
		and 
		\[ \mathcal{A}_h^{k,l}(u^{k,l}_h,  (T_h^k)^{-1} v_h)=\sum_j \int_{\hat{\Gamma}^{k,j}_h} (T_h^k)^{-1} f  (T_h^k)^{-1} v_h dA_{g^k_h},\]
		where $\mathcal{A}_h^{l}(\cdot,\cdot)$ and $\mathcal{A}_h^{k,l}(\cdot,\cdot)$ denotes the DG bilinear forms on $\Gamma_h$ and $\hat{\Gamma}^k_h$, respectively. Their precise forms have been given in \eqref{eq:DG_bilinear_1_exact} and \eqref{eq:DG_bilinear_1}.
		
		Note that 
		\[\sum_j \int_{\hat{\Gamma}^{k,j}_h} (T_h^k)^{-1} f  (T_h^k)^{-1} v_h dA_{g^k_h}=\sum_j \int_{\Gamma_h^j}  f   v_h \sigma_A dA_g,\]
		where $\sigma_A =\frac{dA_{g^k_h}}{dA_g}=\frac{\sqrt{\abs{g^k_h}}}{\sqrt{\abs{g}}}$. 
		This shows
		\[   \mathcal{A}_h^{l}(u, v_h)- \mathcal{A}_h^{k,l}(u^{k,l}_h,  (T_h^k)^{-1} v_h)= \sum_j \int_{\Gamma_h^j}  f   v_h (1-\sigma_A) dA_g.   \]
		Now we consider to calculate the quantities on the parametric domain $\Omega_h$
		\[ 
		\begin{aligned}
		&\mathcal{A}_h^{l} (T_h^k u^{k,l}_h,   v_h)\\
		=&\sum_j \int_{\Omega_h^j} \left(  (\nabla \bar{u}^{k,l}_h )^\top g^{-1}\nabla \bar{v}_h+ \bar{u}^{k,l}_h \bar{v}_h\right) \sqrt{\abs{g}} dA+ \beta_h  \sum_{\bar{e}_h\in \bar{\mathcal{E}}_h} \int_{\bar{e}_h} [\bar{u}^{k,l}_h][\bar{v}_h ]  l_{g} dE\\
		&-\sum_{\bar{e}_h\in \bar{\mathcal{E}}_h} \int_{\bar{e}_h} \left(\{ ( \nabla \bar{u}^{k,l}_h)^\top g^{-1}\partial \pi^\top n\}[\bar{v}_h]+ \{( \nabla \bar{v}_h )^\top g^{-1}\partial \pi^\top n\} [\bar{u}^{k,l}_h ] \right) l_{g} dE 
		\end{aligned}
		\]
		while 
		\[ 
		\begin{aligned}
		&\mathcal{A}_h^{k,l} ( u^{k,l}_h, (T_h^k)^{-1} v_h)\\
		&=\sum_j  \int_{\Omega_h^j} \left((\nabla \bar{u}^{k,l}_h)^\top (g^k_h)^{-1} \nabla \bar{v}_h + \bar{u}^{k,l}_h \bar{v}_h\right) \sqrt{\abs{g^k_h}}dA  + \beta_h \sum_{\bar{e}_h\in \bar{\mathcal{E}}_h} \int_{ \bar{e}_h} [\bar{u}^{k,l}_h] [\bar{v}_h] \{l_{g^k_h} \}dE -  \\
		&\sum_{\bar{e}_h\in \bar{\mathcal{E}}_h} \int_{\bar{e}_h} \{(\nabla \bar{u}^{k,l}_h)^\top (g^k_h)^{-1}  (\partial \pi_h^k)^\top n_h l_{g^k_h}\}[ \bar{v}_h  ] + \{(\nabla \bar{v}_h)^\top (g^k_h)^{-1}  (\partial \pi_h^k)^\top n_h l_{g^k_h} \} [\bar{u}^{k,l}_h  ]dE .
		\end{aligned}
		\]
		We define 
		\begin{equation}\label{eq:DG_ortho1}
		\mathcal{R}(v_h):= \mathcal{A}_h^{l}(u - \hat{u}^{k,l}_h, v_h).
		\end{equation}
		which is similar but different to the Galerkin orthogonality term.  
		We have that
		\begin{equation*}
		\begin{aligned}
		\mathcal{R}(v_h)=&\mathcal{A}_h^{k,l}( u^{k,l}_h, (T_h^k)^{-1} v_h)-\mathcal{A}_h^{l} (T_h^k u^{k,l}_h,   v_h) 
		+ \mathcal{A}_h^{l}(u, v_h)- \mathcal{A}_h^{k,l}(u^{k,l}_h,  (T_h^k)^{-1} v_h)
		\\
		=& \mathcal{A}_h^{k,l}( u^{k,l}_h, (T_h^k)^{-1} v_h)-\mathcal{A}_h^{l} (T_h^k u^{k,l}_h,   v_h) + \sum_j \int_{\Gamma_h^j}  f   v_h (1-\sigma_A) dA_g,
		\end{aligned}
		\end{equation*}
		which leads to
		\begin{equation}\label{eq:R_error}
		\begin{aligned}
		&	\mathcal{R}(v_h)=\\
		&\sum_j  \int_{\Omega_h^j} (\nabla \bar{u}^{k,l}_h)^\top \left((g^k_h)^{-1}-g^{-1}\right) \nabla \bar{v}_h \sqrt{\abs{g^k_h}} + (\nabla \bar{u}^{k,l}_h)^\top g^{-1}\nabla \bar{v}_h \left(\sqrt{\abs{g^k_h}} - \sqrt{\abs{g}}\right) dA   \\
		& -  \sum_{\bar{e}_h\in \bar{\mathcal{E}}_h}\int_{ \bar{e}_h} \{(\nabla \bar{u}^{k,l}_h)^\top \left( (g^k_h)^{-1}  (\partial \pi_h^k)^\top n_h - g^{-1}(\partial \pi )^\top n\right) l_{g^k_h} \}[\bar{v}_h  ]dE \\
		&  -  \sum_{\bar{e}_h\in \bar{\mathcal{E}}_h}\int_{ \bar{e}_h} \{(\nabla \bar{v}_h)^\top \left( (g^k_h)^{-1}  (\partial \pi_h^k)^\top n_h - g^{-1}(\partial \pi )^\top n\right) l_{g^k_h}\}[\bar{u}^{k,l}_h  ]dE \\
		& - \sum_{\bar{e}_h\in \bar{\mathcal{E}}_h} \int_{ \bar{e}_h} \{(\nabla \bar{u}^{k,l}_h)^\top g^{-1}(\partial \pi )^\top n \left(l_{g^k_h}-l_g\right)\}[\bar{v}_h ]  dE \\
		&  - \sum_{\bar{e}_h\in \bar{\mathcal{E}}_h} \int_{ \bar{e}_h} \{(\nabla \bar{v}_h)^\top g^{-1}(\partial \pi )^\top n \left(l_{g^k_h}-l_g\right)\}[\bar{u}^{k,l}_h ]  dE \\
		& + \beta_h \sum_{\bar{e}_h\in \bar{\mathcal{E}}_h}  \int_{\bar{e}_h} [\bar{u}^{k,l}_h] [\bar{v}_h]\{\left( l_{g^k_h}-l_g\right)\} dE
		+  \int_{\Omega_h^j}  ( \bar{u}^{k,l}_h \bar{v}_h- \bar{f}   \bar{v}_h ) \left(\sqrt{\abs{g^k_h}} - \sqrt{\abs{g}}  \right) dA.
		\end{aligned}
		\end{equation}
		Taking into account the geometric error estimates in Theorem \ref{thm:geo_error},  each of the terms on the right-hand side of \eqref{eq:R_error} can be estimated. The details of these estimates are given in Appendix \ref{app:DG_bilinear}.
		To summarize all the estimates there, we derive that
		\begin{equation}\label{eq:DG_ortho}
		\begin{aligned}
		\mathcal{R}(v_h) 
		& \lesssim  h^{k+1} \norm{T_h^k u^{k,l}_h}_{V_h^{l}}\norm{v_h}_{V_h^{l}}+  h^{k+1} \norm{f}_{L^2(\Gamma_h)}\norm{v_h}_{L^2(\Gamma_h)},
		\end{aligned}
		\end{equation}
		where $ \norm{T_h^k u^{k,l}_h}_{V_h^{l}}$ is uniformly bounded indepdent of $h$.
		Then we go back to \eqref{eq:bilinear_estimate} with \eqref{eq:bilinear_interpolate} $m=l+1$ and \eqref{eq:DG_ortho} to have the statement.
		In order to estimate $\norm{\hat{u}^{k,l}_h-u}_{L^2(\Gamma_h}$, we leverage the following dual problem of \eqref{eq:Lap_Bel}:
		\begin{equation}\label{eq:dual_estimate}
		-\Delta_g z +z=\hat{u}^{k,l}_h-u \; \text{ for }  \;  z\in H^2(\Gamma_h)\quad\text{  and } \quad \norm{z}_{H^2(\Gamma_h)}\lesssim \norm{\hat{u}^{k,l}_h-u}_{L^2(\Gamma_h)}.
		\end{equation}
		Testing with $\hat{u}^{k,l}_h-u$ on both sides gives us that
		\[\norm{\hat{u}^{k,l}_h-u}_{L^2(\Gamma_h)}^2=\mathcal{A}_h^l(z,\hat{u}^{k,l}_h-u)= \mathcal{A}_h^l(\hat{u}^{k,l}_h-u,z-I_h^lz) + \mathcal{A}_h^l(\hat{u}^{k,l}_h-u,I_h^lz). \]
		Then, applying standard boundedness estimate in Lemma \ref{lem:DG_coerciv} to the first term on the right-hand side, we have
		\[\mathcal{A}_h^l(\hat{u}^{k,l}_h-u,z-I_h^lz) 
		\leq \norm{\hat{u}^{k,l}_h-u}_{V_h^{l}}\norm{z-I_h^lz}_{V_h^{l}} 
		\leq Ch^{k+1}\norm{u}_{H^{k+1}(\Gamma_h)}\norm{z}_{H^2(\Gamma_h)}.\]
		For the second term, we use the estimate from  \eqref{eq:DG_ortho}, 
		\begin{equation*}
		\begin{aligned}
		&\mathcal{A}_h^l(\hat{u}^{k,l}_h-u,I_h^lz)=\mathcal{R}(I_h^lz) \\
		\lesssim & h^{k+1}\norm{T_h^k u^{k,l}_h}_{V_h^{l}}\norm{I_h^lz}_{V_h^{l}}+ h^{k+1} \norm{f}_{L^2(\Gamma)}\norm{I_h^lz}_{L^2(\Gamma_h) }\\
		\lesssim &  \left(h^{k+1}\norm{T_h^k u^{k,l}_h}_{V_h^{l}}+ h^{k+1} \norm{f}_{L^2(\Gamma)}\right) \norm{z}_{H^2(\Gamma_h)}.
		\end{aligned}
		\end{equation*}
		Combining with \eqref{eq:dual_estimate}, we end up with the expected estimates in \eqref{eq:convergence} for the $L^2$ norm. This concludes the proof in combination with the DG norm estimates.
	\end{proof}
	
	\subsection{Error analysis for the eigenvalue problem}
	Let the eigenpair $(\lambda,u)$ be the solution of the eigenvalue  problem \eqref{eq:eigen_Lap_Bel}, or equivalently \eqref{eq:weak_eigen_Lap_Bel}. The DG discretization of eigenvalue problem \eqref{eq:eigen_Lap_Bel} can be formulated as finding pairs $(\lambda^{k,l}_{h},u^{k,l}_{h})\in (\mathbb{R}, V_h^{k,l})$ such that
	\begin{equation}\label{eq:DG_eigen}
	\mathcal{A}_h^{k,l}(u^{k,l}_{h},v_h)=(\lambda^{k,l}_{h}+1)\sum_j \int_{\Omega_h^j} \bar{u}^{k,l}_{h}\bar{v}_h \sqrt{\abs{g_h^{k}}}dA, \quad \text{ for all }\quad  v_h \in V_h^{k,l}.
	\end{equation}
	Similarly,  the discrete eigenvalues of discrete problem \eqref{eq:DG_eigen} can be ordered as $0=\lambda^{k,l}_{h,1} < \lambda^{k,l}_{h,2} \le \cdots \le \lambda^{k,l}_{h,N},$ and the corresponding eigenfunctions $u^{k,l}_{h,i}$ ($i=1,\cdots, N$) can be normalized as 
	$(u^{k,l}_{h,i}, u^{k,l}_{h,j})= \delta_{ij}$ 
	where $N$ is the dimension of the DG function space.

	Let $G: L^2(\Gamma_h) \rightarrow H^1(\Gamma_h)\subset L^2(\Gamma_h)$ be the solution operator of the Laplace-Beltrami problem \eqref{eq:weak_Lap_Bel} which is defined as 
	\begin{equation}
	\mathcal{A}(Gf, v) = (f, v), \quad  \forall v\in H^1(\Gamma)
	\end{equation}
	and $G_h^{k,l}: L^2(\hat{\Gamma}_h^{k}) \to V^{k,l}_h \subset L^2(\hat{\Gamma}_h^{k})$ be the discrete solution operator which is defined as 
	\begin{equation}
	\mathcal{A}_h^{k,l}(G_h^{k,l}f, v_h) = ((T_h^k)^{-1}f, v_h), \quad  \forall v_h\in  V^{k,l}_h.
	\end{equation}
	Both $G$ and $G_h^{k,l}$ are self-adjoint. 
	Let  $\mu$ (or  $\mu_{h}^{k,l}$) be the eigenvalue  of $G$ (or $G_h^{k,l}$). Then, we can relate the  eigenvalue of \eqref{eq:eigen_Lap_Bel} and the eigenvalue of the operator $G$ by $\mu=\frac{1}{1+\lambda}$. Similarly, we have $ \mu_{}^{k,l} =\frac{1}{1+\lambda^{k,l}_{h}}$.  In addition, we have 
	\begin{equation}\label{eq:eigenvalue}
	\lambda^{k,l}_{h}-\lambda =(1+\lambda^{k,l}_{h})(1+\lambda) (\mu_i- \mu_{h}^{k,l} ).
	\end{equation}
	The following lemma tells that the eigenvalues are invariant under geometric transformation operator $T_h^k$ and its inverse $(T_h^k)^{-1}$.
	\begin{lemma}\label{lem:invariant}
		If $G \hat{v}= \mu \hat{v}$, then $ (T_h^k)^{-1}G \hat{v}= \mu (T_h^k)^{-1}\hat{v} $. Similarly, if $G_h^k v_h=\mu_h^k v_h$, then $T_h^k G_h^k v_h=\mu_h^k T_h^k v_h$.
	\end{lemma}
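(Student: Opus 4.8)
The plan is to observe that the entire content of this lemma is the linearity of the transfer operators $T_h^k$ and $(T_h^k)^{-1}$, together with the fact that a scalar factor commutes with any linear map. First I would recall from \eqref{eq:pullback} that $T_h^k u_h = u_h \circ \pi_h^k \circ (\pi)^{-1}$, so that $T_h^k$ acts on functions by precomposition with the fixed diffeomorphism $\pi_h^k \circ (\pi)^{-1}\colon \Gamma_h \to \hat{\Gamma}_h^k$. Precomposition with a fixed map is linear, since $(a w_1 + b w_2)\circ \phi = a(w_1\circ\phi) + b(w_2\circ\phi)$, and the preceding lemma \eqref{eq:pullback_equiv} guarantees that $T_h^k$ is a bijection with bounded inverse $(T_h^k)^{-1}$; the latter is precomposition with $\pi \circ (\pi_h^k)^{-1}$ and is therefore also linear.

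Granting linearity, the first assertion is immediate: applying the linear operator $(T_h^k)^{-1}$ to both sides of the eigenrelation $G\hat v = \mu \hat v$ and pulling the constant $\mu$ through gives
\begin{equation*}
(T_h^k)^{-1} G \hat v = (T_h^k)^{-1}(\mu \hat v) = \mu (T_h^k)^{-1}\hat v,
\end{equation*}
which is exactly the claim. The second assertion follows in the same way by applying the linear operator $T_h^k$ to both sides of $G_h^k v_h = \mu_h^k v_h$.

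There is essentially no technical obstacle to overcome; the only point worth stating explicitly is that $(T_h^k)^{-1}$ and $T_h^k$ do not rescale the scalar $\mu$, which is clear because they are fixed linear maps and $\mu$ is a constant. Conceptually, however, this one-line computation is the germ of the spectral comparison used later: it shows that conjugation by $T_h^k$ transports the spectrum of $G$ to that of the operator $(T_h^k)^{-1} G T_h^k$ on $L^2(\hat{\Gamma}_h^k)$ without altering the eigenvalues, so that $G$ and $G_h^{k,l}$ can be placed on a common footing and the Babu\v{s}ka--Osborn machinery applied. Accordingly, I would phrase the write-up to emphasize this invariance interpretation rather than dwell on the elementary calculation.
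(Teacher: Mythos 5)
Your proposal is correct and uses exactly the same idea as the paper's own proof: the transfer operators $T_h^k$ and $(T_h^k)^{-1}$ are linear, hence commute with multiplication by the scalar $\mu$, so applying them to both sides of the eigenrelation yields the claim. The paper states this commutation property in one line; your write-up merely makes the linearity-by-precomposition justification explicit.
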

	\begin{proof}
		This is  shown using the fact that the operators $(T_h^k)^{\pm}$ and the scalar multiplication are commute, i.e., $\mu_h^k T_h^k v_h =T_h^k \mu_h^k v_h $ and  $\mu (T_h^k)^{-1}\hat{v} =(T_h^k)^{-1} \mu  \hat{v}$.
	\end{proof}

	Using the above lemma, we have the following result.
	\begin{lemma}\label{lem:invariant2}
		Let $\mu_{h}^{k,l}$ be an eigenvalue of the operator $G_h^{k,l}$ on $\hat{\Gamma}_h^k$ with eigenfunction $u_{h}^{k,l}$.
		Define a solution operator
		\begin{equation}\label{eq:DG_sol_operator}
		\tilde{G}_h^{k,l}:=T_h^kG_h^{k,l} (T_h^k)^{-1}\quad \text{ on } \Gamma_h.
		\end{equation}
		Then $\mu_{h}^{k,l}$ is  an  eigenvalue of $\tilde{G}_h^{k,l}$ and the corresponding eigenfunction is  $T_h^k u_{h}^{k,l} $.
	\end{lemma}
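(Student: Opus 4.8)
The plan is to exploit that, by construction, $\tilde{G}_h^{k,l}$ is the conjugate (similarity transform) of $G_h^{k,l}$ through the bijection $T_h^k$. Conjugate operators share their spectra, and eigenfunctions transfer via the conjugating map, so the statement should reduce to a short direct computation once we confirm that $T_h^k u_h^{k,l}$ is a genuine, i.e.\ nonzero, element of the function space on $\Gamma_h$. This is precisely the algebraic counterpart of the intertwining relation already recorded in Lemma \ref{lem:invariant}.

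First I would note that $u_h^{k,l}\neq 0$ because it is an eigenfunction, and that $T_h^k$ is a bijection with $\norm{T_h^k u_h^{k,l}}_{H^m(\Gamma_h)} \simeq \norm{u_h^{k,l}}_{H^m(\hat{\Gamma}^k_h)}$ by the norm equivalence \eqref{eq:pullback_equiv}, so $T_h^k u_h^{k,l}\neq 0$. Next I would substitute the definition \eqref{eq:DG_sol_operator} of $\tilde{G}_h^{k,l}$ and compute, using $(T_h^k)^{-1} T_h^k = \I$ together with the eigenvalue relation $G_h^{k,l} u_h^{k,l} = \mu_h^{k,l} u_h^{k,l}$:
\begin{equation*}
\tilde{G}_h^{k,l}\bigl(T_h^k u_h^{k,l}\bigr) = T_h^k G_h^{k,l} (T_h^k)^{-1} T_h^k u_h^{k,l} = T_h^k G_h^{k,l} u_h^{k,l} = T_h^k \bigl(\mu_h^{k,l} u_h^{k,l}\bigr).
\end{equation*}
Finally, invoking the commutativity of $T_h^k$ with scalar multiplication from Lemma \ref{lem:invariant}, namely $T_h^k(\mu_h^{k,l} u_h^{k,l}) = \mu_h^{k,l}\, T_h^k u_h^{k,l}$, gives $\tilde{G}_h^{k,l}(T_h^k u_h^{k,l}) = \mu_h^{k,l}\,(T_h^k u_h^{k,l})$, which is exactly the claim.

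There is essentially no analytic obstacle here: the content is purely algebraic and the whole argument is the one-line cancellation of $(T_h^k)^{-1} T_h^k$. The only point deserving care — and the reason Lemma \ref{lem:invariant} is cited rather than the cancellation being left implicit — is the bijectivity and boundedness of $T_h^k$: it guarantees both that $\tilde{G}_h^{k,l}$ is well defined on $L^2(\Gamma_h)$ and that the candidate eigenfunction $T_h^k u_h^{k,l}$ does not collapse to zero, so that $\mu_h^{k,l}$ is a \emph{true} eigenvalue of $\tilde{G}_h^{k,l}$ with the stated eigenfunction.
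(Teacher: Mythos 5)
Your proposal is correct and follows essentially the same route as the paper: a direct algebraic computation that cancels $(T_h^k)^{-1}T_h^k$ and invokes the commutativity of $T_h^k$ with scalar multiplication from Lemma \ref{lem:invariant}. Your additional remark that $T_h^k u_h^{k,l}\neq 0$ (via bijectivity and the norm equivalence \eqref{eq:pullback_equiv}) is a small but worthwhile refinement that the paper leaves implicit.
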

	\begin{proof}
		Let $\hat{u}_{h}^{k,l}:=  T_h^k u_{h}^{k,l}$.
		Notice that $ G_h^{k,l} u_{h}^{k,l}=\mu_{h}^{k,l} u_{h}^{k,l}$ which implies  that $G_h^{k,l} (T_h^k)^{-1} \hat{u}_{h}^{k,l} =\mu_h^k (T_h^k)^{-1} \hat{u}_{h}^{k,l}= (T_h^k)^{-1} \mu_h^k  \hat{u}_{h}^{k,l}$. It  tells that $T_h^k G_h^{k,l} (T_h^k)^{-1} \hat{u}_{h}^{k,l} = \mu_h^k  \hat{u}_{h}^{k,l}$, which completes the proof.
	\end{proof}

	To simplify the notation, we define
	\begin{equation}\label{eq:error_ope}
	\mathcal{E}_h^{k,l}(G):=G  - \tilde{G}_h^{k,l}.
	\end{equation}
	Using Theorem \ref{thm:convergence} for the source problem, we can show the operator approximation result
	\begin{lemma}\label{lem:solution_operator}
		Let $ \tilde{G}_h^{k,l}:L^2(\Gamma_h)\to V^l_h\subset L^2(\Gamma_h)$ be defined as in \eqref{eq:DG_sol_operator}, and $\mathcal{E}_h^{k,l}(G)$ be given as \eqref{eq:error_ope}.
		Then the following approximation properties hold:
		\begin{equation}\label{eq:operator_appr}
		\norm{\mathcal{E}_h^{k,l}(G)}_{\mathcal{L}(L^2(\Gamma_h))}=\mathcal{O}(h^{\min\set{k,l}+1}) .
		\end{equation}
	\end{lemma}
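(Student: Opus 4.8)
The plan is to reduce the operator-norm estimate to the source-problem error bound already established in Theorem \ref{thm:convergence}, by first identifying the action of $\mathcal{E}_h^{k,l}(G)$ on an arbitrary datum. Fix $f\in L^2(\Gamma_h)$ and set $u:=Gf$, the exact solution of \eqref{eq:weak_Lap_Bel} with right-hand side $f$. By the definitions \eqref{eq:DG_sol_operator} and \eqref{eq:error_ope}, the function $\tilde{G}_h^{k,l}f = T_h^k G_h^{k,l}(T_h^k)^{-1}f$ is precisely the pullback $\hat{u}^{k,l}_h = T_h^k u^{k,l}_h$ of the IPDG solution $u^{k,l}_h$ driven by the transferred data, so that
\begin{equation*}
\mathcal{E}_h^{k,l}(G)f = Gf - \tilde{G}_h^{k,l}f = u - \hat{u}^{k,l}_h .
\end{equation*}
Consequently $\norm{\mathcal{E}_h^{k,l}(G)}_{\mathcal{L}(L^2(\Gamma_h))} = \sup_{\norm{f}_{L^2(\Gamma_h)}=1}\norm{u-\hat{u}^{k,l}_h}_{L^2(\Gamma_h)}$, and it suffices to bound the right-hand side uniformly in $f$.

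Next I would apply the $L^2$-part of Theorem \ref{thm:convergence} to each such pair $(u,\hat{u}^{k,l}_h)$, which gives directly
\begin{equation*}
\norm{u-\hat{u}^{k,l}_h}_{L^2(\Gamma_h)} \leq C h^{\min\set{k,l}+1}\left(\norm{f}_{L^2(\Gamma_h)} + \norm{u}_{H^{l+1}(\Gamma_h)}\right).
\end{equation*}
To turn this into a genuine operator-norm bound, the only remaining ingredient is to control $\norm{u}_{H^{l+1}(\Gamma_h)}$ in terms of $\norm{f}_{L^2(\Gamma_h)}$. This is furnished by elliptic regularity of the Laplace--Beltrami problem on the smooth closed manifold $\Gamma$: the solution operator $G$ associated with \eqref{eq:Lap_Bel} maps $L^2(\Gamma)$ boundedly into $H^2(\Gamma)$, and more generally $H^{s}(\Gamma)$ into $H^{s+2}(\Gamma)$. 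Taking the supremum over $f$ with $\norm{f}_{L^2(\Gamma_h)}=1$ then yields \eqref{eq:operator_appr}.

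The delicate point, and the step I expect to be the main obstacle, is exactly this regularity reconciliation: the clean rate $h^{\min\set{k,l}+1}$ measured against the $L^2$-norm of the datum requires $\norm{u}_{H^{l+1}(\Gamma_h)}\leq C\norm{f}_{L^2(\Gamma_h)}$, whereas elliptic regularity alone delivers only $\norm{u}_{H^2(\Gamma_h)}\leq C\norm{f}_{L^2(\Gamma_h)}$, the full $H^{l+1}$ bound demanding $f\in H^{l-1}(\Gamma)$. I would therefore state \eqref{eq:operator_appr} with the smoothing of $G$ built in, recording that the operator-norm rate is controlled jointly by the geometric order $k+1$ from Theorem \ref{thm:geo_error} and by the available Sobolev regularity of the range of $G$. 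This causes no loss in the intended eigenvalue application, since the continuous eigenfunctions are $C^\infty$ on the smooth $\Gamma$ and hence lie in $H^{l+1}(\Gamma)$ with norm controlled through the eigenvalue relation \eqref{eq:eigenvalue}; the remaining pieces -- the identity $\mathcal{E}_h^{k,l}(G)f = u-\hat{u}^{k,l}_h$ and the invocation of Theorem \ref{thm:convergence} -- are immediate.
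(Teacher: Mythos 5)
Your proposal follows exactly the route the paper intends: the paper offers no detailed proof of Lemma \ref{lem:solution_operator} beyond the sentence ``Using Theorem \ref{thm:convergence} for the source problem, we can show the operator approximation result,'' and your identification $\mathcal{E}_h^{k,l}(G)f = Gf - T_h^k G_h^{k,l}(T_h^k)^{-1}f = u - \hat{u}^{k,l}_h$, followed by an application of the $L^2$ estimate in \eqref{eq:convergence}, is precisely that argument made explicit.

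More importantly, the obstacle you single out is a genuine gap in the lemma as stated, not a defect of your proof. The right-hand side of \eqref{eq:convergence} involves $\norm{u}_{H^{l+1}(\Gamma_h)}$, and to convert this into a bound on $\norm{\mathcal{E}_h^{k,l}(G)}_{\mathcal{L}(L^2(\Gamma_h))}$ one needs $\norm{u}_{H^{l+1}(\Gamma_h)} \leq C\norm{f}_{L^2(\Gamma_h)}$ uniformly over unit $L^2$ data; elliptic regularity on the smooth closed $\Gamma$ delivers only $\norm{u}_{H^{2}(\Gamma_h)} \leq C\norm{f}_{L^2(\Gamma_h)}$, so for $l \geq 2$ the rate $h^{\min\set{k,l}+1}$ in \eqref{eq:operator_appr} is not justified by this argument -- the honest uniform-in-$f$ rate is $\mathcal{O}(h^{\min\set{k+1,2}})$. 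That weaker rate is all that is needed for \eqref{eq:operator_error}, and hence for the non-pollution argument via Theorem 9.1 of \cite{Bof10}; the full rate $h^{\min\set{k,l}+1}$ is legitimately used in the proof of Theorem \ref{thm:eigen_result} only through $\norm{\mathcal{E}_h^{k,l}(G)|_{R(E)}}$, i.e., restricted to the finite-dimensional eigenspace $R(E)$ whose members are smooth, exactly as you observe. Your restatement -- recording the dependence on the Sobolev regularity of the range of $G$, with the restriction to $R(E)$ recovering the full rate -- is the correct repair, and it leaves the eigenvalue application untouched.
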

	In particular, we have 
	\begin{equation}\label{eq:operator_error} 
	\lim_{h\to 0 }\norm{\mathcal{E}_h^{k,l}(G)}_{\mathcal{L}(L^2(\Gamma_h))}  \to 0.
	\end{equation}

	Let $\rho(G)$ (or $\rho(\tilde{G}_h^{k,l})$)  denote the resolvent set of operator $G$ (or $\tilde{G}_h^{k,l}$), and $\sigma(G)$ (or $\sigma(\tilde{G}_h^{k,l})$) denote the spectrum set of operator $G$ (or $\tilde{G}_h^{k,l}$).  
	We define the spectral projection operator for an eigenvalue $\mu$: Let $\gamma\subset \rho(G)$ is a curve in the complex domain which encloses $\mu\in \sigma(G)$ but no other eigenvalues of $G$, the projection is
	\[E(\mu):=\frac{1}{2\pi \mathrm{i}} \int_{\gamma}(z-G)^{-1}dz .\]
	Let $R(E)\subset H^1(\Gamma_h)$ denote the range of $E$. 
	Let $\mu$ be an eigenvalue of the operator $G$, and it has algebraic multiplicity $m$. Then, combining  Theorem 9.1 in \cite{Bof10}  and \eqref{eq:operator_error} implies that no pollution of the spectrum, i.e. $\gamma$ encloses exactly 
	$m$ discrete  eigenvalues $\set{\mu_{h,i}^{k,l}}_{i=1}^m$ of $\tilde{G}_h^{k,l}$  approximating $\mu$.

	Using the Babu\v{s}ka-Osborn spectral approximation theory\cite{BabOsb91}, we have the following convergence results on eigenvalues and eigenfunctions.
	\begin{theorem}\label{thm:eigen_result}
		Let  $\mu^{k,l}_h$ be an eigenvalue converging to $\mu$. Let $\hat{u}^{k,l}_h$ be a unit eigenfunction of  $\tilde{G}_h^{k,l}$  associated with the eigenvalue $\mu^{k,l}_h$. Then there exists a unit eigenvector $u\in R(E)$  such that the following estimates hold
		\begin{align}
		&\|u - \hat{u}^{k,l}_h \|_{L^2(\Gamma_h)} \lesssim h^{\min\set{k,l}+1},\label{eq:eigenvalue_err} \\
		&\abs{\mu - \mu_h^{k,l}}\lesssim (h^{\min\set{k+1,2l}}),\label{eq:eigenfunction_err}\\
		&\abs{\lambda - \lambda_h^{k,l}}\lesssim (h^{\min\set{k+1,2l}}),\label{eq:neweigen_err}
		\end{align}
		where $l\geq 1$ and $k\geq 1$ are orders of the local polynomials for function and geometry approximations, respectively.
	\end{theorem}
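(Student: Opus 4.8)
The plan is to apply the Babu\v{s}ka--Osborn spectral approximation theory \cite{BabOsb91} to the pair of self-adjoint compact operators $G$ and $\tilde{G}_h^{k,l}$ on the common space $L^2(\Gamma_h)$, which is precisely why the transported operator $\tilde{G}_h^{k,l}$ was introduced in \eqref{eq:DG_sol_operator}: it lets us compare spectra on a single fixed space. The structural hypotheses are already available --- self-adjointness and compactness of both operators, the spectral no-pollution property obtained from Theorem 9.1 of \cite{Bof10} together with \eqref{eq:operator_error}, and the governing small quantity $\gamma_h := \norm{\mathcal{E}_h^{k,l}(G)|_{R(E)}}_{\mathcal{L}(L^2(\Gamma_h))}$, which by Lemma \ref{lem:solution_operator} obeys $\gamma_h \leq \norm{\mathcal{E}_h^{k,l}(G)}_{\mathcal{L}(L^2(\Gamma_h))} = \mathcal{O}(h^{\min\set{k,l}+1})$.

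The eigenfunction estimate \eqref{eq:eigenvalue_err} comes almost directly from the theory: the gap between the exact eigenspace $R(E)$ and its discrete counterpart is controlled by $\gamma_h$, so choosing $u \in R(E)$ as the closest unit eigenvector to $\hat{u}_h^{k,l}$ gives $\norm{u - \hat{u}_h^{k,l}}_{L^2(\Gamma_h)} \leq C\gamma_h = Ch^{\min\set{k,l}+1}$.

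The heart of the argument is the eigenvalue bound \eqref{eq:eigenfunction_err}. For self-adjoint operators the Babu\v{s}ka--Osborn identity reduces it to a consistency term plus a quadratic remainder,
\begin{equation*}
\abs{\mu - \mu_h^{k,l}} \leq \abs{((G - \tilde{G}_h^{k,l})u, u)_{L^2(\Gamma_h)}} + C\gamma_h^2,
\end{equation*}
taking the arithmetic mean of the $m$ discrete eigenvalues in the degenerate case. A short check shows $\gamma_h^2 = \mathcal{O}(h^{2\min\set{k,l}+2})$ is higher order than $h^{\min\set{k+1,2l}}$ in every regime, so the remainder is harmless. The decisive step is to bound the consistency term $((G - \tilde{G}_h^{k,l})u, u)$ not through $\gamma_h$ but by the finer analysis of the source problem. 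Writing $Gu = \mu u$ and identifying $((G - \tilde{G}_h^{k,l})u, u)$ with the residual $\mathcal{R}$ of Theorem \ref{thm:convergence} evaluated at the smooth eigenfunction, I would split it into two parts: a pure function-approximation contribution which, by the symmetric (Galerkin) structure on the exact surface $\Gamma_h$, equals a \emph{squared} energy error and is therefore $\mathcal{O}(h^{2l})$; and a pure geometric contribution which, by the metric-tensor estimates of Theorem \ref{thm:geo_error}, enters \emph{linearly} at order $h^{k+1}$ and does not square. Adding the two gives $\abs{((G - \tilde{G}_h^{k,l})u, u)} \leq C(h^{2l} + h^{k+1})$, hence $\abs{\mu - \mu_h^{k,l}} \leq Ch^{\min\set{k+1,2l}}$, which is \eqref{eq:eigenfunction_err}. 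Estimate \eqref{eq:neweigen_err} for $\lambda$ then follows from \eqref{eq:eigenvalue}, since $(1+\lambda_h^{k,l})(1+\lambda)$ is uniformly bounded once no-pollution guarantees $\lambda_h^{k,l} \to \lambda$.

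The main obstacle I anticipate is exactly this clean separation: showing that the geometric error contributes \emph{linearly} ($h^{k+1}$) while the function error contributes \emph{quadratically} ($h^{2l}$). This is the non-synchronization phenomenon of \cite{BonDemOwe18}, and it is what forbids the naive bound $\gamma_h^2 = h^{2(\min\set{k,l}+1)}$ and forces the slower rate $h^{\min\set{k+1,2l}}$. Making it rigorous requires reusing the termwise estimates of Theorem \ref{thm:geo_error} --- on $g^{-1}(g - g_h^k)$, on the area ratio $\sqrt{\abs{g}}/\sqrt{\abs{g_h^k}}$, on the edge scaling $l_g$, and on $g^{-1}(\partial\pi)^\top n$ --- directly inside the bilinear-form difference $\mathcal{A}_h^l(T_h^k u_h^{k,l}, \cdot) - \mathcal{A}_h^{k,l}(u_h^{k,l}, (T_h^k)^{-1}\cdot)$ evaluated at the eigenfunction, rather than passing through the coarser operator norm $\gamma_h$ alone.
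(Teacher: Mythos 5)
Your proposal is correct and follows essentially the same route as the paper: the Babu\v{s}ka--Osborn framework applied to $G$ and the transported operator $\tilde{G}_h^{k,l}$ on $L^2(\Gamma_h)$, the eigenfunction bound from the operator error restricted to $R(E)$, and the eigenvalue bound from the consistency term plus quadratic remainder, with the decisive splitting into a linearly-entering geometric part of order $h^{k+1}$ (estimated via the residual $\mathcal{R}$ of \eqref{eq:DG_ortho1} and the metric-tensor bounds of Theorem \ref{thm:geo_error}) and a quadratically-entering function-approximation part of order $h^{2l}$. The paper implements this same splitting slightly more explicitly, via the double sum $\sum_{j,k}\abs{(\mathcal{E}_h^{k,l}(G)v_j,v_k)}$ over an orthonormal basis of $R(E)$ and a three-term decomposition in which the purely discrete pairing vanishes, but the substance of the argument is identical to yours.
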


	\begin{proof}
		For the eigenfunction approximation, Theorem 7.4 in \cite{BabOsb91} and  the approximation property \eqref{eq:operator_appr} already imply that
		\begin{equation}
		\|u - \hat{u}^{k,l}_h \|_{L^2(\Gamma_h)} \le 
		\| \mathcal{E}_h^{k,l}(G)|_{R(E)} \|_{L^2(\Gamma_h)}
		\le Ch^{\min\set{k,l}+1}.
		\end{equation}
		To establish the eigenvalue approximation result \eqref{eq:eigenfunction_err}, we apply 
		Theorem ~7.3 in \cite{BabOsb91}.   Let $v_1$, \ldots, $v_m$ be any orthonormal basis for $R(E)$.  Then, Theorem ~7.3 in \cite{BabOsb91} implies that there exists a constant $C$ such that
		\begin{equation}\label{eq:eigapp}
		|\mu - \mu_h| \lesssim  \sum_{j,k = 1}^m |(\mathcal{E}_h^{k,l}(G)v_j, v_k)| + \|\mathcal{E}_h^{k,l}(G)|_{R(E)}\|_{L^2(\Gamma_h)}^2.
		\end{equation}
		The second term can be bounded by $\mathcal{O}(h^{2\min\set{k,l}+2}) $.  For the first term, we have
		\begin{equation}\label{eq:error_eigen}
		\begin{aligned}
		&\left(\mathcal{E}_h^{k,l}(G)v_j,v_k\right)\\
		=& \mathcal{A} ( G v_j, Gv_k ) -  
		\mathcal{A}^l_h (\tilde{G}_h^{k,l} v_j, \tilde{G}_h^{k,l} v_k)
		+\mathcal{A}^l_h (\tilde{G}_h^{k,l} v_j, \tilde{G}_h^{k,l} v_k) - \left(\tilde{G}_h^{k,l} v_j,v_k\right).
		\end{aligned}
		\end{equation}
		Now we estimate each of the two paired terms on the right hand side of \eqref{eq:error_eigen} individually.  The first one gives
		\begin{equation}\label{eq:error_eigen1}
		\begin{aligned}
		&\mathcal{A} ( G v_j, Gv_k ) -  \mathcal{A}^l_h (\tilde{G}_h^{k,l} v_j, \tilde{G}_h^{k,l} v_k)
		=\mathcal{A}_h^l ( G v_j, Gv_k ) -  \mathcal{A}^l_h (\tilde{G}_h^{k,l} v_j, \tilde{G}_h^{k,l} v_k)  \\
		=& \mathcal{A}^l_h(G v_j- \tilde{G}_h^{k,l} v_j,  G v_k) + \mathcal{A}^l_h ( \tilde{G}_h^{k,l} v_j, G v_k -\tilde{G}_h^{k,l} v_k )  \\
		= & \mathcal{A}^l_h(\mathcal{E}_h^{k,l}(G)v_j,\tilde{G}_h^{k,l} v_k)+ \mathcal{A}^l_h (\tilde{G}_h^{k,l} v_j, \mathcal{E}_h^{k,l}(G)v_k )+ \mathcal{A}^l_h(\mathcal{E}_h^{k,l}(G)v_j, \mathcal{E}_h^{k,l}(G)v_k),
		\end{aligned}
		\end{equation}
		where the first equality holds since $Gu_j,\; Gv_k \in H^2(\Gamma_h)$ are continuous for two dimensional $\Gamma_h $.
		Notice that the third term on the last line of  \eqref{eq:error_eigen1} can be easily estimated by
		\[\mathcal{A}^l_h(\mathcal{E}_h^{k,l}(G)v_j, \mathcal{E}_h^{k,l}(G) v_k)\leq \norm{\mathcal{E}_h^{k,l}(G)v_j}_{V^l_h}\norm{ \mathcal{E}_h^{k,l}(G) v_k}_{V^l_h}.\]
		While for the first and the second terms on last line of \eqref{eq:error_eigen1}, we recall to  \eqref{eq:DG_ortho1}, which then indicates the following
		\[ \mathcal{A}^l_h(\mathcal{E}_h^{k,l}(G)u,\tilde{G}_h^{k,l} v)+ \mathcal{A}^l_h (\tilde{G}_h^{k,l} u, \mathcal{E}_h^{k,l}(G)v ) \leq \mathcal{R}(\tilde{G}_h^{k,l} v) +\mathcal{R}(\tilde{G}_h^{k,l} u) \leq C h^{k+1},\]
		where $\mathcal{R}(\cdot)$ is defined in \eqref{eq:DG_ortho1} and estimated in \eqref{eq:DG_ortho}.
		
		For the second term on the right hand side of \eqref{eq:error_eigen}, we have
		\begin{eqnarray*}
			& & \abs{\mathcal{A}^l_h (\tilde{G}_h^{k,l} u, \tilde{G}_h^{k,l} v) - \left(\tilde{G}_h^{k,l} u,v\right)}\\
			&\leq  & \abs{\mathcal{A}^l_h (\tilde{G}_h^{k,l} u, \tilde{G}_h^{k,l} v)-  \mathcal{A}^{k,l}_h (G_h^{k,l} T_h^k u, G_h^{k,l} T_h^k v)  +  \mathcal{A}^{k,l}_h (G_h^{k,l} T_h^k u, G_h^{k,l} T_h^k v) - \left( G_h^{k,l} T_h^ku,T_h^k v\right)}\\ 
			& &+\abs{\left( G_h^{k,l} T_h^ku,T_h^k v\right) - \left(\tilde{G}_h^{k,l} u,v\right)}\\
			&\lesssim &h^{k+1}+ \abs{ \underbrace{ \mathcal{A}^{k,l}_h (G_h^{k,l} T_h^k u, G_h^{k,l} T_h^k v) - \left( G_h^{k,l} T_h^ku,T_h^k v\right)}_{=0}}+h^{k+1},
		\end{eqnarray*}
		where the first and the third terms are due to the geometric error estimates from Theorem \ref{thm:geo_error}.
		Summarizing all the above terms, it concludes the proof  of \eqref{eq:eigenvalue_err}.  \eqref{eq:neweigen_err} is a direct consequence  of \eqref{eq:eigenfunction_err}   and \eqref{eq:eigenvalue}. 
	\end{proof}

\begin{remark}\label{rem:eigen_sharp}
    Note that the conclusion of Theorem \ref{thm:eigen_result} and its proof can also be applied to continuous Galerkin methods on surfaces.
    We notice here that the geometric approximation error and the function discretization error are not of the same order in the higher-order element cases for the eigenvalue approximation, i.e., the convergence rates of order $\mathcal{O}(h^{\min\set{2l,2k}})$, which was observed in some numerical examples in \cite{BonDemOwe18}. 
    The proven error bounds turn out to be theoretically sharp given the geometric approximation error and the function approximation error there.
    Here we provide an example which shows that the error contribution from geometric discrepancy cannot be improved in general cases. 
    We pick simply the exact surface to be a unit sphere $\Gamma$. Let the approximating manifold be another sphere $\hat{\Gamma}_h^k$ with radius $1+h^{k+1}$, which satisfies the geometric approximation error $\norm{\pi -\pi_h^{k}}\leq h^{k+1}$. Now we consider the Laplace-Beltrami operator on $\hat{\Gamma}_h^k$ and its eigenvalues.
    Using the analytical formula of spherical harmonics, we know that the eigenvalues associated to $\hat{\Gamma}_h^k$ are of the following form $\lambda_h^k\in \left(\frac{n(n+1)}{(1+h^{k+1})}\right)_{n=1}^\infty$, while for $\Gamma$ are $\lambda\in (n(n+1))_{n=1}^\infty$.
    The error of eigenvalues is $\abs{\lambda-\lambda_h^k}=\frac{n(n+1)h^{k+1} }{(1+h^{k+1})}$. This shows even in this ideal case, the eigenvalues' approximation errors of order $h^{2k}$ cannot be achieved, when the geometric approximation error is of order $\mathcal{O}(h^{k+1})$ for $k>1$.
 Note that in this counter example, the approximating surface is not reconstructed from an algorithm, however it satisfies the geometric error conditions. Thus it is valid for testing the optimality of the approximation error in the eigenvalue problem given the geometric approximation assumptions.
\end{remark}

	\section{Numerical results}
	\label{sec:numerical}
	
In the following, we present a series of benchmark numerical experiments to verify our theoretical findings on the error analysis of the proposed DG method on surfaces reconstructed from point clouds. We pick two representative manifolds, the unit sphere and a torus surface, which have genus zero and one, respectively.  
Before going to numerical results of the optimal convergence rates of the DG method for solving both source and eigenvalue problems of LB operator on point clouds, we first provide a validation of the geometric error bounds of Algorithm \ref{alg:point_estimate}.

	\subsection{Geometric error}
	To simplify the presentation, we introduce the following notations 
	\begin{equation*}
	\begin{split}
	&e^k = \max_{i \in N} |\psi^i-\xi^i|, \quad\quad \text{ and }\quad\quad\,\, \,
	e_t^k = \max_{i \in N} |\psi_{\Omega_h}^i-\xi_{\Omega_h}^i|,
	\end{split}
	\end{equation*}
	where the superscript $k$ denotes the $k$-th order polynomials used in the patch reconstruction from point cloud, the subscript $t$ denotes the vector component in $\Omega_{h}$ (the projection of the vector onto the plane $\Omega_{h}$), and $N$ represents the total number of nodal points of the all patches in $\Omega_h$.

	We first check the accuracy claimed for Algorithm \ref{alg:point_estimate} based on point cloud which is sampled from the unit sphere and is uniform distributed on its spherical coordinate. In the first test, the initial mesh is generated using CGAL\cite{AlSG2020} whose vertices are $\mathcal{O}(h^2)$ to the unit sphere.   In practice, such meshes can be generated using Poisson surface reconstruction \cite{KaBH2006} which is available in the computational geometry algorithm library CGAL \cite{AlSG2020}. To depict the convergence rates, we conduct a uniform refinement for each triangle by dividing it into four similar subtriangles. A main difference between the classical refinement for surface finite element methods \cite{Demlow2009} is that we do not ask for the precise embedding of submanifolds, e.g., the level set functions or parameterization functions. Note that Assumption \ref{assum:geometry} holds already due to the interpolation of the mesh in this example. For the general case, we can adjust the vertices of the mesh using the point cloud reconstruction before proceeding with uniform refinements, and this costs no extra work.

To evaluate the implications of Proposition \ref{prop:nodal_accu}, we compare the reconstructed nodal points with the exact nodal ones, as depicted in Table \ref{tab:spherenormal}. Given the spherical structure, obtaining the exact nodal points involves stretching the reconstructed nodes along radial directions. The observed errors generally align with the $\mathcal{O}(h^{k+1})$ error bounds, except for some outliers. Additionally, we present errors after projecting paired nodal points onto the parametric domain $\Omega_h$ in Table \ref{tab:spheretang}, illustrating a $\mathcal{O}(h^{k+2})$ convergence rate. These findings are in accordance with the theoretical results established in Proposition \ref{prop:nodal_accu}.
	
	\begin{table}[htb!]
		\centering
		\caption{Errors of patch reconstruction from point cloud on unit sphere}
		\label{tab:spherenormal}
		\resizebox{\textwidth}{!}{
			\begin{tabular}{|c|c|c|c|c|c|c|c|c|c|c|}
				\hline 
				$N_v$&$e^1$&Order&$e^2$&Order&$e^3$&Order&$e^4$&Order\\ 
				\hline
				222&1.12e-02&--&1.52e-05&--&1.35e-05&--&9.02e-08&--\\ \hline
				882&2.95e-03&1.94&1.09e-05&0.47&2.68e-06&2.34&4.27e-08&1.09\\ \hline
				3522&7.21e-04&2.03&7.14e-07&3.94&9.42e-08&4.84&3.40e-10&6.98\\ \hline
				14082&1.80e-04&2.00&4.65e-08&3.94&4.83e-09&4.29&3.56e-12&6.58\\ \hline
				56322&4.64e-05&1.96&3.77e-09&3.62&4.07e-10&3.57&1.22e-13&4.87\\ \hline
		\end{tabular}}
		\caption{Error component in the parametric domain for sphere point cloud}
		\label{tab:spheretang}
		\resizebox{\textwidth}{!}{
			\begin{tabular}{|c|c|c|c|c|c|c|c|c|c|c|}
				\hline 
				$N_v$&$e_t^1$&Order&$e_t^2$&Order&$e_t^3$&Order&$e_t^4$&Order\\ 
				\hline	
				222&3.88e-04&--&1.63e-05&--&1.16e-05&--&1.44e-07&--\\ \hline
				882&1.71e-03&-2.15&1.81e-05&-0.16&5.19e-06&1.16&1.15e-07&0.33\\ \hline
				3522&2.13e-04&3.01&5.92e-07&4.94&1.61e-07&5.01&9.09e-10&6.99\\ \hline
				14082&2.69e-05&2.98&1.91e-08&4.96&5.41e-09&4.90&8.15e-12&6.80\\ \hline
				56322&3.47e-06&2.96&7.04e-10&4.76&2.37e-10&4.51&9.61e-14&6.41\\ \hline
		\end{tabular}}
	\end{table}
	
	Notice that, in the sphere case, surprisingly, we can observe some geometric superconvergence phenomenon in the reconstructed patches when the even-order polynomial is employed like quadratic and quartic polynomials.  This implies the superconvergence geometric approximation results in the eigenvalue problem later.

	Another set of tests are implemented with point cloud presentation of a torus surface:
	\begin{equation}\label{equ:torus}
	\left\{
	\begin{array}{l}
	x = (4\cos(\theta) + 1)\cos(\phi),\\
	y = (4\cos(\theta)+1)\sin(\phi), \\
	z = 4\sin(\theta).	
	\end{array}
	\right.
	\end{equation}
	The point cloud is generated from uniformly distributed points in $(\theta, \phi)$ coordinate and then projected onto the torus.

In Table \ref{tab:torusproj}, we present the numerical error for the reconstructed nodal points as the first example. We observe error bounds of order $\mathcal{O}(h^{k+1})$ when using $k$-th order polynomials. Similarly, the error bounds of $\mathcal{O}(h^{k+2})$ for the nodal pairs projected onto the parameter domain align with Proposition \ref{prop:nodal_accu}. Unlike the unit sphere case, there seems to be no geometrical superconvergence for even polynomials, which differs from the previous example.

	\begin{table}[htb!]
		\centering
		\footnotesize
		\caption{Errors of patch reconstruction from point cloud on torus}
		\label{tab:torusproj}
		\resizebox{\textwidth}{!}{
			\begin{tabular}{|c|c|c|c|c|c|c|c|c|c|c|}
				\hline 
				$N_v$&$e^1$&Order&$e^2$&Order&$e^3$&Order&$e^4$&Order\\ 
				\hline
				200&4.67e-02&--&1.66e-03&--&3.17e-04&--&6.63e-05&--\\ \hline
				800&1.44e-02&1.69&7.79e-04&1.09&8.62e-05&1.88&9.45e-06&2.81\\ \hline
				3200&3.39e-03&2.09&5.81e-05&3.75&2.54e-06&5.08&1.98e-07&5.58\\ \hline
				12800&9.00e-04&1.91&5.81e-06&3.32&1.15e-07&4.47&3.92e-09&5.66\\ \hline
				51200&2.48e-04&1.86&1.09e-06&2.42&7.06e-09&4.02&1.14e-10&5.11\\ \hline
		\end{tabular}}
		\caption{Error component in the parametric domain for point cloud on torus}
		\label{tab:torus_tang}
		\resizebox{\textwidth}{!}{
			\begin{tabular}{|c|c|c|c|c|c|c|c|c|c|c|}
				\hline 
				$N_v$&$e_t^1$&Order&$e_t^2$&Order&$e_t^3$&Order&$e_t^4$&Order\\ 
				\hline	
				200&2.92e-03&--&1.37e-03&--&3.48e-04&--&7.41e-05&--\\ \hline
				800&1.67e-02&-2.52&1.85e-03&-0.43&2.22e-04&0.65&4.04e-05&0.88\\ \hline
				3200&2.46e-03&2.77&1.25e-04&3.89&7.83e-06&4.82&6.07e-07&6.06\\ \hline
				12800&3.17e-04&2.95&8.34e-06&3.90&2.46e-07&4.99&1.32e-08&5.53\\ \hline
				51200&4.34e-05&2.87&6.82e-07&3.61&8.29e-09&4.89&2.42e-10&5.76\\ \hline
		\end{tabular}}
	\end{table}

	\subsection{PDEs on patches reconstructed from a point cloud of the unit sphere}
	\label{ssec:sphere}
In this test, we first give results for the Laplace-Beltrami equation  \eqref{eq:Lap_Bel}.  The function $f(x)$ on the right hand side is chosen to fit the exact solution $u(x) = \sin(x_1)\sin(x_2)\sin(x_3)$. We choose the finite element degree $l$ and the geometric degree $k$ to be equal to some $p$ for $p \in {1, 2, 3, 4}$. According to Theorem \ref{thm:convergence}, we would expect $\mathcal{O}(\max{h^{l+1},h^{k+1}})$ for $L_2$ error and $\mathcal{O}(\max{h^{l},h^{k}})$ convergence for $H^1$ error for sufficiently small $h$. To avoid confusion, we denote the $L^2$ error (or $H^1$ error) with $p$th order degree as $e^p$ (or $De^p$). The numerical results are plotted in Figure \ref{fig:sphere_fem}, where optimal convergence rates can be observed as expected.
	\begin{figure}[!h]
		\centering
		{\includegraphics[width=0.47\textwidth]{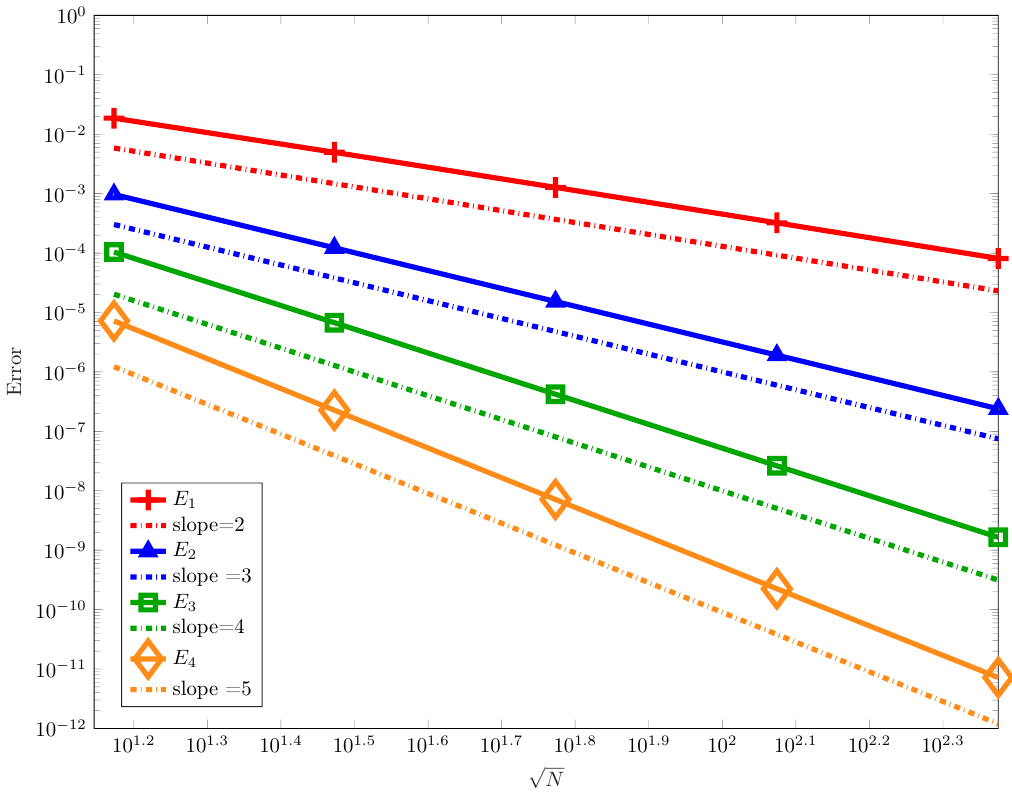}}
		\vspace{0.1in}
		{\includegraphics[width=0.47\textwidth]{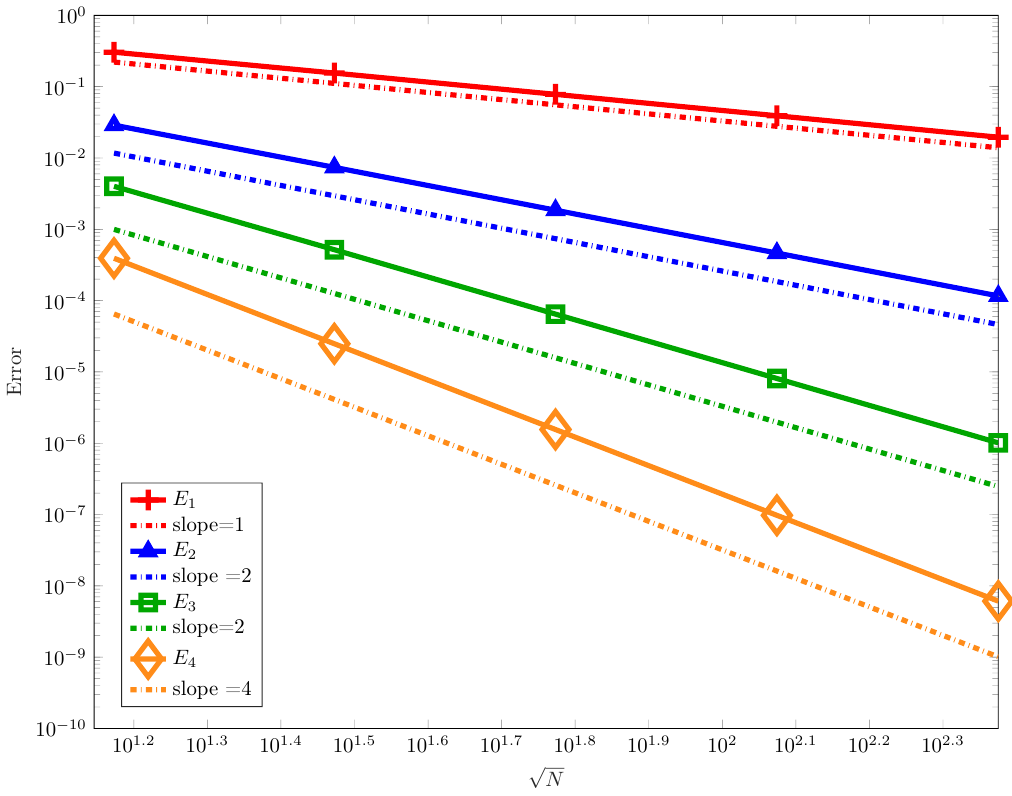}}
		\caption{(Color online) Numerical results of Laplace-Beltrami equation on the unit sphere. Left (a): $L_2$ error; Right (b): $H_1$ error.}
		\label{fig:sphere_fem}
	\end{figure}
	
We proceed by testing the DG discretization of the Laplace-Beltrami eigenvalue problem \eqref{eq:eigen_Lap_Bel} using the same surface patches. For the Laplace-Beltrami operator on the unit sphere, both eigenvalues and eigenfunctions are explicitly known. In these experiments, our focus lies on computing the first nonzero eigenvalue $\lambda_2=\lambda_3=\lambda_4 = 2$. These eigenfunctions, namely $u_i = x_{i-1}$ for $i = 2, 3, 4$, are known as spherical harmonics \cite{AbSt1964}.

	As per Theorem \ref{thm:eigen_result}, the eigenvalue $\lambda_i$ achieves optimal convergence at a rate of $\mathcal{O}(\max\set{h^{2l},h^{k+1}})$. To exemplify, for $l=2$, we require $k=3$ to observe this optimal convergence. The numerical outcomes corresponding to these parameters are presented in Table \ref{tab:sphere_eigen_k2l3}, confirming the anticipated convergence rates.

	\begin{table}[!htb]
		\centering
		\caption{Numerical result of Laplace-Beltrami eigenvalue problem on the unit sphere point cloud when $l=2$ and $k= 3$}
		\label{tab:sphere_eigen_k2l3}
		\resizebox{\textwidth}{!}{
			\begin{tabular}{|c|c|c|c|c|c|c|c|}
				\hline 
				i&$N_v$ & $|\lambda_i-\lambda_{i,h}|$ & Order & $\|u_i-u_{i,h}\|_{0,\Gamma_h}$&Order & $|u_i-u_{i,h}|_{1,\Gamma_h}$&Order
				\\ \hline \hline
				2 & 222 & 3.99e-05 & -- & 4.20e-04 & -- & 1.07e-02 & -- \\ \hline
				2 & 882 & 2.82e-06 & 3.84 & 5.26e-05 & 3.01 & 2.72e-03 & 1.99 \\ \hline
				2 & 3522 & 1.81e-07 & 3.97 & 6.59e-06 & 3.00 & 6.83e-04 & 1.99 \\ \hline
				2 & 14082 & 1.16e-08 & 3.97 & 8.27e-07 & 3.00 & 1.72e-04 & 1.99 \\ \hline
				3 & 222 & 5.18e-05 & -- & 4.28e-04 & -- & 1.09e-02 & -- \\ \hline
				3 & 882 & 3.50e-06 & 3.91 & 5.37e-05 & 3.01 & 2.76e-03 & 1.99 \\ \hline
				3 & 3522 & 2.25e-07 & 3.97 & 6.74e-06 & 3.00 & 6.94e-04 & 1.99 \\ \hline
				3 & 14082 & 1.43e-08 & 3.98 & 8.45e-07 & 3.00 & 1.74e-04 & 1.99 \\ \hline
				4 & 222 & 6.52e-05 & -- & 4.45e-04 & -- & 1.12e-02 & -- \\ \hline
				4 & 882 & 4.38e-06 & 3.92 & 5.58e-05 & 3.01 & 2.85e-03 & 1.99 \\ \hline
				4 & 3522 & 2.79e-07 & 3.97 & 6.98e-06 & 3.00 & 7.17e-04 & 1.99 \\ \hline
				4 & 14082 & 1.77e-08 & 3.98 & 8.75e-07 & 3.00 & 1.80e-04 & 1.99 \\ \hline
		\end{tabular}}
	\end{table}

	\begin{table}[!htb]
		\centering
		\caption{Numerical result of Laplace-Beltrami eigenvalue problem on the unit sphere point cloud when $l=3$ and $k= 4$}
		\label{tab:sphere_eigen_k3l4}
		
		\resizebox{\textwidth}{!}{
			\begin{tabular}{|c|c|c|c|c|c|c|c|}
				\hline 
				i&$N_v$ & $|\lambda_i-\lambda_{i,h}|$ & Order & $\|u_i-u_{i,h}\|_{0,\Gamma_h}$&Order & $|u_i-u_{i,h}|_{1,\Gamma_h}$&Order
				\\ \hline\hline
				2 & 222 & 1.46e-08 & -- & 2.00e-05 & -- & 7.08e-04 & -- \\ \hline
				2 & 882 & 8.98e-10 & 4.04 & 1.33e-06 & 3.92 & 9.35e-05 & 2.93 \\ \hline
				2 & 3522 & 1.59e-11 & 5.82 & 8.01e-08 & 4.06 & 1.14e-05 & 3.04 \\ \hline
				2 & 14082 & 1.69e-11 & -0.08 & 4.59e-09 & 4.13 & 1.34e-06 & 3.09 \\ \hline
				3 & 222 & 3.15e-08 & -- & 1.97e-05 & -- & 6.95e-04 & -- \\ \hline
				3 & 882 & 1.21e-09 & 4.72 & 1.19e-06 & 4.07 & 8.52e-05 & 3.04 \\ \hline
				3 & 3522 & 1.73e-11 & 6.14 & 7.84e-08 & 3.93 & 1.11e-05 & 2.94 \\ \hline
				3 & 14082 & 2.10e-11 & -0.28 & 4.72e-09 & 4.06 & 1.36e-06 & 3.03 \\ \hline
				4 & 222 & 4.84e-08 & -- & 1.77e-05 & -- & 6.39e-04 & -- \\ \hline
				4 & 882 & 1.48e-09 & 5.06 & 1.22e-06 & 3.87 & 8.75e-05 & 2.88 \\ \hline
				4 & 3522 & 1.88e-11 & 6.31 & 7.12e-08 & 4.11 & 1.04e-05 & 3.08 \\ \hline
				4 & 14082 & 3.30e-11 & -0.82 & 4.97e-09 & 3.84 & 1.41e-06 & 2.88 \\ \hline 
		\end{tabular}}
	\end{table}

	\begin{table}[!htb]
		\centering
		\caption{Numerical result of Laplace-Beltrami eigenvalue problem on the unit sphere when $k=2$ and $l = 2$}
		\label{tab:sphere_eigen_k2l2}
		\resizebox{\textwidth}{!}{
			\begin{tabular}{|c|c|c|c|c|c|c|c|}
				\hline 
				i&$N_v$ & $|\lambda_i-\lambda_{i,h}|$ & Order & $\|u_i-u_{i,h}\|_{0,\Gamma_h}$&Order & $|u_i-u_{i,h}|_{1,\Gamma_h}$&Order
				\\ \hline \hline
				2 & 222 & 1.09e-04 & -- & 2.12e-05 & -- & 1.21e-03 & -- \\ \hline
				2 & 882 & 7.65e-06 & 3.86 & 3.58e-06 & 2.58 & 4.83e-04 & 1.33 \\ \hline
				2 & 3522 & 4.67e-07 & 4.04 & 2.16e-07 & 4.06 & 6.08e-05 & 3.00 \\ \hline
				2 & 14082 & 2.92e-08 & 4.00 & 1.34e-08 & 4.01 & 7.62e-06 & 3.00 \\ \hline
				3 & 222 & 1.22e-04 & -- & 2.42e-05 & -- & 1.20e-03 & -- \\ \hline
				3 & 882 & 8.28e-06 & 3.90 & 3.86e-06 & 2.66 & 4.90e-04 & 1.30 \\ \hline
				3 & 3522 & 5.14e-07 & 4.02 & 2.35e-07 & 4.04 & 6.16e-05 & 3.00 \\ \hline
				3 & 14082 & 3.22e-08 & 4.00 & 1.46e-08 & 4.00 & 7.72e-06 & 3.00 \\ \hline
				4 & 222 & 1.44e-04 & -- & 2.77e-05 & -- & 1.18e-03 & -- \\ \hline
				4 & 882 & 9.94e-06 & 3.88 & 4.58e-06 & 2.61 & 4.99e-04 & 1.25 \\ \hline
				4 & 3522 & 6.11e-07 & 4.03 & 2.74e-07 & 4.07 & 6.27e-05 & 3.00 \\ \hline
				4 & 14082 & 3.81e-08 & 4.00 & 1.69e-08 & 4.02 & 7.86e-06 & 3.00 \\  \hline
		\end{tabular}}
	\end{table}

	We also explore another choice of $k$ and $l$. The numerical outcomes for $l=3$ and $k=4$ are detailed in Table \ref{tab:sphere_eigen_k3l4}. According to Theorem \ref{thm:eigen_result}, we would expect a convergence order of $\mathcal{O}(h^5)$ for the eigenvalues. However, remarkably, we observe a convergence of $\mathcal{O}(h^6)$ for the eigenvalue, attributed to the superconvergence observed in the geometric approximation as previously reported.
To further illustrate this, we examine the scenario where $k=l=2$. The numerical results are presented in Table \ref{tab:sphere_eigen_k2l2}. Once again, we observe improved convergence rates, supporting the error bounds of $\mathcal{O}(\max\set{h^{2l},h^{k+2}})$ for even $k$.
Surprisingly, a $\mathcal{O}(h^{k+2})$ (or $\mathcal{O}(h^{k+1})$) superconvergence is also noted in the approximation of eigenfunctions in $L^2$ norm (or $H^1$ norm).
Although, as highlighted in Remark \ref{rem:eigen_sharp}, a sharper estimation for eigenvalue approximation is not generally available given the current geometric approximation conditions, it is worth exploring specific geometric conditions which might lead to such superconvergence. This could be an interesting topic for future research.

	\subsection{PDEs on patches reconstructed from a point cloud of a torus}
	\label{ssec:torus}
	
	Numerical results of the DG method on the patches reconstructed from the torus point cloud are collected here. Similar to the last example, we start with the Laplace-Beltrami equation \eqref{eq:Lap_Bel}. The cooked up exact solution is $u(x) = x_1-x_2$, and then the right-hand side function $f(x)$ can be simply computed. We set $k = l = p$ for some $p=1,\ldots,4$. The numerical errors are documented in Figure \ref{fig:torus_fem}. From there, we can spot the optimal convergence rates for both $L^2$ and $H^1$ errors.
	
	\begin{figure}[!h]
		\centering
		{\includegraphics[width=0.42\textwidth]{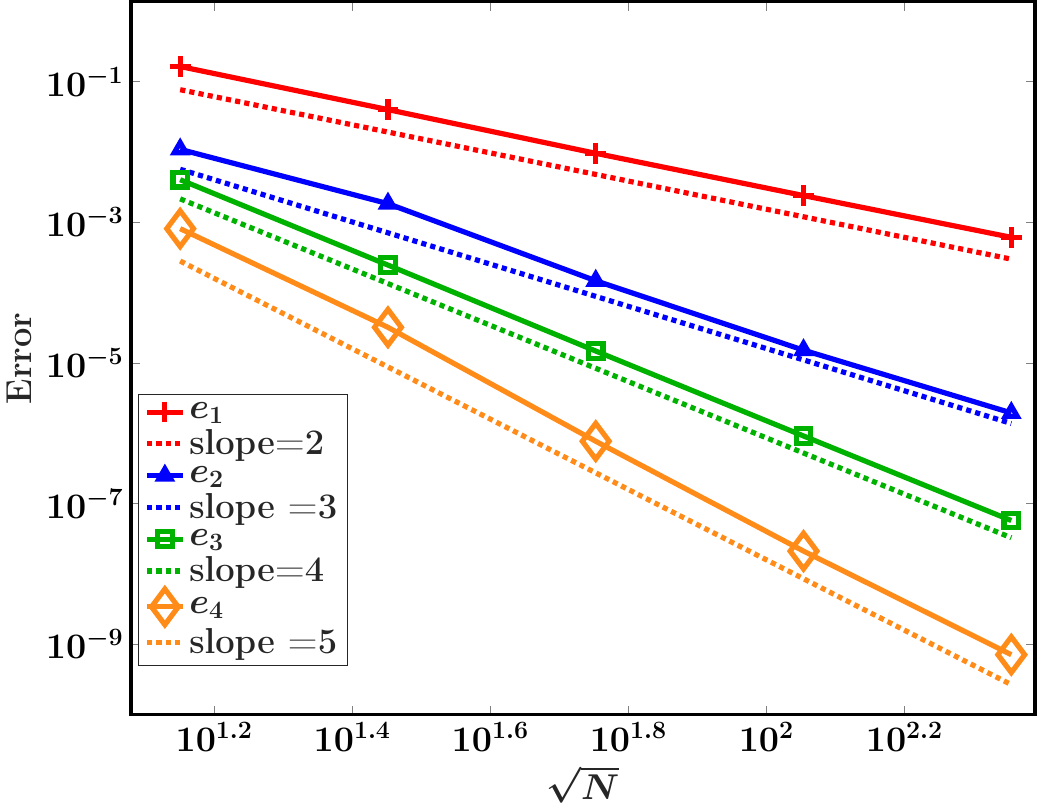}}
		\vspace{0.1in}
		{\includegraphics[width=0.42\textwidth]{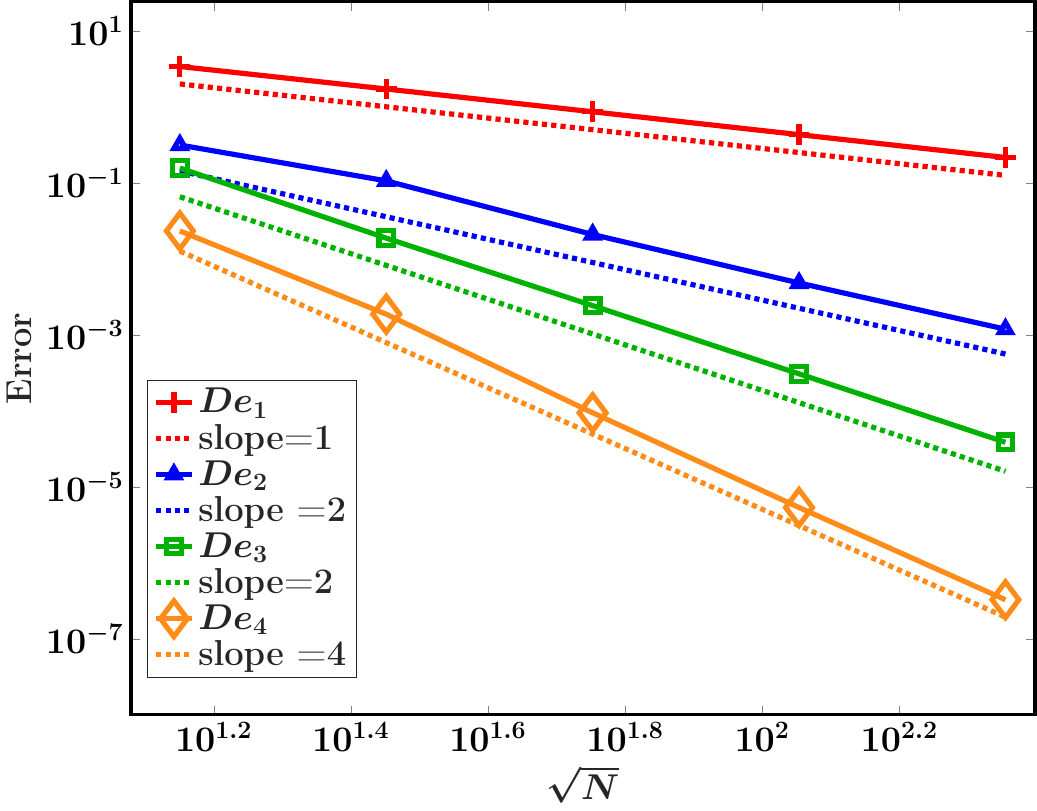}}
		\caption{(Color online) Numerical results of Laplace-Beltrami equation on torus point could. Left (a): $L_2$ error; Right (b): $H_1$ error.}
		\label{fig:torus_fem}
	\end{figure}

	\begin{figure}[!h]
		\centering
		{\includegraphics[width=0.42\textwidth]{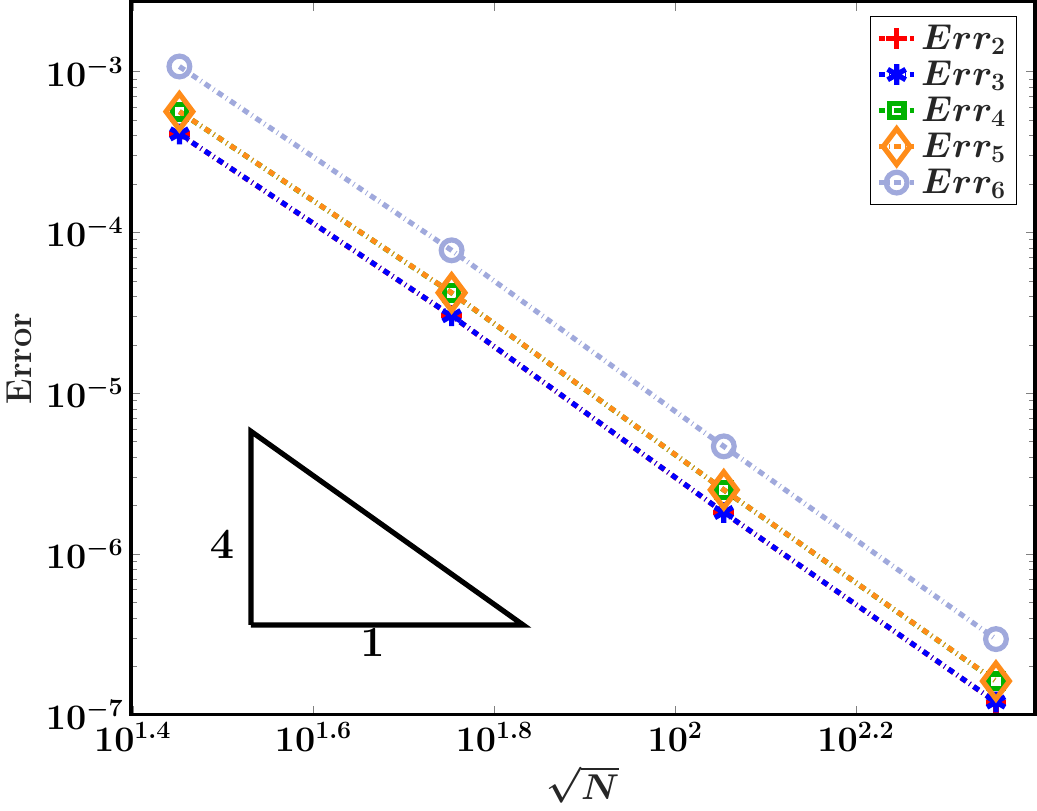}}
		\vspace{0.1in}
		{\includegraphics[width=0.42\textwidth]{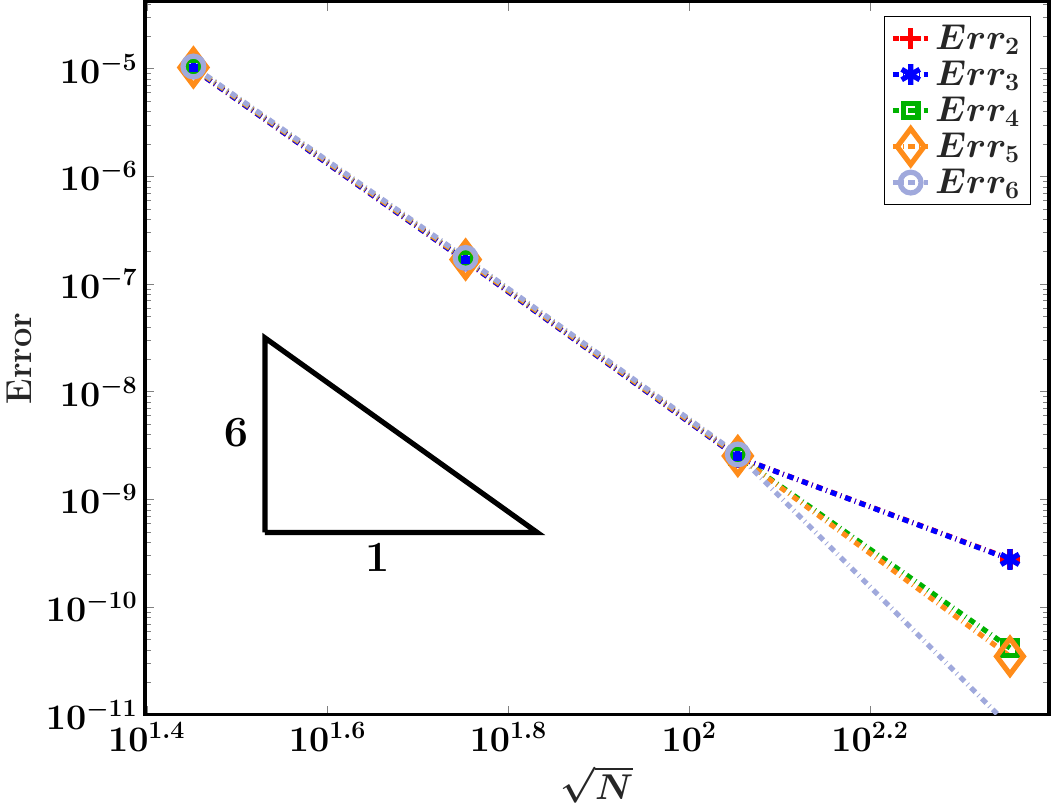}}
		\caption{(Color online) Numerical results of Laplace-Beltrami eigenvalue problem on torus point cloud. Left (a): $k=l=2$; Right (b): $k=l=4$.}
		\label{fig:torus_eigen}
	\end{figure}
	
	For the comparison of eigenvalues for the Laplace-Beltrami operator, it is different to the unit sphere case, as the exact eigenvalues of the Laplace-Beltrami operator on a torus are not known to us. 
To study the convergence rates, we use the following quotient
\begin{equation}
    Err_i = \frac{|\lambda_{i, h_{j+1}}-\lambda_{i, h_{j}}|}
    {\lambda_{i, h_{j+1}}},
\end{equation}
where $h_{j}$ is the mesh-size on the $j$th level of meshes. The numerical results are reported in Figure \ref{fig:torus_eigen} Left (a) for $k=l=2$. We compare the first five nontrivial eigenvalues.
What stands out in this figure is that $4$th order convergence is shown up, which seems indicate $\mathcal{O}(\max\set{h^{2l},h^{k+2}})$ bounds for eigenvalue approximation when $k$ is even.  
In Figure \ref{fig:torus_eigen} Right (b), we display the eigenvalue approximation error when $k=l=4$. Superconvergence of the geometric approximations are presented again in both the even-order polynomial cases.
This kind of superconvergence results may deserve further investigation.

\section{Conclusion}\label{sec:con}
In this paper, we have proposed a high-order DG method for numerical solutions of PDEs on point clouds. Our particular focus is on the numerical analysis of this method since existing results in the literature are not applicable in such a setting. To do so, we introduced a new error analysis framework for geometric approximation without exact geometric information and global continuity assumptions. This framework is not only suitable for the method proposed in this paper but also able to accommodate existing methods like surface Galerkin methods (including both continuous and discontinuous finite elements) in the literature, even though the numerical tests are implemented for a specific DG method only.

\subsection*{Acknowledgment}
The authors acknowledge the anonymous reviewer for very careful reading and suggestions which improved the presentation of the paper.
The authors thank Prof. Buyang Li (Hong Kong PolyU) for valuable comments which improved the paper as well.
Guozhi Dong was supported by the NSFC Grants No. 12001194, No. 12471402, and the NSF grant of Hunan Province No. 2024JJ5413.
Hailong Guo was partially supported by the Andrew Sisson Fund, Dyason Fellowship, and the Faculty Science Researcher
Development Grant of the University of Melbourne. Zuoqiang Shi was supported by the NSFC Grant 12071244.
{	\appendix
	\section{Proof of Lemma \ref{lem:aux_appr}}
	\label{app:approx}
	\begin{proof}
		We consider first the case that $p_h^{k,j}$ is produced from interpolation by the polynomial graph. Let us denote by $p_h^{*k,j}$. This is when the number $m$ for calculating \eqref{eq:local_patch_appro} in Algorithm \ref{alg:point_estimate} equals to the number of unknown coefficients for determining the polynomial, let us denote it by $m*$.
		Then the above estimate is a direct result from approximation theory. When $ m>m*$, we use the property that $p_h^{k,j}$ is a minimizer of \eqref{eq:local_patch_appro}. It is evident that
		\[ \sum_{i=1}^m\abs{p^j(r_i^j)-p_h^{k,j}(r_i^j)}^2\leq \sum_{i=1}^m\abs{p^j(r_i^j)-p_h^{*k,j}(r_i^j)}^2, \]
		since $p^j(r_i^j)=\zeta_i^j$ for $i=1,\cdots,m$.
		We have $\abs{p^j(r_i^j)-p_h^{*k,j}(r_i^j)}\leq Ch^{k+1}$ for all $i=1,\cdots,m$, which indicates that
		\[ \abs{p^j(r_i^j)-p_h^{k,j}(r_i^j)}\leq C'h^{k+1} \text{ for all } \; i=1,\cdots,m.\]
		We have also that $p_h^{*k,j}$ interpolates $p^j$.
		As $p_h^{*k,j}$ and $p_h^{k,j}$ both be $k^{th}$ order local polynomials. Then we can pick $m*$ points out of the $m$ points, and to have $p_h^{k,j}=\sum_i^{m*} p_h^{k,j}(r_i^j)\phi_i$, and $p_h^{*k,j}=\sum_i^{m*} p^{j}(r_i^j)\phi_i$, where $(\phi_i)_{i=1}^{m*}$ are Lagrange polynomial bases. Since $\abs{p_h^{*k,j}(r_i^j)-p_h^{*k,j}(r_i^j)}\leq Ch^{k+1}$, and $\abs{\phi_i}\leq C$ due to our assumption on the distribution of the selected reconstruction points. We have then  for all $r\in \Omega_h^j$
		\[ \abs{p_h^{*k,j}(r) - p_h^{k,j}(r)}
		\leq \sum_{i=1}^{m*}\abs{\phi_i(r)}\abs{p_h^{*k,j}(r_i^j) -p_h^{k,j}(r_i^j)}\leq Ch^{k+1} .\]
		This gives us the estimate using the triangle inequality 
		\[\abs{p^j(r)-p_h^{k,j}(r)}\leq \abs{p^j(r)-p_h^{*k,j}(r)}+\abs{p_h^{*k,j}(r) -  p_h^{k,j}(r)} =\mathcal{O}( h^{k+1})\quad \text{ for all } r\in \Omega_h^j.\]
		Finally, standard argument leads to the estimate on the derivatives (gradient):
		\[\abs{\nabla p^j(r)- \nabla p_h^{k,j}(r)} =\mathcal{O}( h^{k}) \quad \text{ for all } r\in \Omega_h^j.\]
	\end{proof}
	
	\section{Details on the estimate for each terms in \eqref{eq:R_error}}
	\label{app:DG_bilinear}
	Note that the results of Theorem \ref{thm:geo_error} are crucial for the estimates here.
	For the first term in \eqref{eq:R_error}, we have:
	\begin{equation*}
	\begin{aligned}
	&	\sum_j  \int_{\Omega_h^j} (\nabla \bar{u}^{k,l}_h)^\top \left((g^k_h)^{-1}-g^{-1}\right) \nabla \bar{v}_h \sqrt{\abs{g^k_h}} + (\nabla \bar{u}^{k,l}_h)^\top g^{-1}\nabla \bar{v}_h \left(\sqrt{\abs{g^k_h}} - \sqrt{\abs{g}}\right) dA   \\
	&\leq \norm{g^k_h\left((g^k_h)^{-1}-g^{-1}\right) }_{L^\infty} \abs{\sum_j \int_{\Omega_h^j} (\nabla \bar{u}^{k,l}_h)^\top	(g^k_h)^{-1}\nabla \bar{v}_h \sqrt{\abs{g^k_h}} dA}+\\
	&\;\;\;\; \norm{\left(\sqrt{\abs{g^k_h}} - \sqrt{\abs{g}}\right)\sqrt{\abs{g}}^{-1}}_{L^\infty} \abs{\sum_j \int_{\Omega_h^j} (\nabla \bar{u}^{k,l}_h)^\top	g^{-1}\nabla \bar{v}_h \sqrt{\abs{g}} dA}\\
	&\leq Ch^{k+1}\left(\norm{u_h^{k,l}}_{H^1(\hat{\Gamma}_h^{k})}\norm{(T_h^k)^{-1}v_h}_{H^1(\hat{\Gamma}_h^{k})} +\norm{T_h^ku_h^{k,l}}_{H^1(\Gamma_h)}\norm{v_h}_{H^1(\Gamma_h)} \right)\\
	&=\mathcal{O}(h^{k+1})\norm{T_h^ku_h^{k,l}}_{V_h^l}\norm{v_h}_{V_h^l} ,
	\end{aligned}
	\end{equation*}
	where the last inequality is due to the equivalence of the norms of $H^1(\hat{\Gamma}_h^{k})$ and $H^1(\Gamma_h)$ and the embedding that $H^1(\Gamma_h)\subset V_h^l$.
	For the second term, we have
	\begin{equation*}
	\begin{aligned}
	&-  \sum_{\bar{e}_h\in \bar{\mathcal{E}}_h}\int_{ \bar{e}_h} \{(\nabla \bar{u}^{k,l}_h)^\top \left( (g^k_h)^{-1}  (\partial \pi_h^k)^\top n_h - g^{-1}(\partial \pi )^\top n\right) l_{g^k_h}\}[\bar{v}_h  ]dE \\
	\leq & \norm{\frac{l_{g_h^k}}{l_g}}_{L^\infty} \norm{\left( (g^k_h)^{-1}  (\partial \pi_h^k)^\top n_h - g^{-1}(\partial \pi )^\top n\right)}_{L^\infty}
	\abs{\sum_{\bar{e}_h\in \bar{\mathcal{E}}_h}\int_{ \bar{e}_h} \{\abs{(\nabla \bar{u}^{k,l}_h)^\top} \}[\bar{v}_h ]l_{g}dE }\\
	\leq &Ch^{k+1} \norm{\sqrt{\beta_h}^{-1}\nabla_g T_h^ku_h^{k,l}}_{L^2(\partial \Gamma_h)} \norm{\sqrt{\beta_h} v_h}_{L^2(\partial \Gamma_h)}\\
	=& \mathcal{O}(h^{k+1}) \norm{T_h^ku_h^{k,l}}_{H^1(\Gamma_h)}\norm{ v_h}_*=\mathcal{O}(h^{k+1}) \norm{T_h^ku_h^{k,l}}_{V_h^l}\norm{ v_h}_{V_h^l}.
	\end{aligned}
	\end{equation*}
	while the second last relation we used Cauchy--Schwarz inequality, Lemma \ref{lem:DG_trace}, the norm defined in \eqref{eq:trace_norm} and the relation that $\beta_h=\frac{\beta}{h}$. Similarly, for the third term
	\begin{equation*}
	\begin{aligned}
	&-  \sum_{\bar{e}_h\in \bar{\mathcal{E}}_h}\int_{ \bar{e}_h} \{(\nabla \bar{v}_h)^\top \left( (g^k_h)^{-1}  (\partial \pi_h^k)^\top n_h - g^{-1}(\partial \pi )^\top n\right) l_{g^k_h} \}[\bar{u}^{k,l}_h ]dE \\
	= &  \mathcal{O}(h^{k+1})  \norm{v_h}_{H^1(\Gamma_h)}\norm{ T_h^ku_h^{k,l}}_* =\mathcal{O}(h^{k+1})\norm{ v_h}_{V_h^l} \norm{T_h^ku_h^{k,l}}_{V_h^l}.
	\end{aligned}
	\end{equation*}
	The fourth and the fifth term are also quite similar, while we only have to take care of the geometric error contributed by $\norm{l_{g_h^k}-l_g}_{L^\infty}$, then we will end up with the same estimate of the second and the third ones up to different constants.
	The rest is quite trivial and we omit the details.	
}

\end{document}